\documentclass[a4paper]{amsart}
\usepackage[utf8]{inputenc}
\usepackage{amsfonts}
\usepackage{amsmath,latexsym,amssymb,amsfonts, amsthm}
\usepackage{esint}
\usepackage{cite}
\usepackage{graphicx}
\usepackage{amscd}
\usepackage{color}
\usepackage{bm}             
\usepackage{enumerate}
\usepackage{aliascnt}
\usepackage[dvips]{epsfig}
\usepackage{psfrag}
\usepackage{enumitem}
\usepackage{epsfig}

\usepackage[
  hmarginratio={1:1},     
  vmarginratio={1:1},     
  textwidth=16cm,        
  textheight=21cm,
  heightrounded,          
]{geometry}

\usepackage{graphicx,color}
\usepackage[colorlinks]{hyperref}
\usepackage{cleveref}
\hypersetup{linkcolor=blue,citecolor=blue,filecolor=black,urlcolor=blue}
\usepackage{mathtools}


\renewcommand{\b}{\beta}

\newtheorem{theorem}{Theorem}[section]
\crefname{theorem}{theorem}{theorems}
\Crefname{theorem}{Theorem}{Theorems}

\newaliascnt{proposition}{theorem}
\newtheorem{proposition}[proposition]{Proposition}
\aliascntresetthe{proposition}
\crefname{proposition}{proposition}{propositions}
\Crefname{proposition}{Proposition}{Propositions}

\newaliascnt{corollary}{theorem}

\aliascntresetthe{corollary}
\crefname{corollary}{corollary}{corollaries}
\Crefname{corollary}{Corollary}{Corollaries}

\newaliascnt{lemma}{theorem}
\newtheorem{lemma}[lemma]{Lemma}
\aliascntresetthe{lemma}
\crefname{lemma}{lemma}{lemmas}
\Crefname{lemma}{Lemma}{Lemmas}

\theoremstyle{definition}
\newaliascnt{remark}{theorem}
\newtheorem{remark}[remark]{Remark}
\aliascntresetthe{remark}
\theoremstyle{definition}
\crefname{remark}{remark}{remarks}
\Crefname{remark}{Remark}{Remarks}

\newaliascnt{example}{theorem}

\aliascntresetthe{example}
\theoremstyle{definition}
\crefname{example}{example}{examples}
\Crefname{example}{Example}{Examples}

\newaliascnt{definition}{theorem}

\aliascntresetthe{definition}
\theoremstyle{definition}
\crefname{definition}{definition}{definitions}
\Crefname{definition}{Definition}{Definitions}

\numberwithin{theorem}{section}

\numberwithin{equation}{section}

\newcommand{\beq}{\begin{equation}}
\newcommand{\eeq}{\end{equation}}

\newcommand{\R}{\mathbb{R}}
\newcommand{\N}{\mathbb{N}}

\newcommand{\mf}[1]{\mathbf{#1}}
\newcommand{\bs}[1]{\boldsymbol{#1}}

\newcommand{\pa}{\partial}
\renewcommand{\S}{\mathbb{S}}

\newcommand{\cH}{{\mathcal H}}

\newcommand{\cM}{{\mathcal M}}   
\newcommand{\cN}{{\mathcal N}}

\newcommand{\umin}{\mf u_{\mathrm{min}}}
\newcommand{\uminarg}[1]{u_{\mathrm{min},#1}}
\newcommand{\intRn}{\int_{\R^d}}
\newcommand{\norm}[1]{\|#1\|}
\newcommand{\oddpower}[2]{|#1|^{#2}#1}
\newcommand{\HoneradK}{H^1_{\mathrm{rad}}(\R^d,\R^K)}
\newcommand{\HoneK}{H^1(\R^d,\R^K)}

\DeclareMathOperator*{\argmin}{arg\,min}

\newcommand{\supp}{{\mathrm{supp}}}

\newcommand{\eps}{\varepsilon}

\usepackage{accents}
\newcommand{\ubar}[1]{\underaccent{\bar}{#1}}

\renewcommand{\epsilon}{\varepsilon}

%
%
%
\author[L. Giaretto and N. Soave]{Lorenzo Giaretto and Nicola Soave}\thanks{}
\address{Lorenzo Giaretto and Nicola Soave\newline \indent
 Dipartimento di Matematica ``Giuseppe Peano'', Universit\`a di Torino, \newline \indent
Via Carlo Alberto 10,
10123 Torino, Italy}
\email{lorenzo.giaretto@unito.it, nicola.soave@unito.it}

\title[On least energy solutions for a NLS system with $K$-wise interaction]{On least energy solutions for a nonlinear Schr\"odinger system with $K$-wise interaction}
\keywords{Nonlinear Schr\"odinger systems; $K$-wise interaction; Nehari manifold; strong competition; ground states}
\subjclass[2020]{35R35; 35B25 (35J61; 35J47))}
\thanks{N.S. is partially supported by the PRIN Project no. 2022R537CS ``Nodal Optimization, NOnlinear elliptic equations, NOnlocal geometric problems, with a focus on regularity ($NO^3$)", funded by European Union - Next Generation EU within the PRIN 2022 program (D.D. 104 - 02$/$02$/$2022 Ministero dell\'Universit\`a e della Ricerca, Italy). Both authors are affiliated to the INDAM - GNAMPA group.\\
Declarations of interest: none. \\
Data availability: Data sharing not applicable to this article as no datasets were generated or analysed during the current study.}


%
%
%

\begin{document}

\begin{abstract}
In this paper we establish existence and properties of minimal energy solutions for the weakly coupled system
  \[
  \begin{cases}
     -\Delta u_i + \lambda_i u_i = \mu_i|u_i|^{Kq-2}u_i + \b|u_i|^{q-2}u_i\prod_{j\neq i}|u_j|^q         \quad\text{in }\R^d \\
     u_i \in H^1(\R^d), \end{cases}\qquad i=1,\dots, K, 
  \]
characterized by $K$-wise interaction (namely the interaction term involves the product of all the components). We consider both attractive ($\beta>0$) and repulsive cases ($\beta<0$), and we give sufficient conditions on $\beta$ in order to have least energy fully non-trivial solutions, if necessary under a radial constraint. We also study the asymptotic behavior of least energy fully non-trivial radial solutions in the limit of strong competition $\beta \to -\infty$, showing partial segregation phenomena which differ substantially from those arising in pairwise interaction models.
\end{abstract}

\maketitle

\section{introduction and main results}

In this paper we are interested in fully non-trivial minimal energy solutions of the system
  \begin{equation}\label{eq:general problem}
  \begin{cases}
     -\Delta u_i + \lambda_i u_i = \mu_i|u_i|^{Kq-2}u_i + \b|u_i|^{q-2}u_i\prod_{j\neq i}|u_j|^q         \quad\text{in }\R^d \\
     u_i \in H^1(\R^d), \end{cases}\qquad i=1,\dots, K, 
  \end{equation}
  for some integers $d\geq 2$ and $K\geq 3$, and real numbers $\lambda_i>0$, $\mu_i>0$ and $\b\in \R$. Since we look for energy solutions, we assume that the exponent $q$ satisfies
  $1\leq q<2^*/K=2d/(K(d-2))$ when $d\geq 3$ and $1\leq q<\infty$ when $d=2$. 
    
The case $K=2$ has been the subject of extensive research in the last twenty years (especially for $q=2$) in view of its physical relevance in nonlinear optics, in particular in the study of propagation of beams with multiple states in Kerr-like photorefractive media; several results regarding existence, non-existence, multiplicity, asymptotic behavior of the solutions are now available, also in the critical case. A comprehensive list is beyond the aim of this paper. We refer the interested reader to \cite{AmCo, CheZou, KoPiVa, Maia2006, Mandel2014cooperative, Mandel2014repulsive, LinWei1, LiuWan, PiSo, PiSoTa, Soa, SoaveTavares2016, TerVer, WeiWet} and references therein. Differently from the previous contributions, where the interaction takes place pairwise (namely the interaction term is
\[
\b|u_i|^{q-2} u_i |u_j|^q, \quad \text{or} \quad \b \sum_{j \neq i} |u_i|^{q-2} u_i |u_j|^q \quad \text{for systems with more than $2$ components}),
\]
we consider a \emph{$K$-wise interaction term} involving the product of (suitable powers of) all the components in \eqref{eq:general problem}.

In order to understand the structure of the system, it is convenient to think of $\mf{u}=(u_1,\dots,u_K)$ as different densities interacting in $\R^d$. The fact that $\mu_i>0$ reflects the fact that particles of the same density are related by an attractive interaction. The term $\b|u_i|^{q-2}u_i\prod_{j\neq i}|u_j|^q$ accounts for the simultaneous interaction among all $K$ densities, and the sign of the coefficient $\beta$ determines whether such interaction is attractive ($\beta > 0$) or repulsive ($\beta < 0$). The larger the values of $|\mu_i|$ and $|\beta|$, the stronger the corresponding interactions. In particular, we are interested in the limit $\beta \to -\infty$, known as the \emph{strong competition limit}. System \eqref{eq:general problem} can be seen as a generalization of the $K = 2$ case, in which $K$ different densities interact simultaneously. From this perspective, it is worth mentioning that systems with multiple (i.e., beyond binary) interaction terms naturally arise in various physical contexts; see, for instance, \cite{CoCoOh, LQZ24, Pet14, Po} and references therein.

In this study, inspired in particular by \cite{AmCo, Maia2006, Mandel2014cooperative, Mandel2014repulsive, Sir}, we provide explicit ranges for $\b$, depending on the value of the other data $\lambda_i, \mu_i, q, K, d$, in which least energy fully non-trivial solutions exist or not. Furthermore, we describe the asymptotic behavior of least energy fully non-trivial solutions in the limit of strong competition.

Before stating our main results, it is convenient to introduce some notation and review more in details some of the aforementioned results. Solutions to \eqref{eq:general problem} are critical points of the Euler-Lagrange energy functional (or, more appropriately, action functional) $I_\beta$ associated with \eqref{eq:general problem}, defined on $H^1(\R^d,\R^K)$ as
\[
I_\b(\mf{u}):= \frac{1}{2} \int_{\R^d}\sum_{i=1}^K\left( |\nabla u_i|^2 + \lambda_i |u_i|^2\right) - \frac{1}{Kq} \int_{\R^d} \sum_{i=1}^K  \mu_i|u_i|^{Kq} - \frac{\b}{q}\intRn  \prod_{i=1}^K |u_i|^q.
\]
\emph{Least energy solutions}, also called \emph{ground states}, are defined as solutions having minimal energy in the set of all non-trivial solutions. This set is clearly non-empty, since system \eqref{eq:general problem} admits \emph{semi-trivial} solutions (namely non-trivial solutions with some trivial components) such as $\mf w_i$, for $i=1,\dots,K$, where
\begin{equation*}
	(\mf w_i)_j=\begin{cases}
		w_i & j=i, \\
		0 & j\neq i,
	\end{cases}, \qquad i,j\in \left\{1,\dots,K\right\}. 
\end{equation*}
and $w_i$ is the unique positive solution to
\begin{equation}\label{eq:schrodinger 1 componente}
    -\Delta w+\lambda_i w=\mu_i w^{Kq-1} \quad \text{ in }\R^d,
\end{equation}
see \cite{Kwong1989-ln}. Moreover, we will be also interested in \emph{least energy fully non-trivial} (resp. \emph{positive}) \emph{solutions}, namely solutions with all non-trivial (resp. all positive) components, having minimal energy in the set of solutions with this same property. 

Since $Kq>2$, the energy functional $I$ is unbounded from below, and hence, to find least energy solutions, it is natural to proceed by constrained minimization. A first choice for this purpose is the Nehari manifold
\begin{equation}\label{eq:nehari scalare}
    \cN_\b=\left\{\mf u=(u_1,\dots, u_K)\in H^1(\R^d,\R^K): \ \mf{u}\neq \mf{0}, \ I_\b'(\mf{u})[\mf{u}]=0\right\},
\end{equation}
which by definition contains all the solutions of \eqref{eq:general problem}. Moreover, it is well known that $\cN_\b$ is a natural constraint, in the sense that any critical point of $I_\b|_{\cN_\b}$ is in fact a free critical point of $I_\b$, and hence a solution to \eqref{eq:general problem}. Thus, if the infimum of $I_\b$ on $\cN_\b$ is achieved, any minimizer is a ground state. One of the main advantages of dealing with the constrained functional $I_\b|_{\cN_\b}$ consists in the fact that
\begin{equation}\label{eq:Ibeta on nehari}
	I_\b|_{\cN_\b}(\mf u)=\left(\frac{1}{2}-\frac{1}{Kq}\right)\int_{\R^d}\sum_{i=1}^K\left( |\nabla u_i|^2 + \lambda_i |u_i|^2\right),
\end{equation}
which is bounded from below and weakly lower semi-continuous. Therefore, the proof of the existence of a minimizer is rather simple. Since however we are interested in least energy fully non-trivial solutions, a delicate problem consists in determining whether a ground state is semi-trivial or fully non-trivial. 

In order to search for least energy fully non-trivial solutions, another possibility is to minimize $I_\b$ on 
$$\cM_\b=\left\{\mf u\in H^1(\R^d,\R^K): \ u_i\neq 0, \ \partial_i I_\b(\mf{u})[u_i]=0 \text{ for }i=1,\dots,K \right\},$$
and its radial counterpart - whose introduction will be justified later
$$\cM_\b^r=\left\{\mf u\in H^1_{\mathrm{rad}}(\R^d,\R^K): \ u_i\neq 0, \ \partial_i I_\b(\mf{u})[u_i]=0 \text{ for }i=1,\dots,K \right\}.$$
Note that, by definition, any minimizer of $I_\b$ on $\cM_\b^r$ or on $\cM_\b$ is fully non-trivial; nevertheless, dealing with manifolds of higher codimension, the minimization is more involved, and the fact that a minimizer solves \eqref{eq:general problem} is not straightforward. We point out that
\begin{equation*}
	\cM_\b^r\subset \cM_\b \subset \cN_\b,
\end{equation*}
and on all these sets $I_\b$ can be written as in \eqref{eq:Ibeta on nehari}. By definition, if the infimum of $I_\beta|_{\cM_\b}$ is attained, it is attained by a least energy fully non-trivial solution (not necessarily a ground state). 

If instead the infimum of $I_\beta|_{\cM_\b^r}$ is attained, then a minimizer is a \emph{least energy fully non-trivial radial solution}, that is a fully non-trivial radial solution having minimal energy among all the fully non-trivial radial solutions (but this may not be a least energy fully non-trivial solution).

The existence of semi-trivial solutions is a key feature of system \eqref{eq:general problem}, which makes the system \emph{weakly coupled}. As already anticipated, our study is strongly motivated by the fundamental results obtained in \cite{AmCo, Maia2006, Mandel2014cooperative, Mandel2014repulsive, Sir} concerning the weakly coupled systems with pairwise interaction
 \begin{equation}\label{eq:binary}
  \begin{cases}
     -\Delta u + \lambda_1 u = \mu_1|u|^{2q-2}u + \b|u|^{q-2} |v|^q u      &  \quad\text{in }\R^d \\
     -\Delta v + \lambda_2 v = \mu_2|v|^{2q-2}v + \b|u|^{q} |v|^{q-2} v    &  \quad\text{in }\R^d \\
     u,v \in H^1(\R^d). \end{cases}
  \end{equation}
For this problem, it is known that there exists a sharp threshold $\bar \beta \ge 0$ such that:
\begin{itemize}
\item[($i$)] If $\beta>0$, then \eqref{eq:binary} has a non-negative least energy solution.
\item[($ii$)] If $\beta>\bar \beta$, then any least energy is fully non-trivial (thus, up to a change of sign each component is strictly positive in $\R^d$).
\item[($iii$)] If $0<\beta<\bar \beta$, then any least energy solution is semi-trivial.
\item[($iv$)] If $\beta=\bar \beta$, then there exists a least energy solution which is fully non-trivial.
\item[($v$)] It results that $\bar \beta=0$ for $q \in (1,2)$, while $\bar \beta >0$ when $q \ge 2$. 
\end{itemize}
Moreover, for the range $q \ge 2$ with $\beta \in (0, \bar \beta)$, it is known that there exists $\ubar{\beta} \in (0,\bar \beta]$ such that for $\beta<\ubar{\beta}$ system \eqref{eq:binary} has a least energy fully non-trivial solution (which is not a ground state) obtained by minimizing $I_\beta$ on $\cM_\b$. Moreover, such solution is strictly positive in $\R^d$ and radially symmetric. The precise characterization of $\ubar{\beta}$ is not completely settled however (we refer to \cite{IkoTan, Sir, WeZhZo} and references therein for more details).

Finally, in case $\beta<0$, the infimum of $I_\beta$ on $\cM_\b$ is not achieved, while problem \eqref{eq:binary} has a least energy positive radial solution $(u_\b, v_\b)$, obtained by minimizing $I_\beta$ on $\cM_\beta^r$. In the limit of strong competition it results that $(u_\b,v_\b) \to (\bar u, \bar v)$ strongly in $H^1(\R^d)$, where $\bar u$ and $\bar v$ are \emph{fully segregated}, i.e. $\bar u \, \bar v \equiv 0$ in $\R^d$, and $\bar u-\bar v$ is a least energy sign-changing solution to 
\[
-\Delta w +\lambda_1 w_+-\lambda_2 w_- = \mu_1 w_+^{2q-1} -\mu_2 w_-^{2q-1} \quad \text{in $\R^d$}
\] 
(see \cite{CoTeVe, NoTaTeVe, SoTaTeZi} and references therein).

We aim at understanding whether the generalization of these results for system \eqref{eq:general problem} is possible, and how the different type of interaction has an impact on the behavior of the solutions (in particular in the strong competition limit $\beta \to -\infty$). From this perspective, as we shall see rigorously, the simultaneous interaction of all the $K$ components leads to a \emph{partial segregation phenomenon}, namely $\bar u_1 \cdots \bar u_K \equiv 0$ in $\R^d$: thus, positivity sets of different components can overlap, but at least one component must vanish at any point. Partial segregation models are still not much understood, and only few results are available. We refer to \cite{CaRo, BoBuFo} for non-variational systems with symmetric interaction, and to \cite{SoTe1, SoTe2} for variational problems related to harmonic maps into singular spaces.

\subsection{Notation and main results} Let us fix some notation:
\begin{itemize}
    \item We denote by $\norm{\cdot}_i$ the equivalent norm on $H^1(\R^d)$ defined as
    \begin{equation*}
        \norm{u}_i:=\left(\intRn |\nabla u|^2+\lambda_i |u|^2\right)^\frac{1}{2};
    \end{equation*}
    \item We denote by $|\cdot|_p$ the standard norm on $L^p(\R^d)$, and by $|\cdot|_{p,i}$ the equivalent norm including the coefficient $\mu_i$, that is
    \begin{equation*}
        |u|_{p,i}:=\left(\intRn\mu_i|u|^p\right)^\frac{1}{p}.
    \end{equation*}
    \item We set
    \begin{equation}\label{min values}
    	\ell_\b := \inf_{\cN_\b} I, \quad k_\b := \inf_{\cM_\b} I, \quad k_\b^r := \inf_{\cM_\b^r} I.
    \end{equation}
    \item Finally, we denote by
    \begin{equation}\label{c_p}
    c_{p,\lambda,\mu}:=\inf_{u\in H^1(\R^d) \setminus \{0\}} \frac{\int_{\R^d}|\nabla u|^2+\lambda u^2}{\left(\int_{\R^d} \mu |u|^p\right)^{\frac{2}{p}}},
    \end{equation}
    and by
    \begin{equation}\label{eq:definition of S}
    \bar S:= \min \left\{c_{Kq, \lambda_i,\mu_i}: \ i\in \{1,\dots,K\}\right\}.
    \end{equation}
\end{itemize}
With these notations, we can write
\begin{align*}
	\cN_\b&=\left\{\mf u\in H^1(\R^d,\R^K): \ \mf{u}\neq \mf{0},\  \sum_{i=1}^K \norm{u_i}_i^2= \sum_{i=1}^K |u_i|_{Kq,i}^{Kq}+K\b |\prod_{i=1}^K u_i|_q^q\right\}, \\
	\cM_\b&=\left\{\mf u\in H^1(\R^d,\R^K): \ u_i\neq 0, \ \norm{u_i}_i^2=|u_i|_{Kq,i}^{Kq}+\b|\prod_{i=1}^K u_i|_q^q, \quad i=1,\dots,K \right\}, \\
	\cM_\b^r&=\left\{\mf u\in H^1_{\mathrm{rad}}(\R^d,\R^K): \ u_i\neq 0, \ \norm{u_i}_i^2=|u_i|_{Kq,i}^{Kq}+\b|\prod_{i=1}^K u_i|_q^q, \quad i=1,\dots,K \right\}.
\end{align*}
Moreover, we point out that
\[
\|w\|_i^2 \ge \bar S |w|_{Kq,i}^2 \quad \forall w \in H^1(\R^d), \ \forall i =1,\dots,K.
\]

\medskip

We now present the main results of the paper. The first one deals with the minimization on $\cN_\b$ defined in \eqref{eq:nehari scalare}.

\begin{theorem}\label{thm:beta grande}
    For all $\b>0$, the minimal value $\ell_\beta$ defined in \eqref{min values} is achieved, and $\ell_\b>0$. Let
    \begin{equation}
        \bar{\b}:=\inf\left\{\frac{\bar S^{-\frac{Kq}{2}}\left(\sum_{i=1}^K \norm{u_i}_i^2\right)^\frac{Kq}{2}-\sum_{i=1}^K|u_i|_{Kq,i}^{Kq}}{K|\prod_{i=1}^K u_i|_q^q}: \ \mf{u}\in H^1(\R^d,\R^K),\  \prod_{i=1}^K u_i\not\equiv 0\right\},
    \end{equation}
    where $\bar S$ is defined in \eqref{eq:definition of S}. Then:
    \begin{enumerate}[label=\arabic*)]
        \item if $\b<\bar{\b}$, then any ground state is a semi-trivial solution with only one non-trivial component;
        \label{item: beta<beta grande}
        \item if $\b>\bar\b$, then any ground state is fully non-trivial, and (up to a change of sign of some components, and of a translation) is a positive, radial solution to \eqref{eq:general problem}.
        \label{item:beta>beta grande}
    \end{enumerate}
\end{theorem}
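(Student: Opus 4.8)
The strategy is to combine a fibering (scaling) analysis on $\cN_\b$ with symmetric rearrangement.

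\emph{Existence and positivity of $\ell_\b$.} By \eqref{eq:Ibeta on nehari} any minimizing sequence $\mf u^n\in\cN_\b$ is bounded in $H^1(\R^d,\R^K)$. Since $\b>0$, replacing each component by its Schwarz symmetrization $|u_i^n|^*$ preserves $|u_i^n|_{Kq,i}$, does not increase $\sum_i\norm{u_i^n}_i^2$ (P\'olya--Szeg\H{o}) and does not decrease $|\prod_i u_i^n|_q^q$ (Hardy--Littlewood); projecting back onto $\cN_\b$ (there is a unique $t>0$ with $t\mf u^{n,*}\in\cN_\b$ because $Kq>2$) does not increase the energy, so I may assume the minimizing sequence is radial. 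The compact embedding $H^1_{\mathrm{rad}}(\R^d)\embed L^{Kq}(\R^d)$ makes the nonlinear terms converge strongly, while $\sum_i\norm{u_i}_i^2$ is weakly lower semicontinuous; the weak limit $\mf u$ then satisfies $\sum_i\norm{u_i}_i^2\le\sum_i|u_i|_{Kq,i}^{Kq}+K\b|\prod_i u_i|_q^q$, and the chain $\ell_\b\le I_\b(t\mf u)\le\liminf I_\b(\mf u^n)=\ell_\b$ (with $t\le1$ the projection) forces $t=1$, $\mf u\in\cN_\b$ and strong convergence. Finally, on $\cN_\b$ both $\sum_i|u_i|_{Kq,i}^{Kq}$ and, via AM--GM, $K\b|\prod_i u_i|_q^q$ are $\le C(\b)\,(\sum_i\norm{u_i}_i^2)^{Kq/2}$ by Gagliardo--Nirenberg; since $Kq>2$ this gives $\sum_i\norm{u_i}_i^2\ge\delta>0$ on $\cN_\b$, hence $\ell_\b>0$.

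\emph{The scaling comparison.} For $\mf u$ with $\prod_i u_i\not\equiv0$ write $A=\sum_i\norm{u_i}_i^2$, $B=\sum_i|u_i|_{Kq,i}^{Kq}$, $C=|\prod_i u_i|_q^q$; the Nehari projection obeys $t^{Kq-2}=A/(B+K\b C)$, so
\[
I_\b(t\mf u)=\Big(\tfrac12-\tfrac1{Kq}\Big)\,A^{\frac{Kq}{Kq-2}}\,(B+K\b C)^{-\frac{2}{Kq-2}}.
\]
The scalar ground state $w_i$ realises $c_{Kq,\lambda_i,\mu_i}$ and satisfies the scalar Nehari identity, whence $\norm{w_i}_i^2=c_{Kq,\lambda_i,\mu_i}^{Kq/(Kq-2)}$ and the least semitrivial level equals $(\tfrac12-\tfrac1{Kq})\bar S^{Kq/(Kq-2)}$. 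A direct manipulation of the displayed identity then gives
\[
I_\b(t\mf u)<\Big(\tfrac12-\tfrac1{Kq}\Big)\bar S^{\frac{Kq}{Kq-2}}\iff \b>\frac{\bar S^{-\frac{Kq}{2}}A^{\frac{Kq}{2}}-B}{KC},
\]
and the right-hand quantity is exactly the one whose infimum defines $\bar\b$.

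\emph{The dichotomy.} If $\b<\bar\b$, the equivalence fails for every fully non-trivial $\mf u$, so each fully non-trivial point of $\cN_\b$ has energy $>(\tfrac12-\tfrac1{Kq})\bar S^{Kq/(Kq-2)}$; since any configuration with $m\ge2$ non-trivial components decouples and costs at least $m$ times the minimal scalar level, the value $\ell_\b=(\tfrac12-\tfrac1{Kq})\bar S^{Kq/(Kq-2)}$ is attained only by a one-component $\mf w_i$ with $c_{Kq,\lambda_i,\mu_i}=\bar S$, which is \ref{item: beta<beta grande}. If instead $\b>\bar\b$, some fully non-trivial $\mf u$ has $I_\b(t\mf u)$ strictly below every semitrivial level, so $\ell_\b$ lies below them all and no ground state can be semitrivial. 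Replacing a ground state $\mf u$ by $(|u_1|,\dots,|u_K|)$ leaves $A,B,C$ unchanged and hence yields another ground state; as $\cN_\b$ is a natural constraint this solves \eqref{eq:general problem}, and the strong maximum principle applied to $-\Delta|u_i|+\lambda_i|u_i|\ge0$ forces $|u_i|>0$, so up to sign each component is positive.

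\emph{Radial symmetry, and the main difficulty.} The remaining, and most delicate, point is radial symmetry. Because $\b>0$ the coupling is cooperative, since $\partial_{u_j}\big(\b|u_i|^{q-2}u_i\prod_{k\ne i}|u_k|^q\big)>0$ for $j\ne i$ and positive components; I would therefore apply the moving-plane method for cooperative elliptic systems to a positive solution decaying at infinity (smoothness and decay following from elliptic regularity and subcriticality) and conclude that, up to a translation, $\mf u$ is radially symmetric and radially decreasing about a common centre. A rearrangement argument is also available: symmetrizing a positive ground state cannot lower the energy, so it must preserve it; equality in P\'olya--Szeg\H{o} makes each $u_i$ radial about some $x_i$, and the equality case of the Hardy--Littlewood inequality for $|\prod_i u_i|_q^q$ forces the centres $x_i$ to coincide up to a global translation. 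I expect the verification of these equality cases -- especially ruling out distinct centres -- to be the crux; the moving-plane route sidesteps the centre issue and is the safer option.
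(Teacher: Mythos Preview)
Your proof is correct and follows essentially the same route as the paper: existence of a radial non-negative minimizer on $\cN_\b$ via symmetrization and compactness (the paper simply cites \cite{Maia2006} for this), the identical fibering identity $I_\b(t\mf u)=\big(\tfrac12-\tfrac1{Kq}\big)A^{\frac{Kq}{Kq-2}}(B+K\b C)^{-\frac{2}{Kq-2}}$ to compare with the semitrivial level $\big(\tfrac12-\tfrac1{Kq}\big)\bar S^{\frac{Kq}{Kq-2}}$, and radial symmetry via the cooperative moving-plane result of \cite{Busca2000}. One small imprecision: your claim that a configuration with $m\ge2$ nontrivial components ``decouples and costs at least $m$ times the minimal scalar level'' holds for semitrivial \emph{solutions} (where each nonzero component lies on its own scalar Nehari manifold) but not for arbitrary points of $\cN_\b$; since ground states are solutions this suffices, and the paper handles it the same way.
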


\begin{remark}\label{rmk:equality of infima}
	Note that whenever $\mf u$ is a positive radial solution of \eqref{eq:general problem}, $\mf u\in \cM_\b^r$. Then for $\b>\bar \b$
	\begin{equation*}
		\ell_\b=k_\b=k_\b^r.
	\end{equation*} 
\end{remark}
The value $\bar\b =\bar{\b}(\bs{\lambda},\bs{\mu},q,d,K)$ is thus an explicit, sharp threshold for the nature of the energetically convenient configuration. Such a result is a natural generalization of what is known in the case $K=2$, and can be expected by the following intuition: when $\b$ is positive, the interaction term in the functional $I_\b$ is negative and hence contributes to a decrease of the energy, while the positive contribution comes from the $H^1$-norms. It is then natural that, for small values of $\b$ it is energetically convenient to have only one non-zero component reducing the $H^1$-norms which are the leading term; for higher values of $\b$ the interaction term plays a leading role and it becomes convenient to have a positive interaction term, given by non-zero components.

\medskip
Although explicit, the value $\bar\b$ by definition could in principle be negative and even negatively divergent. The next statement ensures that this cannot happen.

\begin{proposition}\label{thm:stime dal basso beta segnato}
    For $K\geq 3$ and $1\leq q<\frac{2d}{K(d-2)}$ if $d\geq 3$, $1\leq q<+\infty$ if $d=2$, we have that $\bar \beta>0$.
\end{proposition}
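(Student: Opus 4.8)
The plan is to produce a \emph{uniform} positive lower bound for the quotient whose infimum defines $\bar\beta$, by first reducing the problem to a finite-dimensional inequality in the norms $a_i:=\norm{u_i}_i$, and then analysing that inequality. Throughout I only use the Sobolev-type inequality $\norm{w}_i^2\ge \bar S\,|w|_{Kq,i}^2$ recorded just before the statement.

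\emph{Step 1 (reduction to finite dimensions).} I would bound numerator and denominator separately. Raising $\norm{u_i}_i^2\ge \bar S\,|u_i|_{Kq,i}^2$ to the power $Kq/2$ gives $|u_i|_{Kq,i}^{Kq}\le \bar S^{-Kq/2}a_i^{Kq}$, hence
\[
\bar S^{-\frac{Kq}{2}}\Big(\sum_i a_i^2\Big)^{\frac{Kq}{2}}-\sum_i|u_i|_{Kq,i}^{Kq}\ \ge\ \bar S^{-\frac{Kq}{2}}\Big[\Big(\sum_i a_i^2\Big)^{\frac{Kq}{2}}-\sum_i a_i^{Kq}\Big].
\]
For the denominator, the generalised H\"older inequality (with $K$ exponents equal to $K$) yields $|\prod_i u_i|_q^q\le \prod_i|u_i|_{Kq}^q$, and since $|u_i|_{Kq}=\mu_i^{-1/(Kq)}|u_i|_{Kq,i}\le \mu_i^{-1/(Kq)}\bar S^{-1/2}a_i$ one gets $K|\prod_i u_i|_q^q\le KC\,\bar S^{-Kq/2}\prod_i a_i^q$ with $C:=\prod_i\mu_i^{-1/K}$. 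Dividing, the factor $\bar S^{-Kq/2}$ cancels and
\[
\frac{\bar S^{-\frac{Kq}{2}}(\sum_i a_i^2)^{\frac{Kq}{2}}-\sum_i|u_i|_{Kq,i}^{Kq}}{K\,|\prod_i u_i|_q^q}\ \ge\ \frac{1}{C}\cdot\frac{(\sum_i a_i^2)^{\frac{Kq}{2}}-\sum_i a_i^{Kq}}{K\prod_i a_i^q}\ =:\ \frac{1}{C}\,\Phi(a).
\]

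\emph{Step 2 (homogeneity and positivity in the interior).} The function $\Phi$ is $0$-homogeneous, so after normalising $\sum_i a_i^2=1$ it suffices to bound $\Phi$ from below on the compact set $\Sigma:=\{a\in\R^K:\ a_i\ge0,\ \sum_i a_i^2=1\}$. On the relatively open part of $\Sigma$ where all $a_i>0$, both $g(a):=1-\sum_i a_i^{Kq}$ and $h(a):=K\prod_i a_i^q$ are continuous and strictly positive; positivity of $g$ follows from the strict inequality $\sum_i a_i^{Kq}=\sum_i (a_i^2)^{Kq/2}<(\sum_i a_i^2)^{Kq/2}$, valid since the exponent $Kq/2>1$ and at least two coordinates are nonzero. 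Hence $\Phi>0$ there, and proving $\bar\beta>0$ reduces to showing $\inf_\Sigma\Phi>0$.

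\emph{Step 3 (the delicate point: behaviour near the vertices).} This is the only place where the hypotheses $K\ge3$, $q\ge1$ enter, and I expect it to be the main obstacle. I would argue by compactness: if $\inf_\Sigma\Phi=0$, pick interior points $a^{(n)}$ with $\Phi(a^{(n)})\to0$ and, up to subsequence, $a^{(n)}\to a^*\in\Sigma$. Since $g(a^{(n)})=\Phi(a^{(n)})\,h(a^{(n)})\le \Phi(a^{(n)})\max_\Sigma h\to0$, we get $g(a^*)=0$, which by the strict power inequality above forces $a^*$ to be a vertex, say $e_1$. Near $e_1$, set $r^2:=\sum_{i\ge2}a_i^2\to0$; a Taylor expansion gives $g(a)=1-(1-r^2)^{Kq/2}-\sum_{i\ge2}a_i^{Kq}\gtrsim r^2$ (the last sum is $O(r^{Kq})=o(r^2)$ because $Kq>2$), while AM--GM gives $\prod_{i\ge2}a_i^2\le (r^2/(K-1))^{K-1}$, so $h(a)\lesssim r^{(K-1)q}$. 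Therefore $\Phi(a)\gtrsim r^{\,2-(K-1)q}$, and since $K\ge3$ and $q\ge1$ force $(K-1)q\ge2$, the exponent is non-positive, so $\Phi$ stays bounded below by a positive constant near every vertex. This contradicts $\Phi(a^{(n)})\to0$, whence $\inf_\Sigma\Phi=:m>0$ and finally $\bar\beta\ge m/C>0$. (It is worth noting that for $K=2$ the same computation would require $(K-1)q=q\ge2$, consistent with the binary threshold recalled in item $(v)$ of the introduction.)
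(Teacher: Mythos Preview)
Your proof is correct and follows the same strategy as the paper: reduce via Sobolev and H\"older to the finite-dimensional quotient $\Phi(a)=\big((\sum_i a_i^2)^{Kq/2}-\sum_i a_i^{Kq}\big)\big/\big(K\prod_i a_i^q\big)$, then show $\Phi$ is bounded away from zero by analysing the asymptotic $\Phi\gtrsim r^{\,2-(K-1)q}$ near the degenerate boundary points, using that $(K-1)q\ge 2$ when $K\ge 3$ and $q\ge 1$. Your symmetric normalisation onto the unit sphere in $\R^K$ together with the compactness argument is a bit more streamlined than the paper's version, which fixes one coordinate to~$1$, passes to hyper-spherical coordinates in the remaining $K-1$ variables, and then treats the regimes $\rho\le 1$ and $\rho>1$ separately (the latter via Young's inequality and a further analysis on $\mathbb{S}^{K-2}_+$); but the underlying computation is identical.
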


\begin{remark}\label{rmk: stima dal basso}
Numerical estimates suggest that in fact 
    \begin{equation*}
        \bar\b \geq \left(\prod_{i=1}^K\mu_i\right)^\frac{1}{K}\left(K^{\frac{Kq}{2}-1}-1\right)>0.
    \end{equation*}
\end{remark}


Within the situation depicted by \Cref{thm:beta grande}, the threshold $\b=\bar\b$ is not considered. It is natural to wonder what happens in this limit case. We can give a partial answer under the assumption $q>2$:

\begin{theorem}\label{thm:beta=beta segnato}
	If $q>2$, then $\ell_{\bar\b}$ is attained by a fully non-trivial ground state.
\end{theorem}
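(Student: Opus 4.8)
The plan is to realize the threshold $\bar\b$ as a \emph{minimum} attained at a fully non-trivial configuration and then to rescale this minimizer onto $\cN_{\bar\b}$. Write
\[
Q(\mf{u}):=\frac{\bar S^{-\frac{Kq}{2}}\left(\sum_{i=1}^K\norm{u_i}_i^2\right)^{\frac{Kq}{2}}-\sum_{i=1}^K|u_i|_{Kq,i}^{Kq}}{K\left|\prod_{i=1}^K u_i\right|_q^q},
\]
so that $\bar\b=\inf\{Q(\mf{u}):\prod_{i=1}^K u_i\not\equiv0\}$, and set $m:=\left(\frac12-\frac1{Kq}\right)\bar S^{\frac{Kq}{Kq-2}}$, the least energy carried by a semi-trivial solution with one non-trivial component. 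A direct computation along the rays $t\mapsto I_{\bar\b}(t\mf{u})$ shows that $Q(\mf{u})<\b$ holds exactly when the projection of $\mf{u}$ onto $\cN_\b$ has energy below $m$; since $\bar\b=\inf Q$, this yields $\ell_{\bar\b}=\inf_{\cN_{\bar\b}}I_{\bar\b}=m$, which is attained by \Cref{thm:beta grande} because $\bar\b>0$ by \Cref{thm:stime dal basso beta segnato}. Hence it suffices to show that $Q$ attains its infimum at some $\mf{u}^*$ with $\prod_{i=1}^K u_i^*\not\equiv0$: rescaling $\mf{u}^*$ to a point of $\cN_{\bar\b}$ and using $Q(\mf{u}^*)=\bar\b$, the same ray computation shows the rescaled configuration has energy exactly $m=\ell_{\bar\b}$, so it is a ground state, and it is fully non-trivial.

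First I would reduce to radial functions. The quotient $Q$ is invariant under $\mf{u}\mapsto s\mf{u}$, and replacing each component by its symmetric decreasing rearrangement $u_i^*$ does not increase $Q$: one has $\norm{u_i^*}_i\le\norm{u_i}_i$ (Pólya–Szegő) and $|u_i^*|_{Kq,i}=|u_i|_{Kq,i}$, so the numerator does not increase, while $\intRn\prod_i(u_i^*)^q\ge\intRn\prod_i|u_i|^q$ by the layer-cake rearrangement inequality for products, so the denominator does not decrease. Thus $\bar\b=\inf_{H^1_{\mathrm{rad}}}Q$. I then take a minimizing sequence $\mf{u}_n$ of radial functions normalized by $\sum_i\norm{u_{n,i}}_i^2=1$; it is bounded, and since $2<Kq<2^*$ the compact Strauss embedding gives, along a subsequence, $\mf{u}_n\weak\mf{u}^*$ in $H^1$ and $u_{n,i}\to u_i^*$ strongly in $L^{Kq}(\R^d)$, whence $\sum_i|u_{n,i}|_{Kq,i}^{Kq}\to\sum_i|u_i^*|_{Kq,i}^{Kq}$ and, by Hölder, $\left|\prod_i u_{n,i}\right|_q^q\to\left|\prod_i u_i^*\right|_q^q$.

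The crucial point, and the main obstacle, is to exclude degeneration of the denominator, i.e.\ to prove $\left|\prod_i u_i^*\right|_q^q>0$; this is where $q>2$ enters. Suppose instead $\left|\prod_i u_{n,i}\right|_q^q\to0$. Since $Q(\mf{u}_n)\to\bar\b<\infty$, the numerator must also vanish, i.e.\ $\sum_i|u_{n,i}|_{Kq,i}^{Kq}\to\bar S^{-Kq/2}$. Combined with the normalization and the strong $L^{Kq}$ convergence, the equality cases of $\norm{w}_i^2\ge\bar S|w|_{Kq,i}^2$ and of $\sum_i a_i^{Kq/2}\le(\sum_i a_i)^{Kq/2}$ force $\mf{u}_n$ to converge strongly in $H^1_{\mathrm{rad}}$ to a configuration whose only non-zero component, of index $k$, is a minimizer for $\bar S=c_{Kq,\lambda_k,\mu_k}$. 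Setting $t_n:=\sum_{i\neq k}\norm{u_{n,i}}_i^2\to0$, the Sobolev inequality bounds the numerator below by $\bar S^{-Kq/2}\bigl(1-(1-t_n)^{Kq/2}-t_n^{Kq/2}\bigr)\gtrsim t_n$, while Hölder together with $|u_{n,i}|_{Kq,i}^2\lesssim\norm{u_{n,i}}_i^2$ gives $\left|\prod_i u_{n,i}\right|_q^q\lesssim t_n^{(K-1)q/2}$. Hence
\[
Q(\mf{u}_n)\gtrsim t_n^{\,1-\frac{(K-1)q}{2}}\longrightarrow+\infty,
\]
because $(K-1)q>2$ (indeed $(K-1)q\ge 2q>4$ when $q>2$ and $K\ge3$), contradicting $Q(\mf{u}_n)\to\bar\b$. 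Therefore $\left|\prod_i u_i^*\right|_q^q>0$, so $\mf{u}^*$ is fully non-trivial; weak lower semicontinuity of the norms and strong convergence of the remaining terms then give $Q(\mf{u}^*)\le\liminf_n Q(\mf{u}_n)=\bar\b$, so $\mf{u}^*$ is a minimizer.

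Finally, rescaling $\mf{u}^*$ onto $\cN_{\bar\b}$ produces, by the first paragraph, a fully non-trivial element of $\cN_{\bar\b}$ of energy $m=\ell_{\bar\b}$; by the natural-constraint property it is a ground state of $I_{\bar\b}$, and replacing each component by its modulus (which preserves the constraint and the energy) one may take it non-negative, hence positive by the strong maximum principle. The heart of the argument is thus the non-degeneracy estimate above: one must quantify the rate at which the numerator of $Q$ vanishes as the configuration collapses onto a single Sobolev extremizer and compare it with the strictly faster decay $t_n^{(K-1)q/2}$ of the product term, the inequality $(K-1)q>2$ (guaranteed by $q>2$ and $K\ge3$) being exactly what tips the competition in our favour.
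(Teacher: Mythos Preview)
Your argument is correct and follows a genuinely different route from the paper's. The paper proceeds by approximation: it first shows that $\beta\mapsto\ell_\beta$ is non-increasing, then picks a sequence $\beta_n\searrow\bar\beta$ and uses \Cref{thm:beta grande} to obtain fully non-trivial positive radial ground states $\mf u_n$ for each $\beta_n$. Boundedness in $H^1_{\mathrm{rad}}$ follows from monotonicity of $\ell_\beta$, and the hypothesis $q>2$ is used precisely to prevent components from vanishing in $L^{Kq}$ (via an inequality of the form $|u_{i,n}|_{Kq,i}^2\lesssim|u_{i,n}|_{Kq,i}^q$); one then passes to the limit in the equation and concludes by convergence of the norms.

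Your approach instead attacks the quotient $Q$ defining $\bar\beta$ directly, reduces to radial decreasing functions by rearrangement, and rules out degeneration of a minimizing sequence by a quantitative comparison of the decay rates of numerator and denominator of $Q$. This is more self-contained (it does not invoke the ground-state existence for $\beta>\bar\beta$) and, as written, actually only uses $(K-1)q>2$ rather than $q>2$; since $K\ge3$, this is a strictly weaker condition and your argument therefore yields more than the statement asserts. One small remark: the bound $\left|\prod_i u_{n,i}\right|_q^q\lesssim t_n^{(K-1)q/2}$ requires, in addition to H\"older and Sobolev, the AM--GM inequality applied to $\prod_{i\neq k}\|u_{n,i}\|_i^2$ under the constraint $\sum_{i\neq k}\|u_{n,i}\|_i^2=t_n$; this is elementary but worth making explicit. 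In short, both proofs hinge on excluding collapse onto a single Sobolev extremizer, but yours does so at the level of the defining quotient $Q$ rather than at the level of the system.
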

\begin{remark}
	It could still happen that a semi-trivial solution $\mf w_i$ attains $\ell_{\bar\b}$.
\end{remark}

We now turn our attention to the case $\b>0$ \emph{small}, that is, the range of values of $\b$ where ground states are semi-trivial. In this regime, it is natural to look for least energy positive solutions via the minimization of $I_\beta$ on $\cM_\b$, since for $\b>0$ the system is cooperative, any positive solution is radially symmetric with respect to a point, thanks to \cite{Busca2000}. This motivates us to directly restrict ourselves on $\cM_\b^r$.
 
\begin{theorem}\label{thm:beta positivo}
There exists a $\ubar{\b}>0$ such that, if $q \ge 2$ and $0<\b<\ubar{\b}$, then
    \begin{equation*}
        k_\b^r=\inf_{\cM_\b^r} I_\b
    \end{equation*}
    is achieved by a least energy positive solution to \eqref{eq:general problem}.
\end{theorem}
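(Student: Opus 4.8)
The plan is to obtain $k_\b^r$ as a minimum by the direct method in the radial space $H^1_{\mathrm{rad}}(\R^d,\R^K)$, and then to upgrade a minimizer to a genuine solution through a Lagrange-multiplier analysis in which the smallness of $\b$ guarantees that all multipliers vanish. Throughout I write $G_i(\mf u):=\norm{u_i}_i^2-|u_i|_{Kq,i}^{Kq}-\b|\prod_j u_j|_q^q$, so that $\cM_\b^r=\{\mf u\in H^1_{\mathrm{rad}}:\ u_i\neq0,\ G_i(\mf u)=0\ \forall i\}$, and set $a_i:=|u_i|_{Kq,i}^{Kq}$, $P:=|\prod_j u_j|_q^q$. \emph{A priori bounds, uniform in small $\b$.} First I would record that for any fixed fully non-trivial radial $\mf v$ there is a fibering vector $\mf t(\b)=(t_1,\dots,t_K)$ projecting $\mf v$ onto $\cM_\b^r$: at $\b=0$ the $K$ equations decouple and are uniquely solvable, and the Jacobian in $\mf t$ is diagonal and non-degenerate, so the implicit function theorem yields a unique nearby projection $\mf t(\b)\to\mf t(0)$ for $\b$ small. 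This makes $\cM_\b^r\neq\emptyset$ and gives a $\b$-uniform upper bound $k_\b^r\le C_0$, hence (by \eqref{eq:Ibeta on nehari}) a uniform bound on $\sum_i\norm{u_i}_i^2$ and, through the constraints, $a_i\le M$ and $P\le\tilde C$ along any near-minimizer. The crucial lower bound combines the constraint $\norm{u_i}_i^2=a_i+\b P$ with $\norm{u_i}_i^2\ge\bar S|u_i|_{Kq,i}^2=\bar S a_i^{2/(Kq)}$ and with the generalized Hölder inequality $P\le(\prod_j\mu_j)^{-1/K}(\prod_j a_j)^{1/K}$: if some $a_i\to0$ while the others stay bounded, then $P\le C a_i^{1/K}$, and dividing $a_i+\b P\ge\bar S a_i^{2/(Kq)}$ by $a_i^{2/(Kq)}$ gives $a_i^{1-2/(Kq)}+\b C' a_i^{(q-2)/(Kq)}\ge\bar S$. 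Here the hypothesis $q\ge2$ is decisive: both exponents are $\ge0$, so the left-hand side tends to $\b C'<\bar S$ for $\b$ small, a contradiction. Thus $a_i\ge c>0$ uniformly.

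\emph{Compactness and attainment.} Given a minimizing sequence $\mf u^n\in\cM_\b^r$, \eqref{eq:Ibeta on nehari} bounds $\norm{u_i^n}_i$, so up to a subsequence $u_i^n\weak u_i$ in $H^1_{\mathrm{rad}}(\R^d)$. Since $2<Kq<2^*$, the compact Strauss embedding $H^1_{\mathrm{rad}}(\R^d)\embed L^{Kq}(\R^d)$ gives $u_i^n\to u_i$ in $L^{Kq}$, whence $a_i^n\to a_i$ and, by the Hölder control above, $P^n\to P$. The uniform lower bound of Step 1 passes to the limit, so $a_i>0$ and $u_i\not\equiv0$; the constraints survive only as inequalities $G_i(\mf u)\le0$ by weak lower semicontinuity of the norms, but a standard fibering projection (with factors $t_i\le1$, since $G_i(\mf u)\le0$) does not increase the energy, so comparing with $k_\b^r$ forces $t_i=1$, i.e. $\mf u\in\cM_\b^r$, and also forces $\sum_i\norm{u_i}_i^2=\lim\sum_i\norm{u_i^n}_i^2$, i.e. strong $H^1$ convergence. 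Hence $I_\b(\mf u)=k_\b^r$ is attained.

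\emph{The minimizer solves the system.} By the Lagrange-multiplier rule there are $\sigma_1,\dots,\sigma_K$ with $I_\b'(\mf u)=\sum_k\sigma_k G_k'(\mf u)$ in $H^1_{\mathrm{rad}}(\R^d,\R^K)$. Testing with the vector carrying $u_m$ in the $m$-th slot and zero elsewhere, and using $\partial_m I_\b(\mf u)[u_m]=G_m(\mf u)=0$, gives the homogeneous system $\sum_k A_{mk}\sigma_k=0$, where a direct computation using the constraints yields
\[
A_{mm}=(2-Kq)\,a_m+(2-q)\b P,\qquad A_{mk}=-q\,\b P\quad(k\neq m).
\]
At $\b=0$ this matrix is diagonal with entries $(2-Kq)a_m\neq0$ since $Kq>2$; using $q\ge2$ and the bounds $a_m\ge c$, $P\le\tilde C$ of Step 1, row $m$ is strictly diagonally dominant as soon as $(Kq-2)c>[(K-2)q+2]\,\tilde C\,\b$, that is for $\b<\ubar{\b}:=(Kq-2)c/\big([(K-2)q+2]\tilde C\big)$. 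For such $\b$ the matrix is invertible, so $\bs{\sigma}=\mf{0}$ and $\mf u$ is a free critical point of $I_\b$ on the radial space. Finally, replacing each $u_i$ by $|u_i|$ leaves $\mf u\in\cM_\b^r$ and $I_\b(\mf u)$ unchanged (the cooperative sign $\b>0$ is used here); by the principle of symmetric criticality ($I_\b$ is $O(d)$-invariant) $\mf u$ solves \eqref{eq:general problem}, the strong maximum principle yields strict positivity, and \cite{Busca2000} justifies having worked in the radial class.

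\emph{Main obstacle.} The delicate points are the $\b$-uniform lower bound $a_i\ge c$ of Step 1 — this is exactly what prevents a component from vanishing in the weak limit, and it breaks down precisely for $q<2$ — and the invertibility of $(A_{mk})$ in Step 3, which is where the threshold $\ubar{\b}$ and the smallness of $\b$ genuinely enter. Both rest on controlling $\b P$ by $\b\,\tilde C$ uniformly, so the two obstacles are really two facets of the same a priori estimate.
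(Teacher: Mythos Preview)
Your overall strategy mirrors the paper's: uniform a~priori bounds (upper and lower), compactness in the radial space via the Strauss embedding, and the Lagrange-multiplier step with the matrix $(A_{mk})$ are essentially the paper's Lemmas~4.2--4.6, and your threshold $\ubar\b$ agrees with the paper's up to cosmetic constants.

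The genuine gap is the ``standard fibering projection'' in Step~2. For a \emph{scalar} Nehari constraint this projection is indeed routine, but for the coupled $K$-component constraint $\cM_\b^r$ it is not: given $\mf u$ with $G_i(\mf u)\le 0$ for all $i$, you need $\mf t\in(0,1]^K$ solving the coupled system
\[
t_i^2\|u_i\|_i^2 = t_i^{Kq}a_i + \b\Big(\prod\nolimits_j t_j^q\Big)P,\qquad i=1,\dots,K.
\]
Decreasing one $t_i$ to enforce $G_i=0$ \emph{decreases} the product $\prod_j t_j^q$, which for $\b>0$ pushes every other $G_j$ in the \emph{positive} direction---so a component-wise argument does not close. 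Alternatively, the global maximum $\mf t^*$ of the fibering energy $\psi(\mf t)=I_\b(t_1u_1,\dots,t_Ku_K)$ on $(0,\infty)^K$ exists and does lie on $\cM_\b^r$, but there is no reason to have $t_i^*\le 1$; without this your comparison with $k_\b^r$ collapses. The paper sidesteps the projection entirely: having shown (from the same matrix invertibility you establish) that $\cM_\b^r\cap\{I_\b<2\bar C\}$ is a $C^1$ manifold, it applies Ekeland's variational principle to produce a constrained Palais--Smale sequence at level $k_\b^r$, proves that the multipliers $\gamma_{n,i}\to 0$ (again via the matrix), so the sequence is actually Palais--Smale for the \emph{free} functional, and then Strauss compactness yields strong $H^1$ convergence to a limit that is automatically in $\cM_\b^r$. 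No fibering projection is needed. A secondary point: the paper obtains the upper bound $k_\b^r\le\bar C$ with $\bar C$ independent of $\b$ by testing with partially segregated configurations ($\prod_i u_i\equiv 0$), whereas your implicit-function-theorem fibering gives a bound that a~priori depends on~$\b$; this is what lets the paper's constants in the lower bound and in $\ubar\b$ be fixed without circularity.
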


The value $\ubar{\b} =\ubar{\b}(\bs{\lambda},\bs{\mu},q,d,K)$ will be explicitly defined in \eqref{eq:definizione ubar beta}, and in particular it is not obtained by any limit process (and is not necessarily a ``small" quantity). It is an interesting open problem to understand whether the same result holds for $1 \le q <2$. We believe that this is the case, but we faced some technical complications which we could not overcome: the case $K=2$ is simpler, as it reduces to the study of critical points of functions of two variables. For a general $K>2$, such a direct argument is much harder and we followed a different path based on the study of minimizing sequences on $\cM_\b^r$.

Moreover, we point out that we proved existence of a least energy positive solution by minimizing $I_\b$ on $\cM_\b^r$, but we could not directly address the minimization on $\cM_\b$. It would be interesting to understand whether $\inf_{\cM_\b} I_\b < \inf_{\cM_\b^r} I_\b$ or if they coincide (in the former case the first infimum cannot be attained, since otherwise there exists a non-negative minimizer, i.e. a positive solution to \eqref{eq:general problem}, and positive solutions are necessarily radial by \cite{Busca2000}).

\medskip

Finally, we focus on the competitive case $\b<0$. Also for this range \Cref{thm:beta grande} motivates the search for least energy positive solutions, either radial or not, by minimizing $I_\b$ on $\cM_\b$ or on $\cM_\b^r$. Note that for $\b<0$ the system is not cooperative, so that the main result in \cite{Busca2000} is not applicable. Hence, it is not natural to restrict the study on $\cM_\b^r$ at a first stage. However, we can prove that the minimization of $\cM_\b$ without the radial constraint is not fruitful.

\begin{theorem}\label{thm:non esistenza non radiale beta negativo}
    Let $\b<0$. Then
    \begin{equation*}
        k_\b=\inf_{\cM_\b}I_\b=\sum_{i=1}^K I_\b(\mf w_i),
    \end{equation*}
    where $\mf w_i$ is defined in \eqref{eq:w_i definition}, and $k_\b$ is not achieved.
\end{theorem}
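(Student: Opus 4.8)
The plan is to establish the two inequalities $k_\b\ge\sum_{i=1}^K I_\b(\mf w_i)$ and $k_\b\le\sum_{i=1}^K I_\b(\mf w_i)$ separately, and then to rule out attainment by inspecting the equality cases of the first one. Throughout I use that on $\cM_\b$ the functional is given by \eqref{eq:Ibeta on nehari}, and I introduce, for each $i$, the scalar action associated with \eqref{eq:schrodinger 1 componente},
\[
J_i(w):=\frac12\norm{w}_i^2-\frac{1}{Kq}|w|_{Kq,i}^{Kq},
\]
whose Nehari manifold $\cN_i:=\{w\neq 0:\ \norm{w}_i^2=|w|_{Kq,i}^{Kq}\}$ has ground state level $\inf_{\cN_i}J_i=J_i(w_i)=I_\b(\mf w_i)=\left(\frac12-\frac1{Kq}\right)\norm{w_i}_i^2$.

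For the lower bound, fix $\mf u\in\cM_\b$. Since $\b<0$ and $|\prod_{j}u_j|_q^q\ge 0$, the constraint defining $\cM_\b$ gives $\norm{u_i}_i^2=|u_i|_{Kq,i}^{Kq}+\b|\prod_{j}u_j|_q^q\le|u_i|_{Kq,i}^{Kq}$. Hence the unique Nehari rescaling $t_iu_i\in\cN_i$, determined by $t_i^{Kq-2}=\norm{u_i}_i^2/|u_i|_{Kq,i}^{Kq}\le 1$, satisfies $t_i\le 1$, so that $I_\b(\mf w_i)=\inf_{\cN_i}J_i\le J_i(t_iu_i)=\left(\frac12-\frac1{Kq}\right)t_i^2\norm{u_i}_i^2\le\left(\frac12-\frac1{Kq}\right)\norm{u_i}_i^2$. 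Summing over $i$ and using \eqref{eq:Ibeta on nehari} yields $\sum_i I_\b(\mf w_i)\le I_\b(\mf u)$, and taking the infimum gives $k_\b\ge\sum_i I_\b(\mf w_i)$.

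For the reverse inequality I would produce an explicit minimizing sequence by separating the scalar ground states. Choose $y_1^n,\dots,y_K^n\in\R^d$ with $|y_i^n-y_j^n|\to\infty$ for $i\neq j$ (e.g. $y_j^n=jn\,e_1$) and set $v_i^n:=w_i(\cdot-y_i^n)$; translation invariance gives $\norm{v_i^n}_i=\norm{w_i}_i$ and $|v_i^n|_{Kq,i}=|w_i|_{Kq,i}$, while the exponential decay of the $w_i$ forces the coupling $C_n:=|\prod_i v_i^n|_q^q\to 0$. I then rescale onto $\cM_\b$ by solving, for $n$ large, the $K$ equations $t_i^2\norm{w_i}_i^2=t_i^{Kq}|w_i|_{Kq,i}^{Kq}+\b\,(t_1\cdots t_K)^q\,C_n$. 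At $C_n=0$ the unique positive solution is $\mf t=\mf 1$ (as $w_i\in\cN_i$), and the Jacobian in $\mf t$ at $(\mf 1,0)$ is diagonal with entries $(2-Kq)\norm{w_i}_i^2\neq 0$ since $Kq>2$; the implicit function theorem then furnishes positive solutions $\mf t^n\to\mf 1$. The configuration $\mf u^n:=(t_1^n v_1^n,\dots,t_K^n v_K^n)\in\cM_\b$ satisfies $I_\b(\mf u^n)=\left(\frac12-\frac1{Kq}\right)\sum_i (t_i^n)^2\norm{w_i}_i^2\to\sum_i I_\b(\mf w_i)$, proving $k_\b\le\sum_i I_\b(\mf w_i)$.

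Finally, to exclude attainment, suppose $\mf u\in\cM_\b$ realizes $k_\b$. Then both inequalities in the lower bound are equalities for each $i$. Equality in $t_i^2\norm{u_i}_i^2=\norm{u_i}_i^2$ forces $t_i=1$, i.e. $\norm{u_i}_i^2=|u_i|_{Kq,i}^{Kq}$, and comparison with the constraint gives $\b|\prod_j u_j|_q^q=0$, whence $\prod_j|u_j|\equiv 0$ because $\b<0$. On the other hand, equality $I_\b(\mf w_i)=J_i(u_i)$ with $u_i\in\cN_i$ means $u_i$ is a ground state of \eqref{eq:schrodinger 1 componente}; since $|u_i|$ is also a minimizer and positive ground states are unique up to translation by \cite{Kwong1989-ln}, each $u_i=\pm w_i(\cdot-y_i)$ never vanishes, so $\prod_j|u_j|>0$ everywhere, a contradiction. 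The two energy inequalities are routine; I expect the technical heart to be the gluing step of the upper bound, namely the implicit function theorem argument (resting on the nondegeneracy $Kq>2$) together with the quantitative decay estimate $C_n\to0$ for the product of $K$ far-apart exponentially decaying bumps.
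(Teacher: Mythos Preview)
Your argument is correct and, for the lower bound and the non-attainment, it coincides with the paper's proof: the paper also uses the Nehari rescaling $t_i^{Kq-2}=\norm{u_i}_i^2/|u_i|_{Kq,i}^{Kq}\le 1$ (written there as a Rayleigh-quotient inequality) and the same contradiction between ``each $u_i$ is a translate of $w_i$, hence nowhere zero'' and ``$\prod_j|u_j|\equiv 0$''.

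The only genuine difference is in the upper bound. You translate the $K$ untruncated ground states far apart and invoke the implicit function theorem to project onto $\cM_\b$, relying on the exponential decay to make $C_n\to 0$. The paper avoids the IFT entirely: it multiplies each $w_i$ by a cut-off $\eta(\cdot/R)$ supported in $B_R$, keeps $\tilde u_{R,2},\dots,\tilde u_{R,K}$ centered at the origin, and translates only $\tilde u_{R,1}$ to $2Re_1$; the supports of $u_{R,1}$ and $u_{R,j}$ ($j\ge 2$) are then disjoint, so the interaction term is \emph{exactly} zero and the projection onto $\cM_\b$ reduces to the explicit scalar rescaling $u_{R,i}=\big(\norm{\tilde u_{R,i}}_i^2/|\tilde u_{R,i}|_{Kq,i}^{Kq}\big)^{1/(Kq-2)}\tilde u_{R,i}$, after which one lets $R\to\infty$. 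Your approach is perfectly valid (and arguably more natural if one thinks of a concentration-compactness picture), but what you identify as the ``technical heart'' --- the IFT step and the decay of $C_n$ --- is simply bypassed in the paper by the cut-off trick, at the modest cost of checking that truncation perturbs the norms only by $o(1)$.
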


On the other hand, by minimizing $I_\b$ on the smaller set $\cM_\b^r$, existence is recovered.
\begin{theorem}\label{thm:esistenza radiale beta negativo}
    Let $\b<0$. Then $k_\b^r=\inf_{\cM_\b^r} I_\b$ is achieved by a least energy fully non-trivial radial solution to \eqref{eq:general problem}, with all non-negative components. 
\end{theorem}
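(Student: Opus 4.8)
The plan is to obtain the minimizer by the direct method, the three substantial points being the compactness of minimizing sequences (made possible by the radial symmetry), the fact that the weak limit does not escape from $\cM_\b^r$, and the proof that a minimizer actually solves \eqref{eq:general problem}. First I would record the a priori information on $\cM_\b^r$. For $\mf u\in\cM_\b^r$ the constraint $\norm{u_i}_i^2=|u_i|_{Kq,i}^{Kq}+\b|\prod_{j=1}^K u_j|_q^q$ together with $\b<0$ gives $\norm{u_i}_i^2\le|u_i|_{Kq,i}^{Kq}$; combined with $\norm{u_i}_i^2\ge\bar S|u_i|_{Kq,i}^2$ and $u_i\neq0$, this yields the uniform lower bounds $\norm{u_i}_i^2\ge \bar S^{Kq/(Kq-2)}>0$ and $|u_i|_{Kq,i}\ge c_0>0$ for every component. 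Hence $k_\b^r\ge(\tfrac12-\tfrac1{Kq})K\bar S^{Kq/(Kq-2)}>0$, and \eqref{eq:Ibeta on nehari} shows every minimizing sequence is bounded in $\HoneradK$. As a preliminary reduction, replacing $\mf u$ by $(|u_1|,\dots,|u_K|)$ leaves $\norm{u_i}_i$, $|u_i|_{Kq,i}$ and $|\prod_j u_j|_q$ unchanged, hence preserves membership in $\cM_\b^r$ and the energy; so it suffices to produce a minimizer, and the replacement will at the end furnish one with non-negative components. Non-emptiness of $\cM_\b^r$ follows by rescaling a radial profile with almost disjoint supports, using the projection lemma below.

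For compactness, let $\mf u^{(n)}$ be a minimizing sequence, bounded in $\HoneradK$, so $u_i^{(n)}\weak u_i$ weakly in $H^1$ up to a subsequence. Since $2<Kq<2^*$, the radial embedding $H^1_{\mathrm{rad}}(\R^d)\embed L^{Kq}(\R^d)$ is compact, whence $u_i^{(n)}\to u_i$ in $L^{Kq}$, so that $|u_i^{(n)}|_{Kq,i}^{Kq}\to|u_i|_{Kq,i}^{Kq}$ and, via the generalized Hölder inequality $|\prod_j u_j|_q^q\le\prod_j|u_j|_{Kq}^q$, also $|\prod_j u_j^{(n)}|_q^q\to|\prod_j u_j|_q^q$. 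The uniform bound $|u_i^{(n)}|_{Kq,i}\ge c_0$ passes to the limit, so every $u_i\neq0$. Setting $R_i:=|u_i|_{Kq,i}^{Kq}+\b|\prod_j u_j|_q^q=\lim_n\norm{u_i^{(n)}}_i^2$, weak lower semicontinuity gives $\pa_i I_\b(\mf u)[u_i]=\norm{u_i}_i^2-R_i\le0$ for all $i$.

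The heart of the argument is to upgrade these to equalities (equivalently, to prove strong $H^1$-convergence). For this I would prove a projection lemma: \emph{any radial $\mf u$ with all components non-zero and $\pa_i I_\b(\mf u)[u_i]\le0$ for all $i$ admits scalars $t_i\in(0,1]$ with $(t_1 u_1,\dots,t_K u_K)\in\cM_\b^r$.} Writing $h_i(\mf t)=t_i^2\norm{u_i}_i^2-t_i^{Kq}|u_i|_{Kq,i}^{Kq}-\b(t_1\cdots t_K)^q|\prod_j u_j|_q^q$, one checks $h_i\ge0$ on the face $\{t_i=\eps\}$ of $[\eps,1]^K$ for $\eps$ small (using $Kq>2$ and $-\b\ge0$), while on $\{t_i=1\}$ one has $h_i\le h_i(\mathbf 1)=\pa_i I_\b(\mf u)[u_i]\le0$ (since $t_j\le1$ shrinks the interaction term). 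The multivariate intermediate value theorem (Poincaré–Miranda) then yields a zero $\mf t\in(0,1]^K$. Applying this to the weak limit and using that on $\cM_\b^r$ the energy equals $(\tfrac12-\tfrac1{Kq})\sum_i t_i^2\norm{u_i}_i^2$, minimality forces
\[
k_\b^r\le\Bigl(\tfrac12-\tfrac1{Kq}\Bigr)\sum_i t_i^2\norm{u_i}_i^2\le\Bigl(\tfrac12-\tfrac1{Kq}\Bigr)\sum_i\norm{u_i}_i^2\le\Bigl(\tfrac12-\tfrac1{Kq}\Bigr)\sum_i R_i=k_\b^r ,
\]
so all $t_i=1$, $\mf u\in\cM_\b^r$, and $\norm{u_i}_i^2=R_i$; the latter, with weak convergence, gives strong convergence, so $\mf u$ is a minimizer.

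Finally, that a minimizer solves \eqref{eq:general problem}. Writing $G_l(\mf u)=\pa_l I_\b(\mf u)[u_l]$ and $P:=|\prod_j u_j|_q^q$, the Lagrange rule gives $\sigma_l$ with $I_\b'(\mf u)=\sum_l\sigma_l G_l'(\mf u)$; testing against $(0,\dots,u_i,\dots,0)$ and using $G_i(\mf u)=0$ produces $A\bs\sigma=0$, where $A_{il}=\pa_i G_l(\mf u)[u_i]$. A direct computation gives $A=\mathrm{diag}(d_l)+b\,\mathbf 1\mathbf 1^T$ with $b=-\b qP\ge0$ and $d_l=2\norm{u_l}_l^2-Kq|u_l|_{Kq,l}^{Kq}=-(Kq-2)|u_l|_{Kq,l}^{Kq}+2\b P<0$. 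Since $\norm{u_l}_l^2>0$ forces $|u_l|_{Kq,l}^{Kq}>|\b|P$, one gets $|d_l|>Kq|\b|P$ and hence $b\sum_l|d_l|^{-1}<1$; by Cauchy–Schwarz the quadratic form $\mf x^TA\mf x=-\sum_l|d_l|x_l^2+b(\sum_l x_l)^2$ is then negative definite, so $A$ is invertible and $\bs\sigma=0$. Thus $I_\b'(\mf u)=0$, and the replacement by $(|u_1|,\dots,|u_K|)$ gives a least energy fully non-trivial radial solution with non-negative components. The main obstacle is the projection step: unlike the pairwise $K=2$ case, the $K$-wise product with $\b<0$ couples all scaling parameters, so the usual one-parameter fibering is unavailable and one must argue via Poincaré–Miranda, taking care that the projecting factors satisfy $t_i\le1$ so that no energy is gained.
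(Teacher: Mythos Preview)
Your proof is correct, and it takes a genuinely different route from the paper's. The paper proceeds by Ekeland's variational principle plus a Palais--Smale argument: it first shows that $\cM_\b^r\cap\{I_\b<2\bar C\}$ is a $C^1$ manifold (via invertibility of the Jacobian $\bs M_\b(\mf u)$), then verifies that $(PS)_{k_\b^r}$ holds for the constrained functional by showing the Lagrange multipliers along any PS sequence tend to zero, and finally applies Ekeland to produce a PS sequence and hence a minimizer. You instead use the direct method together with a Poincar\'e--Miranda projection: rather than perturbing to a PS sequence, you take \emph{any} minimizing sequence, pass to the weak limit, and then project this limit back onto $\cM_\b^r$ by a multivariate fibering argument, obtaining $t_i\in(0,1]$; the chain of inequalities then forces $t_i=1$ and strong convergence simultaneously. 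This avoids Ekeland entirely and replaces the PS machinery with a more elementary topological fixed-point step; the price is that the projection lemma has to be set up carefully (the sign check on the faces of $[\eps,1]^K$ exploits $\b<0$ in an essential way, and indeed such a projection is not available for $\b>0$). Both approaches ultimately rely on the same negative-definiteness computation for the matrix $A=\bs M_\b(\mf u)$ to pass from ``minimizer'' to ``solution''---your rank-one perturbation argument with Cauchy--Schwarz is equivalent to the paper's diagonal-dominance proof. One small point worth making explicit: the invertibility of $A$ is precisely the constraint qualification needed before invoking the Lagrange rule, so logically this computation should precede (or accompany) the multiplier argument rather than follow it; and the passage from a radial critical point to a genuine solution in $H^1(\R^d,\R^K)$ uses the principle of symmetric criticality, which you leave implicit.
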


For $q \ge 2$, the components of any non-negative fully non-trivial solutions are in fact strictly positive in $\R^d$, by the strong maximum principle; it is not clear whether the same property holds for $1\le q<2$. 

Theorems \ref{thm:beta grande}-\ref{thm:esistenza radiale beta negativo} give a rather complete picture for the existence of ground states and of least energy positive solutions for $\beta$ positive and large, and for $\beta<0$. The case when $\beta$ is positive but ``small" is more delicate, and this is consistent to what happen for $K=2$ (again, we refer to \cite{WeZhZo} and references therein). The proofs are inspired in particular by \cite{LinWei1, Mandel2014cooperative, Mandel2014repulsive,SoaveTavares2016}, but several steps have to be modified in order to deal with the different interaction term.

As far as the properties of the solutions are concerned, the main result in \cite{Busca2000} implies that for $\beta>0$ any positive solution is radially symmetric and radially decreasing with respect to a point. In what follows we aim at understanding some properties of least energy radial positive solutions for $\beta<0$. In particular, we focus on the asymptotic behavior of the solutions achieving $k_\b^r$ as $\b \to -\infty$. The properties of the limit allow to understand the structure of the minimizers for $k_\b^r$ when $\beta$ is negative and large.

In the present setting, we introduce the ``limit set"
\begin{equation*}
    \cM_{-\infty}^r=\left\{\mf u\in H^1_{\mathrm{rad}}(\R^d,\R^K): \  \prod_{i=1}^K u_i\equiv 0, \ u_i\neq 0, \ \norm{u_i}_i^2=|u_i|_{Kq,i}^{Kq} \, \text{ for }i=1,\dots,K \right\}.
\end{equation*}
and the functional
\begin{equation*}
    I_{-\infty}(\mf{u})= \sum_{i=1}^K \bigg[ \frac{1}{2} \int_{\R^d}\left( |\nabla u_i|^2 + \lambda_i |u_i|^2\right) - \frac{1}{Kq} \int_{\R^d}   \mu_i|u_i|^{Kq} \bigg].
\end{equation*}

With this notation we can state the following:
\begin{theorem}\label{thm:asintotica beta -infinito}
    The level
    \begin{equation*}
        k_{-\infty}^r:=\inf_{\cM_{-\infty}^r} I_{-\infty}
    \end{equation*}
    is attained by a non-negative $\mf{u}$ solving 
    \begin{equation}\label{eq:eq schrodinger limite all'infinito}
        -\Delta u_i+\lambda_i u_i=\mu_i u_i^{Kq-1}, \quad u_i>0 \quad \text{ in } \{u_i\neq 0\}, \quad \prod_{i=1}^K u_i\equiv 0.
    \end{equation}
    Moreover, $k_\b^r \to k_{-\infty}^r$ as $\beta \to -\infty$, and for any sequence $(\mf{u}_\b)_{\b<0}$ of minimizers of $I_\beta|_{\mathcal{M}_\beta^r}$ there exists a subsequence $\b_n \to -\infty$ and a minimizer $\mf{u}_{-\infty}$ for $k_{-\infty}^r$ such that
    \begin{equation*}
        |\b_n| \int_{\R^d} \prod_{i=1}^K |u_{\b_n,i}|^q \to 0 \quad \text{ and }  \quad \mf{u}_{\b_n}\to\mf{u}_{-\infty} \quad \text{strongly in } H^1(\R^d,\R^K)
    \end{equation*}
    as $n \to \infty$.
\end{theorem}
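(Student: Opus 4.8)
The plan is to combine a compactness argument in the radial setting with a rescaling (fibering) technique, obtaining existence of the minimizer for $k_{-\infty}^r$ as a byproduct of the asymptotic analysis. First I would record that $\cM_{-\infty}^r\neq\emptyset$: choosing positive radial functions supported in $K$ mutually disjoint annuli and rescaling each to satisfy its own Nehari identity produces an element with $\prod_i u_i\equiv 0$, so $k_{-\infty}^r<\infty$; the bound $\|w\|_i^2\ge\bar S|w|_{Kq,i}^2$ together with the limiting Nehari identity $\|u_i\|_i^2=|u_i|_{Kq,i}^{Kq}$ forces $\|u_i\|_i^2\ge\bar S^{Kq/(Kq-2)}$ on $\cM_{-\infty}^r$, whence $k_{-\infty}^r>0$. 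Crucially, if $\prod_i u_i\equiv 0$ then $|\prod_i u_i|_q^q=0$ and the defining identity of $\cM_\beta^r$ reduces to that of $\cM_{-\infty}^r$; thus $\cM_{-\infty}^r\subset\cM_\beta^r$ with $I_\beta=I_{-\infty}$ on $\cM_{-\infty}^r$, which gives $k_\beta^r\le k_{-\infty}^r$ for every $\beta<0$. Using \eqref{eq:Ibeta on nehari}, the minimizers $\mf u_\beta$ provided by \Cref{thm:esistenza radiale beta negativo} satisfy $\sum_i\|u_{\beta,i}\|_i^2=\frac{2Kq}{Kq-2}k_\beta^r\le C$ uniformly, so $(\mf u_\beta)$ is bounded in $H^1_{\mathrm{rad}}(\R^d,\R^K)$. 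Since $2<Kq<2^*$, the compact embedding $H^1_{\mathrm{rad}}(\R^d)\embed L^{Kq}(\R^d)$ lets me extract $\beta_n\to-\infty$ with $u_{\beta_n,i}\weak u_{\infty,i}$ in $H^1$ and $u_{\beta_n,i}\to u_{\infty,i}$ in $L^{Kq}$ (and a.e.), with $u_{\infty,i}\ge0$.

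Second, I would identify the limit and upgrade the convergence. Writing $P_\beta:=|\prod_i u_{\beta,i}|_q^q\ge 0$, the identity $\|u_{\beta,i}\|_i^2=|u_{\beta,i}|_{Kq,i}^{Kq}+\beta P_\beta$ and $\beta P_\beta\le 0$ give $|u_{\beta,i}|_{Kq,i}^{Kq}\ge\|u_{\beta,i}\|_i^2\ge\bar S|u_{\beta,i}|_{Kq,i}^2$, so $|u_{\beta,i}|_{Kq,i}\ge\bar S^{1/(Kq-2)}$ and hence $u_{\infty,i}\neq 0$; moreover summing over $i$ shows $|\beta|P_\beta\le C$, so $P_\beta\to0$ and (by generalized Hölder and $L^{Kq}$ convergence) $|\prod_i u_{\infty,i}|_q^q=\lim P_{\beta_n}=0$, i.e. $\prod_i u_{\infty,i}\equiv0$. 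Weak lower semicontinuity combined with $\beta_nP_{\beta_n}\le0$ yields $\|u_{\infty,i}\|_i^2\le\liminf\|u_{\beta_n,i}\|_i^2\le|u_{\infty,i}|_{Kq,i}^{Kq}$, so the rescaling $t_i:=(\|u_{\infty,i}\|_i^2/|u_{\infty,i}|_{Kq,i}^{Kq})^{1/(Kq-2)}\le1$ places $\mf v:=(t_iu_{\infty,i})_i$ in $\cM_{-\infty}^r$ (rescaling does not affect supports, so segregation persists). The chain
\[
k_{-\infty}^r\le I_{-\infty}(\mf v)=\Big(\tfrac12-\tfrac1{Kq}\Big)\sum_i t_i^2\|u_{\infty,i}\|_i^2\le\Big(\tfrac12-\tfrac1{Kq}\Big)\sum_i\|u_{\infty,i}\|_i^2\le\liminf k_{\beta_n}^r\le\limsup k_{\beta_n}^r\le k_{-\infty}^r
\]
must therefore collapse to equalities. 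This forces $t_i=1$ (so $\mf u_\infty\in\cM_{-\infty}^r$ satisfies the limiting Nehari identities), $\|u_{\infty,i}\|_i^2=\lim\|u_{\beta_n,i}\|_i^2$ for each $i$ (hence $\mf u_{\beta_n}\to\mf u_\infty$ strongly in $H^1$), and $k_{\beta_n}^r\to k_{-\infty}^r$; since every subsequence admits such a further subsequence and $k_\beta^r\le k_{-\infty}^r$ always, $k_\beta^r\to k_{-\infty}^r$. Finally $\beta_nP_{\beta_n}=\|u_{\beta_n,i}\|_i^2-|u_{\beta_n,i}|_{Kq,i}^{Kq}\to\|u_{\infty,i}\|_i^2-|u_{\infty,i}|_{Kq,i}^{Kq}=0$, i.e. $|\beta_n|P_{\beta_n}\to0$; and $I_{-\infty}(\mf u_\infty)=k_{-\infty}^r$, so $k_{-\infty}^r$ is attained.

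Third, I would show any non-negative minimizer $\mf u$ of $k_{-\infty}^r$ solves \eqref{eq:eq schrodinger limite all'infinito}. Since radial $H^1(\R^d)$ functions have a continuous representative on $\R^d\setminus\{0\}$, the set $\Omega_i:=\{u_i>0\}$ is open away from the origin. Fix $\varphi\in C_c^\infty(\Omega_i\setminus\{0\})$; as $u_i\ge c>0$ on $\supp\varphi$, for small $|s|$ the function $u_i+s\varphi$ keeps the positivity set $\Omega_i$ unchanged, so the perturbed configuration still has $\prod_j(\cdot)_j\equiv0$. Rescaling only the $i$-th component by the unique $t(s)>0$ restoring its Nehari identity (with $t(0)=1$) keeps the configuration in $\cM_{-\infty}^r$, and minimality gives $\frac{d}{ds}\big|_{s=0}I_{-\infty}=0$. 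Because $t(s)$ is the critical point of the fibering map $t\mapsto J_i(t(u_i+s\varphi))$ for $J_i(w):=\frac12\|w\|_i^2-\frac1{Kq}|w|_{Kq,i}^{Kq}$, the $t'(0)$-contribution drops out and I am left with $J_i'(u_i)[\varphi]=\intRn\nabla u_i\cdot\nabla\varphi+\lambda_iu_i\varphi-\mu_iu_i^{Kq-1}\varphi=0$. Thus $u_i$ weakly solves $-\Delta u_i+\lambda_iu_i=\mu_iu_i^{Kq-1}$ in $\Omega_i\setminus\{0\}$; the isolated point is removable (zero capacity in $d\ge2$), and elliptic regularity with the strong maximum principle give $u_i>0$ and smoothness in $\Omega_i$, which is \eqref{eq:eq schrodinger limite all'infinito}.

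\emph{The main obstacle.} The delicate point is not the existence per se but making the energy inequalities collapse so as to obtain, simultaneously, strong $H^1$ convergence and the refined bound $|\beta_n|P_{\beta_n}\to0$ (rather than mere boundedness $|\beta_n|P_{\beta_n}\le C$): this hinges on the sign $\beta_nP_{\beta_n}\le0$ and on the rescaling producing an admissible competitor $\mf v\in\cM_{-\infty}^r$. A second subtlety is the passage of the partial-segregation constraint $\prod_i u_i\equiv0$ to the limit, which requires the strong $L^{Kq}$ convergence, and hence the radial setting, indispensable for compactness. In the Euler–Lagrange step, the point requiring care is that admissible perturbations must preserve the constraint: this is why they are supported strictly inside the open positivity set, where $u_i$ is bounded away from $0$, so that neither the supports nor the segregation are altered.
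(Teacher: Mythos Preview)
Your argument is correct and follows essentially the same route as the paper: the inclusion $\cM_{-\infty}^r\subset\cM_\beta^r$, uniform $H^1$ bounds from $k_\beta^r\le k_{-\infty}^r$, radial $L^{Kq}$-compactness, a fibering rescaling to produce a competitor in $\cM_{-\infty}^r$, and the sandwich chain that collapses to yield strong convergence, $|\beta_n|P_{\beta_n}\to0$, and attainment of $k_{-\infty}^r$; the only structural difference is that the paper first proves attainment of $k_{-\infty}^r$ by a direct minimization of the scale-invariant functional $\bar I(\mf u)=\sum_i(\|u_i\|_i/|u_i|_{Kq,i})^{2Kq/(Kq-2)}$, whereas you obtain it as a byproduct of the convergence. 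One minor slip in your Euler--Lagrange step: the test functions $\varphi$ must be taken \emph{radial} so that the perturbed configuration remains in $H^1_{\mathrm{rad}}$ and hence, after rescaling, in $\cM_{-\infty}^r$; once the weak equation holds against radial $\varphi$, it extends to all $\varphi$ by averaging over rotations (exactly as the paper does).
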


The previous statement allows to characterize the limit problem. In our last result, we give a full description of the minimizers of $k_{-\infty}^r$. 

\begin{theorem}\label{thm: asy beh}
Let $\mf{v}$ be a minimizer for $k_{-\infty}^r$. Then, up to a change of sign in some components, there exist two indexes $i_1, i_2 \in \{1,\dots,K\}$ such that 
\begin{itemize}
\item[($i$)] $v_{i_1} v_{i_2} \equiv 0$ in $\R^d$;
\item[($ii$)] $v_j>0$ in $\R^N$ for every $j \neq i_1, i_2$, and is a ground state solution of
\begin{equation}\label{sc pb leps 1}
-\Delta v+\lambda_j v= \mu_j v^{Kq-1}, \quad v>0 \qquad \text{in $\R^d$}.
\end{equation}
\item[($iii$)] The function $w := v_{i_1}-v_{i_2}$ is a \emph{least energy sign-changing radial solution} to
\begin{equation}\label{lescs}
-\Delta w+\lambda_{i_1} w_+-\lambda_{i_2} w_- =\mu_{i_1} w_+^{Kq-1} -\mu_{i_2} w_-^{Kq-1} \qquad \text{in $\R^d$},
\end{equation}
namely a sign-changing radial solution having minimal energy among all the sign-changing radial solutions.
\end{itemize}
\end{theorem}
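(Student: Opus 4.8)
The plan is to exploit that, on $\cM_{-\infty}^r$, the functional decouples: writing $e_i(u):=\left(\tfrac12-\tfrac1{Kq}\right)\norm{u}_i^2$, one has $I_{-\infty}(\mf u)=\sum_{i=1}^K e_i(u_i)$ with each $u_i$ lying on the scalar radial Nehari manifold $\mathcal N_i=\{u\in H^1_{\mathrm{rad}}(\R^d):u\neq0,\ \norm{u}_i^2=|u|_{Kq,i}^{Kq}\}$. Hence $e_i(u_i)\geq c_i:=\inf_{\mathcal N_i}e_i$, the scalar radial ground-state level, with equality if and only if $u_i=w_i$, the unique positive radial solution of \eqref{eq:schrodinger 1 componente} (see \cite{Kwong1989-ln}). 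I would first take a minimizer $\mf v$, which exists, is non-negative and solves \eqref{eq:eq schrodinger limite all'infinito} by \Cref{thm:asintotica beta -infinito}, replace each $v_i$ by $|v_i|$ (this preserves $\mathcal N_i$, the support, and the energy), and call $v_i$ \emph{free} if $\{v_i>0\}=\R^d$ and \emph{constrained} otherwise. A free component is a positive radial solution of \eqref{sc pb leps 1} on all of $\R^d$, hence coincides with $w_i$ and is a ground state, giving ($ii$); moreover its positivity everywhere forces the whole segregation constraint onto the constrained components, i.e. $\prod_{i\ \mathrm{constrained}}v_i\equiv0$.

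The crux is then the structural claim that \emph{exactly two components are constrained}. Once this is established, writing $\{i_1,i_2\}$ for the constrained indices, the factorization $\prod_i v_i=\big(\prod_{j\neq i_1,i_2}w_j\big)\,v_{i_1}v_{i_2}$ together with the positivity of the free factors forces $v_{i_1}v_{i_2}\equiv0$, which is ($i$). Then $e_{i_1}(v_{i_1})+e_{i_2}(v_{i_2})$ is being minimized over pairs with disjoint supports in $\mathcal N_{i_1}\times\mathcal N_{i_2}$, and via the identification $w_+=v_{i_1}$, $w_-=v_{i_2}$ this infimum equals the least-energy \emph{radial sign-changing} level $m_{i_1,i_2}$ for \eqref{lescs} (every disjoint non-negative pair on $\mathcal N_{i_1}\times\mathcal N_{i_2}$ corresponds to a point of the radial nodal Nehari set, and conversely). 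Consequently $w:=v_{i_1}-v_{i_2}$ realizes $m_{i_1,i_2}$ on that set, hence is a genuine least-energy sign-changing radial solution of \eqref{lescs} and, by elliptic regularity, is $C^1$ across its nodal sphere; this is ($iii$).

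To prove the value identity and the structural claim I would argue in two directions. For the upper bound, fixing a pair $(i_1,i_2)$ and a least-energy radial sign-changing solution $W$ of \eqref{lescs} (existence by minimization on the radial nodal Nehari set, with the strict inequality $m_{i_1,i_2}>c_{i_1}+c_{i_2}$, since each nodal part has a proper support), I set $v_{i_1}=W_+$, $v_{i_2}=W_-$ and $v_j=w_j$ otherwise; this lies in $\cM_{-\infty}^r$, giving
\[
k_{-\infty}^r\le \min_{i_1\neq i_2}\Big[\textstyle\sum_{j\neq i_1,i_2}c_j+m_{i_1,i_2}\Big].
\]
The matching lower bound, together with the exclusion of three or more constrained components, is where I expect the \textbf{main obstacle}: a configuration in which $v_1,v_2,v_3$ have proper, mutually overlapping supports (the third vanishing where the other two meet) cannot be discarded by the crude estimate $\sum_i e_i\ge\sum_i c_i$ alone, since that bound is far below the best-pair value. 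My plan is to show that such a configuration never attains the infimum, through a radial rearrangement / interface-consolidation argument: using that each $v_i$ solves the scalar ODE on its (radial) support and that $e_i$ is strictly decreasing under enlargement of a radial support, one transfers the entire segregation burden onto a single interface between two components, freeing all the others, without increasing the energy and strictly decreasing it unless already in the two-constrained configuration.

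Since $\mf v$ is a minimizer while the infimum over genuinely three-constrained configurations is only approached (one support shrinking so as to free the remaining two into a sign-changing pair) but never attained, $\mf v$ must have at most, hence exactly, two constrained components. Combining this with the first two paragraphs yields the full structure ($i$)--($iii$), and in particular the value identity $k_{-\infty}^r=\min_{i_1\neq i_2}\big[\sum_{j\neq i_1,i_2}c_j+m_{i_1,i_2}\big]$, realized precisely by the claimed configuration.
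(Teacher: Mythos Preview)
Your overall framework is sound and matches the paper's in its endpoints: decouple $I_{-\infty}$ on $\cM_{-\infty}^r$, call a component \emph{free} if its positivity set is all of $\R^d$ and \emph{constrained} otherwise, identify free components with scalar ground states, and reduce the constrained pair to the radial sign-changing problem. The paper does exactly this in its final step.

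The genuine gap is precisely where you flag it: the structural claim that \emph{exactly two} components are constrained. Your proposed ``interface-consolidation'' argument is not a proof---you assert that the segregation burden can be transferred to a single interface with a strict energy drop, but give no mechanism. A configuration with three constrained components on nested or interlocking radial annuli cannot be ruled out by the crude bound $\sum_i e_i(v_i)\ge\sum_i c_i$, and the rearrangement you describe (enlarging supports, freeing components) has to be made precise while maintaining the constraint $\prod_i v_i\equiv 0$ and membership in $\cM_{-\infty}^r$. As stated it is a heuristic, not an argument.

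The paper resolves this obstacle by a different, geometric route that exploits the one-dimensional nature of radial profiles and avoids energy comparisons between multi-constrained configurations altogether. It establishes three structural lemmas: (1) each positivity set $\{v_i\neq 0\}$ is connected (hence a ball, annulus, ball-complement, or $\R^d$), proved by dropping a component of a disconnected set and using that the restriction still lies in $\cM_{-\infty}^r$; (2) no two components vanish simultaneously on an open annulus, proved by enlarging one support via the strict monotonicity of the scalar least-energy level under domain inclusion; (3) if $v_i(0)=0$ then $v_i\equiv 0$ near $0$, by a removable-singularity argument. These three facts, combined purely topologically, force the picture: exactly one component vanishes near $0$ (say $v_1$, supported on $\R^d\setminus\overline{B_R}$), exactly one other (say $v_2$) vanishes on $\R^d\setminus B_R$ with the same $R$, and all remaining $v_j$ are free. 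No comparison between three-constrained and two-constrained energies is ever needed. This is the missing idea in your proposal.
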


This theorem reveals that, in order to minimize $k_{-\infty}^r$, it is convenient that two components, say $v_1$ and $v_2$, segregate completely, thus ``paying" the partial segregation condition $\prod_i v_i \equiv 0$ for all the others. At this point, the remaining components are least energy solutions of the corresponding scalar problems, while $v_1$ is the positive part and $v_2$ is the negative part of a least energy sign-changing solution to \eqref{lescs}. As already pointed out, in view of the convergence $\mf{u}_\b \to \mf{u}$ in \Cref{thm:asintotica beta -infinito}, this scenario describes approximately the shape of least energy radial positive solutions to \eqref{eq:general problem} for $\beta$ negative and large. 

We point out that Theorem \ref{thm:asintotica beta -infinito} and in particular \ref{thm: asy beh} reflect a partial segregation phenomenon, and have no counterpart in pairwise interaction systems (whose asymptotic limit is characterized by full segregation).

\begin{remark}
We conclude this introduction with further comments.

1) So far, we always assumed that $d \ge 2$. On the other hand, the problem can be also studied in the $1$-dimensional case. Some of the previous results can be extended straightforwardly. Others present substantial differences (as already pointed out in \cite{Mandel2014repulsive} for the case of binary interaction). We will not give detailed proofs, for the sake of brevity, but we refer the interested reader to Remark \ref{rmk: on d=1} at the end of the paper for a comprehensive discussion. 

Still regarding the dimension, we observe that since it must be $3 \le K < 2d/(d - 2)$, we can equivalently obtain some limitations on $d$ as a function of $K$.

2) It would also be interesting to consider the case $2/3<q<1$. Indeed, in this range the energy function $I_\beta$ is still unbounded from below, since the problem is superlinear. On the other hand, the interaction term in the system is sublinear with respect to each component $u_i$, and hence the functional is not $C^1$ in the full space $H^1(\R^d,\R^K)$.

3) Finally, we point out that we considered the problem in $\R^d$, but most of the results remain true if we consider the system in a bounded domain $\Omega$, prescribing homogeneous Dirichlet boundary conditions. In such a case, obviously it is not possible to restrict working with radial functions (but not even necessary, since the extra compactness given by the radial constraint is granted by the boundedness of $\Omega$). The only result which is affected by this change is Theorem \ref{thm: asy beh}, since in the proof we used some properties of the nodal sets of radial functions, which are $1$-dimensional in nature. The extension of such properties to non-radial functions arising as limits of \eqref{eq:general problem} when $\beta\to -\infty$ is not straightforward. One can probably adapt the arguments in \cite{SoTe1, SoTe2}, but the adaptation is certainly not trivial, and goes beyond the aim of this work.
\end{remark}

\subsection{Structure of the paper}
The paper is organized as follows. In \Cref{sec:preliminary}, we review some basic useful facts. \Cref{sec: proof beta grande} contains the proofs of the results regarding ground states, namely \Cref{thm:beta grande}, \Cref{thm:stime dal basso beta segnato} and \Cref{thm:beta=beta segnato}. The results regarding least energy positive solutions, Theorems \ref{thm:beta positivo}-\ref{thm:esistenza radiale beta negativo}, are proved in \Cref{sec:esistenza minimo radiale beta piccolo e negativo}. Finally, in Section \ref{sec:proof asymptotic} we address the asymptotic behavior of the solution, proving Theorems \ref{thm:asintotica beta -infinito} and \ref{thm: asy beh}.

\section{Preliminary results}\label{sec:preliminary}

In this section we recall some well-established facts about the scalar problem
\begin{equation}\label{eq:scalar schrodinger equation}
	\begin{cases}-\Delta u + \lambda u=\mu u^{p-1}, \quad u>0 \quad \text{in } \R^d\\ u \in H^1(\R^d) \end{cases} \qquad 2<p<2^*.
\end{equation}
It is well known than any solution is radially symmetric with respect to a point; moreover, the radial solution is unique up to translations (see \cite{GNN, Kwong1989-ln}). By scaling
\begin{equation*}
	w_{p,\lambda,\mu}(x)=\mu^{-\frac{1}{p-2}}\lambda^{\frac{1}{p-2}}w_p(\sqrt{\lambda}x),
\end{equation*} 
where $w_p=w_{p,1,1}$ is the unique positive solution of \eqref{eq:scalar schrodinger equation} with $\lambda=\mu=1$. These solutions can be variationally characterized as follows: they are, equivalently, both constrained minimizers of the energy \begin{equation*}
	E_{p,\lambda,\mu}(u):=\frac{1}{2}\int_{\R^d} \left(|\nabla u|^2+\lambda u^2 \right)- \frac{\mu}{p}\int_{\R^d} |u|^p, 
\end{equation*}
on the Nehari manifold $\cN_{p,\lambda,\mu}=\left\{u\in H^1(\R^d)\setminus \left\{0\right\}: \ E_{p,\lambda,\mu}'(u)[u]=0\right\}$; and minimizers of the quotient
\begin{equation*}
	R_{p,\lambda,\mu}(u):= \frac{\int_{\R^d}|\nabla u|^2+\lambda u^2}{\left(\int_{\R^d} \mu |u|^p\right)^{\frac{2}{p}}}
\end{equation*}
over $H^1(\R^d) \setminus \{0\}$. Recalling the definition of $c_{p,\lambda,\mu}$ given in \eqref{c_p}, by uniqueness and scaling we have that
\begin{equation*}
	c_{p,\lambda,\mu}=\frac{\intRn\lambda^{1+\frac{2}{p-2}}\mu^{-\frac{2}{p-2}}\left(|\nabla w_p(\sqrt{\lambda}x)|^2+w_p(\sqrt{\lambda}x)^2\right)dx}{\left(\intRn \mu^{1-\frac{p}{p-2}}\lambda^{\frac{p}{p-2}}|w_p(\sqrt{\lambda}x)|^p dx\right)^{\frac{2}{p}}}=\lambda^{1-\frac{d}{2}+\frac{d}{p}}\mu^{-\frac{2}{p}}c_p.
\end{equation*} 
Moreover, since $w_{p,\lambda,\mu}\in \cN_{p,\lambda,\mu}$,
\begin{align*}
	E_{p,\lambda,\mu}(w_{p,\lambda,\mu})&=\left(\frac{1}{2}-\frac{1}{p}\right)\left(\int_{\R^d}|\nabla w_{p,\lambda,\mu}|^2+\lambda w_{p,\lambda,\mu}^2\right)\\
	&=\left(\frac{1}{2}-\frac{1}{p}\right)\frac{\left(\int_{\R^d}|\nabla w_{p,\lambda,\mu}|^2+\lambda w_{p,\lambda,\mu}^2\right)^{\frac{p}{p-2}}}{\left(\int_{\R^d} \mu |w_{p,\lambda,\mu}|^p\right)^{\frac{2}{p-2}}},
\end{align*}
and the energy of $w_{p,\lambda,\mu}$ can be expressed in terms of $c_{p,\lambda,\mu}$ as
\begin{equation*}
	E_{p,\lambda,\mu}(w_{p,\lambda,\mu})=\left(\frac12-\frac1p\right)c_{p,\lambda,\mu}^{\frac{p}{p-2}}.
\end{equation*}

The scalar problem plays a crucial role in studying system \eqref{eq:general problem}, since scalar solutions define the following semi-trivial solutions (whose energy will often be compared with the one of any fully non-trivial solution): with parameters $\mu_i$, $\lambda_i$, $\b$ as in the general setting \eqref{eq:general problem}, we denote by
\begin{equation}\label{eq:w_i definition}
	(\mf w_i)_j=\begin{cases}
		w_{Kq,\lambda_i,\mu_i} & j=i, \\
		0 & j\neq i,
	\end{cases}, \qquad i,j\in \left\{1,\dots, K\right\}. 
\end{equation}
In particular, letting
\begin{equation*}
	\bar i= \argmin_{i=1,\dots,k} \ c_{Kq,\lambda_i,\mu_i},
\end{equation*}
we define the semi-trivial solution with lowest energy as
\begin{equation}\label{eq:w bar definition}
	\mf{\bar w}:=\mf w_{\bar i}.
\end{equation}
According to \eqref{eq:definition of S}, for all $\b\in\R$
\begin{equation}\label{eq:S and energy of wbar}
	I_\b(\mf {\bar w})=\left(\frac{1}{2}-\frac{1}{Kq}\right) \bar S^{\frac{Kq}{Kq-2}}.
\end{equation}
We emphasize that there exist also semi-trivial solutions to \eqref{eq:general problem} with more than one non-trivial component, such as $(w_{Kq,\lambda_1,\mu_1}, w_{Kq,\lambda_2,\mu_2}, 0,\dots,0)$. However, any such solution is not a ground state. For instance, by thinking at the previous example, we see that
\[
\begin{split}
I_\beta(w_{Kq,\lambda_1,\mu_1}, w_{Kq,\lambda_2,\mu_2}, 0,\dots,0) &= E_{Kq,\lambda_1,\mu_1}(w_{Kq,\lambda_1,\mu_1}) + E_{Kq,\lambda_2,\mu_2}(w_{Kq,\lambda_2,\mu_2})\\
& > \min\{E_{Kq,\lambda_1,\mu_1}(w_{Kq,\lambda_1,\mu_1}), E_{Kq,\lambda_2,\mu_2}(w_{Kq,\lambda_2,\mu_2})\} \\
&=\min\{I_\b (w_{Kq,\lambda_1,\mu_1}),I_\b(w_{Kq,\lambda_2,\mu_2})\},
\end{split}
\]
and the same argument shows that, among the semi-trivial solutions, those with only one non-trivial component have the lowest possible energy.

\section{Existence and properties of ground states}\label{sec: proof beta grande}
The section is devoted to the proof of \Cref{thm:beta grande}, \Cref{thm:stime dal basso beta segnato} and \Cref{thm:beta=beta segnato}.

At first, we aim at studying the minimization of $I_\b$ on the Nehari manifold $\cN_\b$, namely the existence of ground states. The next proposition shows that this problem has a solution, for any $\b>0$.

\begin{proposition}\label{prop:existence of minimizers on nehari vectorial}
	Let $\b>0$. Then
	\begin{equation*}
		\ell_\b=\inf_{\cN_\b} I_\b
	\end{equation*}
	is achieved by a non-negative radial solution to \eqref{eq:general problem}.
\end{proposition}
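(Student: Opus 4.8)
The plan is to produce a minimizing sequence made of nonnegative radial functions via Schwarz symmetrization, and then to extract a strongly convergent subsequence using the compact radial embedding. First I would record the fibering structure of the functional: for $\mf u\neq\mf 0$, writing $A=\sum_i\norm{u_i}_i^2$, $B=\sum_i|u_i|_{Kq,i}^{Kq}$ and $C=|\prod_i u_i|_q^q$, the map $t\mapsto I_\b(t\mf u)=\frac{t^2}{2}A-\frac{t^{Kq}}{Kq}B-\frac{\b t^{Kq}}{q}C$ has, since $B>0$ and $\b>0$, a unique positive critical point $t(\mf u)=(A/(B+K\b C))^{1/(Kq-2)}$, which is the unique dilation sending $\mf u$ onto $\cN_\b$. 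Moreover $\mf u\in\cN_\b$ iff $A=B+K\b C$, in which case $I_\b(\mf u)=c\,A$ with $c=\frac12-\frac1{Kq}>0$ by \eqref{eq:Ibeta on nehari}.

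Next I would check that $\ell_\b>0$ and that minimizing sequences are bounded: combining $\norm{w}_i^2\ge\bar S|w|_{Kq,i}^2$ with Hölder and AM--GM yields $B,C\le \mathrm{const}\cdot A^{Kq/2}$, so on $\cN_\b$ one has $A=B+K\b C\le\mathrm{const}\cdot A^{Kq/2}$, forcing $A\ge\delta>0$ uniformly, hence $\ell_\b\ge c\delta>0$ and $A_n$ bounded. Given a minimizing sequence, replacing each $u_{n,i}$ by its modulus and then by its Schwarz rearrangement $u_{n,i}^\ast$ leaves $B_n$ unchanged, does not increase $\sum_i\norm{u_{n,i}}_i^2$ (Pólya--Szegő), and does not decrease $C_n$ (by the superlevel-set inequality $\int\prod_i f_i\le\int\prod_i f_i^\ast$, valid since intersections of centered balls are centered balls). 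Because $\b>0$, these facts give $t(\mf u_n^\ast)\le1$ and therefore $I_\b\big(t(\mf u_n^\ast)\mf u_n^\ast\big)=c\,t(\mf u_n^\ast)^2\sum_i\norm{u_{n,i}^\ast}_i^2\le c A_n=I_\b(\mf u_n)$; thus I obtain a minimizing sequence $\mf v_n\in\cN_\b$ of nonnegative radial functions.

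Since $2<Kq<2^*$, the compact embedding $H^1_{\mathrm{rad}}(\R^d)\embed L^{Kq}(\R^d)$ lets me pass to a subsequence with $\mf v_n\weak\mf v$ in $H^1$, $B_n\to B$, and (by generalized Hölder) $C_n\to C$. The Nehari identity then gives $A_n\to B+K\b C=\ell_\b/c$, while weak lower semicontinuity gives $\norm{\mf v}^2=A\le\ell_\b/c$; the lower bound $\ell_\b>0$ rules out $\mf v=\mf 0$. Projecting $\mf v$ onto $\cN_\b$ with factor $s\le1$ (here $s^{Kq-2}=cA/\ell_\b\le1$) and using $\ell_\b\le I_\b(s\mf v)=cs^2A\le cA\le\ell_\b$ forces $s=1$ and $A=\ell_\b/c=\lim_n A_n$, so that $\mf v\in\cN_\b$ is a minimizer and the convergence is strong in $H^1$. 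As $\cN_\b$ is a natural constraint, $\mf v$ is a free critical point of $I_\b$, i.e. a nonnegative radial solution of \eqref{eq:general problem}, with positivity in each nontrivial component following a posteriori from the strong maximum principle.

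The \emph{main obstacle} is the passage to the limit in the $K$-fold coupling term together with the exclusion of vanishing; both are resolved by the radial compact embedding, which is available only after the symmetrization reduction. This is precisely where the hypothesis $\b>0$ is used: it guarantees that the rearrangement inequality $C_n^\ast\ge C_n$ cooperates with (rather than opposes) the decrease of energy, and that the Nehari dilation $t(\mf u)$ is well defined and bounded by $1$ along the symmetrized sequence.
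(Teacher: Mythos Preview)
Your proof is correct. The paper itself does not give a detailed argument for this proposition; it simply refers to \cite[Theorem~2.1]{Maia2006} for the binary case and remarks that the moving-planes symmetry result of \cite{Busca2000} for cooperative systems carries over, since for $\beta>0$ system \eqref{eq:general problem} is cooperative. Your route is the standard self-contained one: Schwarz symmetrization of a minimizing sequence (where the only point specific to $K\ge3$ is the $K$-fold Hardy--Littlewood inequality $\int\prod_i f_i\le\int\prod_i f_i^\ast$, which you justify correctly via the layer-cake representation and the nestedness of centered balls), followed by the compact radial embedding $H^1_{\mathrm{rad}}(\R^d)\hookrightarrow L^{Kq}(\R^d)$ and the usual fibering/projection argument. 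The only methodological contrast is that you secure radial symmetry \emph{a priori} through rearrangement, whereas the paper's reference to \cite{Busca2000} points to obtaining it \emph{a posteriori} for any nonnegative ground state; both approaches yield the statement.
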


The proof of this fact has been established in \cite[Theorem 2.1]{Maia2006} for the binary interaction and can be extended in a straightforward way to our context (we notice that the symmetry result in \cite{Busca2000} involved to prove the radial symmetry still applies, since for $\beta>0$ system \eqref{eq:general problem} is cooperative).

It is well known that the Nehari manifold is a natural constraint, in the sense that constrained critical points for $I_\b|_{\cN_\b}$ are in fact free critical points of $I_\b$, and hence solutions to \eqref{eq:general problem}. In particular, this is true for any minimizer, whose existence is ensured by \Cref{prop:existence of minimizers on nehari vectorial}. Then, to conclude the proof of \Cref{thm:beta grande}, we compare the energy of the semi-trivial solutions with the one of fully non-trivial elements in $\cN_\b$.

\begin{proof}[Proof of \Cref{thm:beta grande}, \ref{item: beta<beta grande}]
We want to show that the energy of any fully non-trivial function is strictly larger than $I_\b(\bar{\mf{w}})=\min_{i\in \{1,\dots,K\}}I_\b(\mf{w}_i)$. Let $\b<\bar{\b}$ and assume by contradiction that there exists a fully non-trivial ground state, namely a function $\mf{u}\in \cN_\b$ such that $u_i\neq 0$ for $i=1,\dots,K$ and $I_{\b}(\mf{u}) = \ell_\b$. It is not restrictive to suppose that $u_i \ge 0$ in $\R^d$ for every $i$, and, by the strong maximum principle, we have in fact that $u_i>0$ in $\R^d$ for every $i$. Therefore, $\prod_{i=1}^K u_i > 0$ in $\R^d$ as well and, by definition of $\bar{\b}$,
	\begin{equation*}
		\b<\frac{\bar S^{-\frac{Kq}{2}}\displaystyle \Big(\sum_{i=1}^K\norm{u_i}_i^2\Big)^{\frac{Kq}2}-\sum_{i=1}^K|u_i|_{Kq,i}^{Kq}}{K|\prod_{i=1}^K u_i|_q^q}.
	\end{equation*}
	Recalling \eqref{eq:S and energy of wbar}, it follows that
	\begin{align*}
		I_\b(\mf u) & =\left(\frac{1}{2}-\frac{1}{Kq}\right)\sum_{i=1}^K\norm{u_i}_i^2=\left(\frac{1}{2}-\frac{1}{Kq}\right) \left(\frac{\left(\sum_{i=1}^{K}\norm{ u_i}_{i}^2\right)^{\frac{Kq}{2}}}{\sum_{i=1}^{K}|u_i|_{Kq,i}^{Kq}+K\b |\prod_{i=1}^K u_i|_q^q}\right)^\frac{2}{Kq-2}\\
		&>\left(\frac{1}{2}-\frac{1}{Kq}\right)\bar S^{\frac{Kq}{Kq-2}}=I_\b(\mf {\bar w})\geq \ell_\b,
	\end{align*}
	contradicting the minimality of $\mf u$. Since by \Cref{prop:existence of minimizers on nehari vectorial} the infimum $\ell_\b$ is achieved, it must be achieved by $\mf {\bar w}$ only.
\end{proof}

\begin{proof}[Proof of \Cref{thm:beta grande}, \ref{item:beta>beta grande}]
	If $\b>\bar{\b}$, then by definition of $\bar{\b}$ there exists $\mf u \in H^1(\R^d,\R^K)$ such that 	\begin{equation}\label{eq:inequality 1}
		K\b|\prod_{i=1}^K u_i|_q^q> \bar S^{-\frac{Kq}{2}}\left(\sum_{i=1}^K\norm{u_i}_i^2\right)^{\frac{Kq}{2}}-\sum_{i=1}^K |u_i|_{Kq,i}^{Kq}.
	\end{equation}
	We claim that the energy of this element is strictly less than $I_\b(\mf{\bar w})$. Combined with \Cref{prop:existence of minimizers on nehari vectorial}, and recalling that $\mf{\bar w}$ has the lowest energy among all semi-trival solutions, the existence of a fully non-trivial  and non-negative ground state $\umin$ is proved, together with the fact that all ground states are necessarily fully non-trivial. By the strong maximum principle, each component of $\umin$ is strictly positive in $\R^d.$
	
	To prove the claim, it is not restrictive to assume that $\mf u\in \cN_\b$. Indeed, if not, let $t>0$ be such that $t\mf u\in \cN_\b$, that is,
	\begin{equation*}
		t=\left(\frac{\sum_{i=1}^K \norm{u_i}_i^2}{\sum_{i=1}^K |u_i|_{Kq,i}^{Kq}+K\b|\prod_{i=1}^K u_i|_q^q}\right)^\frac{1}{Kq-2}.
	\end{equation*}
	By homogeneity, \eqref{eq:inequality 1} holds for $t\mf u$ as well. Then, since $\mf u\in \cN_\b$ and by \eqref{eq:inequality 1} and \eqref{eq:S and energy of wbar},
	\begin{align*}
		I_\b(\mf u)&=\left(\frac{1}{2}-\frac{1}{Kq}\right) \left(\frac{\left(\sum_{i=1}^{K}\norm{ u_i}_{i}^2\right)^{\frac{Kq}{2}}}{\sum_{i=1}^{K}|u_i|_{Kq,i}^{Kq}+K\b |\prod_{i=1}^K u_i|_q^q}\right)^\frac{2}{Kq-2} \\
		&<\left(\frac{1}{2}-\frac{1}{Kq}\right) \bar S^{\frac{Kq}{Kq-2}}=I_\b(\mf{\bar w}), 
	\end{align*}
which is the desired estimate.
\end{proof}

Now we prove the positivity of $\bar \beta$.

\begin{proof}[Proof of \Cref{thm:stime dal basso beta segnato}]
	Let $\mf u\in H^1(\R^d)$ be such that $\prod_{i=1}^K u_i\neq 0$. Then
	\begin{equation*}
		|\prod_{i=1}^K u_i|_q^q\leq \prod_{i=1}^K |u_i|_{Kq}^q = \prod_{i=1}^K \mu_i^{-\frac{1}{K}}|u_i|_{Kq,i}^q.
	\end{equation*}
	We define, for a fixed $\mf u$, 
	\begin{equation*}
		s_i=\frac{|u_i|_{Kq,i}}{|u_K|_{Kq,K}}, \quad i=1,\dots, K-1,
	\end{equation*}
	and for notational purpose we set $s_K=1$.
	Then, by the Sobolev and H\"older inequality,
	\begin{align*}
		\norm{u_i}_i^2&\geq \bar S|u_i|_{Kq,i}^2=\bar S s_i^2|u_K|_{Kq,K}^2, \\
		|\prod_{i=1}^K u_i|_q^q&\leq \left(\prod_{i=1}^K s_i^q \mu_i^{-\frac{1}{K}}\right) |u_K|_{Kq,K}^{Kq}.
	\end{align*}
	As a consequence
	\begin{align*}
		\frac{\bar S^{-\frac{Kq}{2}}\left(\sum_{i=1}^K \norm{u_i}_i^2\right)^\frac{Kq}{2}-\sum_{i=1}^K|u_i|_{Kq,i}^{Kq}}{K|\prod_{i=1}^K u_i|_q^q}&\geq \frac{\bar S^{-\frac{Kq}{2}}\left(\bar S|u_K|_{Kq,K}^2(\sum_{i=1}^K s_i^2)\right)^\frac{Kq}{2}-|u_K|_{Kq,K}^{Kq}\left(\sum_{i=1}^K s_i^{Kq} \right)}{K |u_K|_{Kq,K}^{Kq} \prod_{i=1}^K s_i^q \mu_i^{-\frac{1}{K}}} \\
		&\geq \frac{(\sum_{i=1}^K s_i^2)^{\frac{Kq}{2}}-\sum_{i=1}^K s_i^{Kq}}{K \prod_{i=1}^K s_i^q \mu_i^{-\frac{1}{K}}}.
	\end{align*}
	Note that the last term depends on $\mf u$ only through the quantities $s_i$. Recalling the definition of $s_i$, we obtain
\begin{equation*}
		\bar{\b}\geq \left(\prod_{i=1}^K \mu_i\right)^{\frac{1}{K}}\inf_{s_1,\dots, s_{K-1}>0} \frac{(1+\sum_{i=1}^{K-1} s_i^2)^{\frac{Kq}{2}}-1-\sum_{i=1}^{K-1} s_i^{Kq}}{K\prod_{i=1}^{K-1} s_i^q}.
	\end{equation*}
	We now prove that
\begin{equation*}
	\inf_{s_1,\dots, s_{K-1}>0} \frac{(1+\sum_{i=1}^{K-1} s_i^2)^{\frac{Kq}{2}}-1-\sum_{i=1}^{K-1} s_i^{Kq}}{K\prod_{i=1}^{K-1} s_i^q}>0.
\end{equation*}
Since
\begin{equation*}
	(1+\sum_{i=1}^{K-1} s_i^2)^{\frac{Kq}{2}}>1+\sum_{i=1}^{K-1} s_i^{Kq} \quad \forall s_1,\dots,s_{K-1}>0,
\end{equation*}
clearly the infimum is non-negative. Passing to hyper-spherical coordinates, we write $\bs s=(s_1,\dots, s_{K-1})$ as
\begin{equation*}
	\bs s=\rho \bs \theta, \quad \bs \theta=(\theta_1,\dots,\theta_{K-1})\in \S^{K-2}_+, \quad \S^{K-2}_+=\left\{\bs \theta\in \S^{K-2}:\theta_i>0 \quad \forall i=1,\dots,K-1\right\},
\end{equation*}
so that
\begin{equation*}
	\inf_{s_1,\dots, s_{K-1}>0} \frac{(1+\sum_{i=1}^{K-1} s_i^2)^{\frac{Kq}{2}}-1-\sum_{i=1}^{K-1} s_i^{Kq}}{K\prod_{i=1}^{K-1} s_i^q}=\inf_{\substack{\bs \theta\in \S^{K-2}_+\\ \rho>0}} \frac{\left(1+\rho^2\right)^{\frac{Kq}{2}}-1-\rho^{Kq}\sum_{i=1}^{K-1}\theta_i^{Kq}}{\rho^{q(K-1)}\prod_{i=1}^{K-1}\theta_i^q}.
\end{equation*}
We split the analysis in the regions $\{\rho\leq 1\}$ and $\{\rho>1\}$.
Since $\sum_{i=1}^{K-1}\theta_i^{Kq}\leq \sum_{i=1}^{K-1}\theta_i^2=1$ and $\prod_{i=1}^{K-1}\theta_i^q\leq 1$, we have that
\begin{equation*}
	\frac{\left(1+\rho^2\right)^{\frac{Kq}{2}}-1-\rho^{Kq}\sum_{i=1}^{K-1}\theta_i^{Kq}}{\rho^{q(K-1)}\prod_{i=1}^{K-1}\theta_i^q}\geq \frac{\left(1+\rho^2\right)^{\frac{Kq}{2}}-1-\rho^{Kq}}{\rho^{q(K-1)}},
\end{equation*}
the last term being positive for any $\rho>0$.
Noticing that under the assumptions on $q$ and $K$ it holds
\begin{equation*}
	\lim_{\rho\to 0^+} \frac{\left(1+\rho^2\right)^{\frac{Kq}{2}}-1-\rho^{Kq}}{\rho^{q(K-1)}}>0,
\end{equation*}
this proves that
\begin{equation*}
	\inf_{\substack{\bs \theta\in \S^{K-2}_+\\ 0<\rho\leq 1}} \frac{\left(1+\rho^2\right)^{\frac{Kq}{2}}-1-\rho^{Kq}\sum_{i=1}^{K-1}\theta_i^{Kq}}{\rho^{q(K-1)}\prod_{i=1}^{K-1}\theta_i^q}>0.
\end{equation*}

The case $\rho\geq 1$ requires a slightly finer argument at infinity. We first observe that
\begin{equation}\label{eq:limite a rho inf}
	\lim_{\rho\to +\infty} \frac{\left(1+\rho^2\right)^{\frac{Kq}{2}}-1-\rho^{Kq}}{\rho^{q(K-2)}}>0.
\end{equation}
Then, we estimate
\begin{align*}
	\frac{\left(1+\rho^2\right)^{\frac{Kq}{2}} - 1 - \rho^{Kq} \sum_{i=1}^{K-1} \theta_i^{Kq}}{\rho^{q(K-1)} \prod_{i=1}^{K-1} \theta_i^q} &= \frac{(1+\rho^2)^{\frac{Kq}{2}} - 1 - \rho^{Kq}}{\rho^{(K-2)q}} \cdot \frac{1}{\rho^q \prod_{i=1}^{K-1} \theta_i^q}
	+ \frac{\rho^q \left(1 - \sum_{i=1}^{K-1} \theta_i^{Kq} \right)}{\prod_{i=1}^{K-1} \theta_i^q} \\
	&\geq C \frac{1}{\rho^q \prod_{i=1}^{K-1} \theta_i^q}
	+ \frac{\rho^q \left(1 - \sum_{i=1}^{K-1} \theta_i^{Kq} \right)}{\prod_{i=1}^{K-1} \theta_i^q} \\
	& \geq 2 \sqrt{C} \frac{\left(1 - \sum_{i=1}^{K-1} \theta_i^{Kq} \right)^{1/2}}{\prod_{i=1}^{K-1} \theta_i^q},
\end{align*}	
where we used Young's inequality and \eqref{eq:limite a rho inf}. We are left to prove that
\begin{equation*}
	\inf_{\bs \theta\in \S^{K-2}_+} \frac{\left(1 - \sum_{i=1}^{K-1} \theta_i^{Kq} \right)^{1/2}}{\prod_{i=1}^{K-1} \theta_i^q} >0.
\end{equation*}
Since $q\geq 1$, it is actually enough to prove that
\begin{equation*}
	\inf_{\bs \theta\in \S^{K-2}_+} \frac{\left(1 - \sum_{i=1}^{K-1} \theta_i^{K} \right)^{1/2}}{\prod_{i=1}^{K-1} \theta_i} >0.
\end{equation*}
The function is strictly positive for all $\bs \theta\in \S^{K-2}_+$ and tends to zero only if $\theta_i\to 1$ for some $i$ and $\theta_j\to 0$ for all $j\neq i$. As a consequence, it suffices to prove, without loss of generality, that
\begin{equation*}
	\lim_{\substack{\theta_1\to 1 \\ \sum_i\theta_i^2=1}} \frac{\left(1-\sum_{i=1}^{K-1}\theta_i^k\right)}{\prod_{i=1}^{K-1}\theta_i}>0.
\end{equation*}
If $\theta_2^2+\dots+\theta_{K-1}^2=r^2\to 0$, then $\theta_1=\sqrt{1-r^2}$, $\theta_2, \dots, \theta_{K-1}\leq r$ and
\begin{align*}
	\lim_{\substack{\theta_1\to 1 \\ \sum_i\theta_i^2=1}} \frac{\left(1-\sum_{i=1}^{K-1}\theta_i^K\right)}{\prod_{i=1}^{K-1}\theta_i}&\geq \lim_{r\to 0^+} \frac{\left(1-(1-r^2)^{\frac{K}{2}}-(K-2)r^K\right)^{\frac{1}{2}}}{\sqrt{1-r^2} \, r^{K-2}}\\
	&=\lim_{r\to 0^+}\frac{\left(\frac{K}{2}r^2-(K-2)r^K\right)^{\frac{1}{2}}}{\left(1-\frac{1}{2}r^2\right)r^{K-2}}=\lim_{r\to 0^+}C r^{3-K}>0,
\end{align*}
and the proof is concluded. \end{proof}

\begin{remark}
The lower estimate for $\bar \beta$ given in Remark \ref{rmk: stima dal basso} comes from the fact that
\begin{equation*}
	\inf_{s_1,\dots, s_{K-1}>0} \frac{(1+\sum_{i=1}^{K-1} s_i^2)^{\frac{Kq}{2}}-1-\sum_{i=1}^{K-1} s_i^{Kq}}{K\prod_{i=1}^{K-1} s_i^q}=K^{\frac{Kq}{2}-1}-1,
\end{equation*}
the infimum being computed through numerical optimization algorithms.
\end{remark}

We conclude the section with the proof of the partial result about the threshold case, $\b = \bar \b$.

\begin{proof}[Proof of \Cref{thm:beta=beta segnato}]
	We first prove that the function
	\begin{equation*}
		\b \mapsto \ell_\b
	\end{equation*}
	is non-increasing on $(0,+\infty)$. For any $\mf u\in \cN_{\b}$
		\begin{equation*}
		I_\b(\mf u)=\left(\frac{1}{2}-\frac{1}{Kq}\right)\sum_{i=1}^K \norm{u_i}_i^2=\left(\frac{1}{2}-\frac{1}{Kq}\right) \left(\frac{\left(\sum_{i=1}^{K}\norm{ u_i}_{i}^2\right)^{\frac{Kq}{2}}}{\sum_{i=1}^{K}|u_i|_{Kq,i}^{Kq}+K\b |\prod_{i=1}^K u_i|_q^q}\right)^\frac{2}{Kq-2}\eqcolon\mathcal{R}_\b(\mf u),
	\end{equation*}
	whence we deduce that
	\begin{equation}\label{st2061}
		\ell_\b=\inf_{\cN_\b} I_\b=\inf_{\cN_\b} \mathcal{R}_\b\geq \inf_{\HoneK\setminus\{0\}} \mathcal{R}_\b.
	\end{equation}
On the other hand, for any $\mf u\in \HoneK\setminus\{0\}$ there exists a unique $t(\mf u)>0$, given by
	\begin{equation*}
		t(\mf u)=\left(\frac{\sum_{i=1}^K \norm{u_i}_i^2}{\sum_{i=1}^K |u_i|_{Kq,i}^{Kq}+K\b|\prod_{i=1}^K u_i|_q^q}\right)^\frac{1}{Kq-2},
	\end{equation*}
	such that $t(\mf u)\mf u\in \cN_{\b}$. By homogeneity of $\mathcal{R}_\b$, for every $\mf{u} \in \HoneK\setminus\{0\}$
	\begin{equation*}
		\mathcal{R}_\b(\mf u)=\mathcal{R}_\b (t(\mf u)\mf u) \geq \inf_{\cN_\b} \mathcal{R}_\b = \ell_\b,
	\end{equation*}
	and hence
	\begin{equation}\label{st2062}
		\inf_{\HoneK\setminus\{0\}} \mathcal{R}_\b \ge \ell_\b.
	\end{equation}
	By \eqref{st2061}, we infer that in fact equality holds in \eqref{st2062}. At this point, since for any fixed $\mf u\in\HoneK\setminus\{0\}$ we have that $\b\mapsto \mathcal{R}_\b(\mf u)$ is non-increasing, we conclude that the same holds for $\ell_\b$.
	
	Let now $(\b_n)_n\subset (0,+\infty)$, $(\mf u_n)_n\subset H^1_{\mathrm{rad}}(\R^d,\R^K)$ be sequences such that $\b_n \searrow \bar\b$ as $n\to \infty$ and $\mf u_n$ is a fully non-trivial ground state for $I_{\beta_n}$, positive and radial, whose existence is ensured by \Cref{thm:beta grande}. By the monotonicity of $\b\mapsto \ell_\b$, $(\mf u_n)_n$ is bounded. Then, as $n\to\infty$,
	\begin{align*}
		|u_{i,n}|_{Kq,i}^2&\leq \frac{\norm{u_{i,n}}_i^2}{\bar S}=\frac{|u_{i,n}|_{Kq,i}^{Kq}+\b_n\left|\prod\nolimits_j u_{j,n}\right|_q^q}{\bar S} \\
		&\leq \frac{1+\bar\b+o(1)}{\bar S}|u_{i,n}|_{Kq,i}^{q}\left(|u_{i,n}|_{Kq,i}^{(K-1)q}+\left(\prod_{j=1}^K \mu_j\right)^{-\frac{1}{K}}\prod_{j \neq i}|u_{j,n}|_{Kq,j}^{q}\right) \\
		&\leq \frac{C(1+\bar \b+o(1))}{\bar S}|u_{i,n}|_{Kq,i}^{q}
	\end{align*}
	for $i=1,\dots,K$, where $C$ is independent of $n$. By the hypothesis on $q$, $\exists \delta>0$ independent of $n$ such that
	\begin{equation*}
		|u_{i,n}|_{Kq,i}\geq \delta \quad \forall n\in \N, \, \forall i=1,\dots,K.
	\end{equation*}
	It follows that, up to subsequences, there exists $\mf u\in H^1_{\mathrm{rad}}(\R^d,\R^K)$ fully non-trivial such that
	\begin{equation*}
		\mf u_n\to \mf u \text{ in } L^{Kq}(\R^d,\R^K), \quad \mf u_n\rightharpoonup \mf u \text{ in } H^1_{\mathrm{rad}}(\R^d,\R^K), \quad \mf u_n \to \mf u \text{ a.e. in } \R^d \quad \text{ as } n\to\infty.
	\end{equation*}
	Then, the weak formulation of \eqref{eq:general problem} for $\mf u_n$ with $\b=\b_n$ passes to the limit and we obtain that $\mf u$ is a weak solution of \eqref{eq:general problem} with $\b=\bar \b$. In particular, $\mf u\in \cN_{\b}$ and
	\begin{equation*}
		\norm{u_i}_i^2=|u_i|_{Kq,i}^{Kq}+\b|\prod_{j=1}^K u_j|_q^q=\lim_{n\to\infty} \left(|u_{i,n}|_{Kq,i}^{Kq}+\b_n|\prod_{j=1}^K u_{j,n}|_q^q\right)=\lim_{n\to\infty}\norm{u_{i,n}}_i^2.
	\end{equation*}
	Then $\mf u_n\to \mf u$ in $H^1_{\mathrm{rad}}(\R^d,\R^K)$ strongly, and hence
	\begin{equation*}
		I_\b(\mf u)=\lim_{n\to\infty}I_{\b_n}(\mf u_n)=\lim_{n\to\infty} \ell_{\b_n}\leq \ell_{\bar\b},
	\end{equation*}
	by monotonicity of $\ell_\b$, which concludes the proof. \qedhere
\end{proof}

\section{Least energy positive solutions in the weak cooperative and competitive case}\label{sec:esistenza minimo radiale beta piccolo e negativo}
In the first part of the section we prove Theorems \ref{thm:beta positivo} and \ref{thm:esistenza radiale beta negativo} through a careful analysis of minimizing sequences for $I_\b$ on $\cM_\b^r$, in the spirit of \cite[Section 2]{SoaveTavares2016}. We carry out the two proofs in a unified way as they share the same steps, through a series of lemmas. Afterwards, in Subsection \ref{sub: non-ex}, we prove Theorem \ref{thm:non esistenza non radiale beta negativo}.

The first lemma gives a uniform upper bound on $k_\b^r$.

\begin{lemma}
	There exists $\bar C>0$ such that 
	\begin{equation}\label{eq:controllo dall'alto k_b^r}
		k_\b^r\leq \bar{C} \qquad \text{for all $\beta \in \R$}.
	\end{equation}
	More precisely, one can take
	\begin{equation}\label{eq:definizione Cbar}
		\bar{C}=\frac{Kq-2}{2Kq}\max_i\frac{\left(1+\lambda_i^2\right)^\frac{Kq}{Kq-2}}{\mu_i^\frac{2}{Kq-2}}\inf \left\{\sum_{i=1}^K \tilde{S}(\Omega_i)^{\frac{Kq}{Kq-2}} \left| \begin{array}{l} \Omega_1,\dots,\Omega_K\subset \R^d \\\text{are open and radially symmetric, } \\ \text{and } \ \bigcap_{i=1}^K \Omega_i=\emptyset \end{array}\right.\right\}
	\end{equation}
	with $\tilde{S}(\Omega)$ the best Sobolev constant for the embedding $H^1_{\mathrm{rad}}(\Omega)\hookrightarrow L^{Kq}(\Omega)$, namely
	\begin{equation*}
		\tilde{S}(\Omega)=\inf_{u\in H^1(\Omega)\setminus \{0\}} \frac{\int_{\Omega} |\nabla u|^2+|u|^2}{\left(\int_{\Omega}|u|^{Kq}\right)^\frac{2}{Kq}}
	\end{equation*}
\end{lemma}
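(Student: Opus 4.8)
The plan is to produce, for every $\b\in\R$ \emph{simultaneously}, an admissible competitor $\mf u\in\cM_\b^r$ whose energy does not depend on $\b$, and to optimize it into the stated constant. The mechanism is partial segregation: if the supports of the components have empty common intersection, then $\prod_{i=1}^K u_i\equiv 0$, so the interaction term drops out of both the constraint and the functional, and $\mf u$ lies on $\cM_\b^r$ for \emph{all} $\b$ at once. Concretely, given radially symmetric open sets $\Omega_1,\dots,\Omega_K$ with $\bigcap_{i=1}^K\Omega_i=\emptyset$, I would pick for each $i$ a nonzero radial $v_i$ supported in $\Omega_i$; since no point lies in all the $\Omega_i$, automatically $\prod_i v_i\equiv 0$. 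After rescaling $v_i\mapsto t_iv_i$ so that each component sits on the scalar Nehari manifold $\cN_{Kq,\lambda_i,\mu_i}$, the defining identity of $\cM_\b^r$ collapses to $\norm{u_i}_i^2=|u_i|_{Kq,i}^{Kq}$ for each $i$ (the term $\b|\prod_j u_j|_q^q$ vanishes). Hence $\mf u=(t_1v_1,\dots,t_Kv_K)\in\cM_\b^r$ for every $\b$, and by \eqref{eq:Ibeta on nehari}
\[
I_\b(\mf u)=\Big(\tfrac12-\tfrac1{Kq}\Big)\sum_{i=1}^K\norm{t_iv_i}_i^2=\sum_{i=1}^K E_{Kq,\lambda_i,\mu_i}(t_iv_i),
\]
which is free of $\b$.

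Next I would optimize the scalar energies. The usual Nehari scaling gives $t_i^{Kq-2}=\norm{v_i}_i^2/|v_i|_{Kq,i}^{Kq}$, whence
\[
E_{Kq,\lambda_i,\mu_i}(t_iv_i)=\Big(\tfrac12-\tfrac1{Kq}\Big)\Big(\frac{\norm{v_i}_i^2}{|v_i|_{Kq,i}^2}\Big)^{\frac{Kq}{Kq-2}}=\Big(\tfrac12-\tfrac1{Kq}\Big)R_{Kq,\lambda_i,\mu_i}(v_i)^{\frac{Kq}{Kq-2}}.
\]
To pass to $\tilde{S}(\Omega_i)$, I would use the elementary bound $\intRn|\nabla v|^2+\lambda_i v^2\le\max(1,\lambda_i)\intRn|\nabla v|^2+v^2$ together with the $\mu_i$ factor hidden in $|\cdot|_{Kq,i}$, giving
\[
R_{Kq,\lambda_i,\mu_i}(v)\le\frac{\max(1,\lambda_i)}{\mu_i^{2/Kq}}\,\frac{\intRn|\nabla v|^2+v^2}{\big(\intRn|v|^{Kq}\big)^{2/Kq}}.
\]
Taking the infimum over radial $v$ supported in $\Omega_i$ turns the last quotient into $\tilde{S}(\Omega_i)$, so that $\inf R_{Kq,\lambda_i,\mu_i}\le\max(1,\lambda_i)\mu_i^{-2/Kq}\tilde{S}(\Omega_i)$. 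Raising to the power $Kq/(Kq-2)$ and using $\max(1,\lambda_i)\le 1+\lambda_i^2$ yields the optimized per-component estimate $(\tfrac12-\tfrac1{Kq})(1+\lambda_i^2)^{\frac{Kq}{Kq-2}}\mu_i^{-\frac{2}{Kq-2}}\tilde{S}(\Omega_i)^{\frac{Kq}{Kq-2}}$.

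Summing over $i$, bounding each $i$-dependent coefficient by $\max_i(1+\lambda_i^2)^{\frac{Kq}{Kq-2}}\mu_i^{-\frac{2}{Kq-2}}$, and finally taking the infimum over all admissible configurations $(\Omega_1,\dots,\Omega_K)$ reproduces exactly the constant $\bar C$ of \eqref{eq:definizione Cbar}, with $k_\b^r\le\bar C$ for every $\b\in\R$. Finiteness of $\bar C$ — the actual content of the lemma — follows by exhibiting a single valid configuration, for instance $K$ pairwise disjoint concentric radial annuli, for which each $\tilde{S}(\Omega_i)$ is finite and positive. I expect no deep obstacle here; the only point needing care is admissibility, namely that the test functions extend by zero and satisfy $\{v_i\neq 0\}\subset\Omega_i$. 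I would therefore take $v_i\in H^1_{0,\mathrm{rad}}(\Omega_i)$ (radial functions vanishing on $\partial\Omega_i$), which is precisely the class underlying $\tilde{S}(\Omega_i)$; with this understood, the whole argument is a scaling computation whose key feature is that the partial-segregation choice of supports makes the bound entirely independent of $\b$.
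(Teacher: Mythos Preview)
Your proposal is correct and follows essentially the same approach as the paper: build competitors in $\cM_\b^r$ with $\prod_i u_i\equiv 0$ via radial functions supported in sets with empty common intersection, so the $\b$-term drops and the energy reduces to a sum of scalar Rayleigh quotients, then optimize. Your write-up is in fact more careful than the paper's terse version---you make explicit the Nehari scaling $t_i^{Kq-2}=\|v_i\|_i^2/|v_i|_{Kq,i}^{Kq}$, the elementary bound $\max(1,\lambda_i)\le 1+\lambda_i^2$, and the need to take $v_i\in H^1_{0,\mathrm{rad}}(\Omega_i)$ so that extension by zero is legitimate.
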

\begin{proof}
	Let $\mf u\in \cM_\b^r$ be such that $\prod_{i=1}^K u_i\equiv 0$. Then
	\begin{equation*}
		\norm{u_i}_i^2=\mu_i |u_i|_{Kq}^{Kq}.
	\end{equation*}
	As a consequence, for all $\b\in \R$
	\begin{equation*}
		k_\b^r\leq I_\b (\mf u)=\frac{Kq-2}{2Kq}\sum_{i=1}^{K}\left(\frac{\norm{u_i}_i^2}{|u_i|_{Kq,i}^{2}}\right)^\frac{Kq}{Kq-2}\leq \frac{Kq-2}{2Kq}\max_i \frac{(1+\lambda_i^2)^{\frac{Kq}{Kq-2}}}{\mu_i^{\frac{2}{Kq-2}}} \sum_{i=1}^K \left(\frac{\norm{u_i}_{H^1(\Omega_i)}^2}{|u_i|_{Kq}^2}\right)^\frac{Kq}{Kq-2}
	\end{equation*}
	Taking the infimum over all such $\mf u$ and noticing that, for every $\mf u\in H^1_{\mathrm{rad}}(\R^d,\R^K)$ such that $\prod_{i=1}^K u_i \equiv 0$, up to a suitable component-wise multiplication we have that $\mf u\in \cM_\b^r$, we conclude.
\end{proof}
\begin{remark}
	A priori the value of the infimum in the optimal partition problem \eqref{eq:definizione Cbar}
	could be zero. However, by \eqref{eq:controllo dall'alto k_b^r} this cannot occur. Indeed, in that case
	\begin{equation*}
		k_\b^r=0 \quad \forall \b\in \R,
	\end{equation*} 
	contradicting the fact that (see \Cref{rmk:equality of infima})
	\begin{equation*}
		\ell_\b=k_\b^r>0 \quad \text{for } \b>\bar\b.
	\end{equation*}
\end{remark}

\medskip

Before proceeding, we introduce a notation which will be convenient throughout the section. Define, for $i=1,\dots,K$,
\begin{equation}\label{eq:G_beta,i}
	G_{\b,i}(\mf u)=\norm{u_i}_i^2-\mu_i |u_i|_{Kq}^{Kq}-\b \left|\prod\nolimits_{j=1}^K u_j\right|_q^q,
\end{equation}
so that
\begin{equation*}
	\cM_\b^r=\left\{\mf u\in H^1_{\mathrm{rad}}(\R^d,\R^K) : \ u_i\neq 0, \ G_{\b,i}(\mf u)=0 \quad \forall i=1,\dots,K\right\}.
\end{equation*}
For $\mf u\in \cM_\b^r$ we compute
\begin{equation*}
	G_{\b,i}'(\mf u)[\bs \varphi]=2\langle u_{i},\varphi_i \rangle_i - Kq \mu_i\int_{\R^d} |u_{i}|^{Kq-2}u_{i}\varphi_i - q\b\sum_{j=1}^K \int_{\R^d} |u_{j}|^{q-2}u_{j}\varphi_j\prod_{k\neq j} |u_k|^q,
\end{equation*}
for every $\bs \varphi\in H^1_{\mathrm{rad}}(\R^d,\R^K)$. We introduce $\bs \varphi_{j}(\mf u)$ defined by
\begin{equation}\label{eq:varphi_j}
	(\bs \varphi_{j}(\mf u))_k=\begin{cases}
		u_{j} & k=j \\
		0 & k\neq j
	\end{cases}, \quad j,k=1,\dots,K,
\end{equation}
and we denote by $\bs M_{\b}(\mf u)\in \R^{K \times K}$ the matrix
\begin{equation}\label{eq:definition matrix M}
	(\bs M_\b(\mf u))_{ij} := (G'_{\b,i}(\mf u)[\bs \varphi_j(\mf u)])=\begin{cases}
		(2-Kq)\mu_i \left|u_{i}\right|_{Kq}^{Kq} + (2-q) \b\left|\prod\nolimits_k u_{k}\right|_q^q & i = j \\
		-q\b\left|\prod\nolimits_k u_{k}\right|_q^q & i \neq j
	\end{cases},
\end{equation}
where in the last equality we used that $G_{\b,i}(\mf u)=0$ for all $i=1,\dots,K$.

\medskip 

The next lemma tells that, under suitable assumptions on $q$, $\b$ and the energy of $\mf u$,  $\bs M_\b(\mf u)$ is negative definite.
\begin{lemma}\label{lemma: M_b(u) definita positiva}
	Let $\mf u\in \cM_\b^r$ and assume that either
	\begin{enumerate}
		\item $\b<0$, or \label{case 1}
		\item $0<\b<\ubar\beta$, $q\geq 2$ and $I_\b(\mf u)\leq 2\bar C$, \label{case 2}
	\end{enumerate}
	where 
	\begin{equation}\label{eq:definizione ubar beta}
		\ubar\b=\frac{\bar S(Kq-2)}{2q(K-1)}\left(\prod_{i=1}^K \mu_i^{\frac{1}{K}}\right) \left( \frac{4Kq}{Kq-2} \frac{\bar{C}}{\bar S} \right)^{\frac{2-Kq}{2}}
	\end{equation}
	and $\bar C$ is defined in \eqref{eq:definizione Cbar}.
	Then, $\bs M_\b(\mf u)$ is strictly negative definite.
\end{lemma}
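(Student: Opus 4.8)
The plan is to prove that $-\bs M_\b(\mf u)$, which is symmetric, is strictly positive definite by showing it is strictly diagonally dominant with positive diagonal entries (Gershgorin). Writing $P:=\left|\prod_{j=1}^K u_j\right|_q^q\ge 0$, every off-diagonal entry of $\bs M_\b(\mf u)$ equals $-q\b P$, so the off-diagonal absolute row sums are all $(K-1)q|\b|P$. The crucial first step is to re-express the diagonal entries through the defining constraint $G_{\b,i}(\mf u)=0$, namely $\norm{u_i}_i^2=\mu_i|u_i|_{Kq}^{Kq}+\b P$. Substituting $\mu_i|u_i|_{Kq}^{Kq}=\norm{u_i}_i^2-\b P$ into $(\bs M_\b(\mf u))_{ii}=(2-Kq)\mu_i|u_i|_{Kq}^{Kq}+(2-q)\b P$ gives, after simplification,
\[
-(\bs M_\b(\mf u))_{ii}=(Kq-2)\norm{u_i}_i^2-q(K-1)\b P .
\]

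With this identity, the Gershgorin defect of the $i$-th row of $-\bs M_\b(\mf u)$ equals $(Kq-2)\norm{u_i}_i^2-q(K-1)\b P-(K-1)q|\b|P$. In case \ref{case 1} ($\b<0$) one has $|\b|=-\b$, so the two $P$-terms cancel and the defect reduces to $(Kq-2)\norm{u_i}_i^2>0$ (recall $Kq>2$ and $u_i\neq 0$); hence diagonal dominance holds for every $\b<0$, with no smallness assumption. In case \ref{case 2} ($\b>0$) the defect equals $(Kq-2)\norm{u_i}_i^2-2q(K-1)\b P$, so it suffices to establish the scalar inequality
\[
2q(K-1)\,\b\,P<(Kq-2)\norm{u_i}_i^2\qquad\text{for every }i .
\]

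To verify this, I would bound $P$ from above via the generalized Hölder inequality and the Sobolev-type estimate $\norm{u_j}_j^2\ge\bar S\,(\mu_j|u_j|_{Kq}^{Kq})^{2/Kq}$, which together give $P\le\left(\prod_j\mu_j\right)^{-1/K}\bar S^{-Kq/2}\prod_j\norm{u_j}_j^q$. From $I_\b(\mf u)=\left(\tfrac12-\tfrac1{Kq}\right)\sum_j\norm{u_j}_j^2\le 2\bar C$ I get $\sum_j\norm{u_j}_j^2\le \tfrac{4Kq\bar C}{Kq-2}=:T$, whence $\norm{u_j}_j^2\le T$ for each $j$. Splitting $\prod_j\norm{u_j}_j^q=\norm{u_i}_i^{q-2}\norm{u_i}_i^2\prod_{j\neq i}\norm{u_j}_j^q$ and using $q\ge 2$ to estimate $\norm{u_i}_i^{q-2}\le T^{(q-2)/2}$ and $\prod_{j\neq i}\norm{u_j}_j^q\le T^{(K-1)q/2}$ (the exponents adding up to $\tfrac{Kq-2}{2}$), I obtain
\[
\frac{P}{\norm{u_i}_i^2}\le\left(\prod_j\mu_j\right)^{-1/K}\bar S^{-Kq/2}\,T^{(Kq-2)/2}.
\]
Inserting this into the target inequality, one checks that $0<\b<\ubar\b$ with $\ubar\b$ as in \eqref{eq:definizione ubar beta} is precisely the condition that forces $2q(K-1)\b P<(Kq-2)\norm{u_i}_i^2$, which completes the argument.

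I expect the main obstacle to be the chain of estimates in case \ref{case 2}: the powers of $T$, $\bar S$ and $\prod_j\mu_j$ must combine exactly to reproduce the explicit threshold $\ubar\b$, and the hypothesis $q\ge 2$ is essential precisely when bounding $\norm{u_i}_i^{q-2}$ from above by $T^{(q-2)/2}$ (for $1\le q<2$ this exponent is negative and one would need a uniform lower bound on $\norm{u_i}_i$, which is the difficulty mentioned after \Cref{thm:beta positivo}). The conceptual heart of the proof, however, is the reformulation of the diagonal entries via $G_{\b,i}(\mf u)=0$: it is this substitution that produces the factor $q(K-1)$, makes the $P$-terms cancel for $\b<0$, and isolates the single inequality to be checked in the cooperative regime.
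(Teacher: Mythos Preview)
Your proposal is correct and follows essentially the same route as the paper: Gershgorin diagonal dominance for $-\bs M_\b(\mf u)$, combined in case~\ref{case 2} with the H\"older/Sobolev bound on $P$ and the energy bound $\sum_j\norm{u_j}_j^2\le \tfrac{4Kq\bar C}{Kq-2}$ to recover exactly the threshold $\ubar\b$. The one cosmetic difference is that you substitute $\mu_i|u_i|_{Kq}^{Kq}=\norm{u_i}_i^2-\b P$ into the diagonal entry \emph{before} splitting cases, which makes the cancellation in case~\ref{case 1} immediate and reduces case~\ref{case 2} directly to the single inequality $2q(K-1)\b P<(Kq-2)\norm{u_i}_i^2$; the paper keeps the diagonal in its original form and arrives at the equivalent inequality $(Kq-2)\mu_i|u_i|_{Kq}^{Kq}>((K-2)q+2)\b P$ after a short manipulation, but the two computations are algebraically identical.
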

\begin{proof}
	We prove that $-\bs M_\b(\mf u)$ is strictly diagonally dominant with positive diagonal entries: the thesis then follows from Gershgorin's circle theorem. We split the proof into the two cases.
	
	\textit{Case \eqref{case 1}:} 
	Since $\mf u\in \cM_\b^r$,
	\begin{equation}\label{eq:dis}
		\mu_i|u_i|_{Kq}^{Kq} = \norm{u_i}_i^2 -\beta |\prod\nolimits_{j} u_j|_q^q > |\b||\prod\nolimits_{j} u_j|_q^q,
	\end{equation}
	whence
	\begin{equation*}
	\begin{split}
		(Kq-2)\mu_i \left|u_{i}\right|_{Kq}^{Kq} + (q-2)\b |\prod\nolimits_j u_{j}|_q^q &=(Kq-2)\mu_i \left|u_{i}\right|_{Kq}^{Kq} - (q-2)|\b| |\prod\nolimits_j u_{j}|_q^q \\
		& >(K-1)q|\b||\prod\nolimits_j u_{j}|_q^q.
		\end{split}
	\end{equation*}
	Since the right hand side is non-negative, we deduce exactly that $-\bs M_\b(\mf u)$ is strictly diagonally dominant with positive diagonal entries.
%
	
	\textit{Case \eqref{case 2}:} We have to prove that, for every \( i \),
	\begin{equation}\label{eq:claim}
		(Kq-2)\mu_i \left|u_{i}\right|_{Kq}^{Kq} + (q-2)\b |\prod\nolimits_k u_{k}|_q^q > (K-1)q\b |\prod\nolimits_k u_{k}|_q^q.
	\end{equation}
	If $\mf u\in \cM_\b^r$ and $I_\b(\mf{u}) \le 2 \bar C$, then
	\[
	\left|u_{i}\right|_{Kq,i}^q \leq \frac{\norm{u_i}_i^q}{\bar S^{\frac{q}{2}}} \leq \frac{\left( \sum_{j=1}^K \norm{u_j}_j^2 \right)^{\frac{q}{2}}}{\bar S^{\frac{q}{2}}} \leq \left( \frac{2Kq}{Kq-2} \frac{2\bar{C}}{\bar S} \right)^{\frac{q}{2}} \quad \forall i.
	\]
	Hence, by the Hölder inequality,
	\begin{align*}
		\b|\prod\nolimits_j u_{j}|_q^q & \leq \b \left(\prod_{i=1}^K \mu_i^{-\frac{1}{K}}\right) \left|u_{i}\right|_{Kq,i}^{q} \prod\nolimits_{j\neq i} \left|u_{j}\right|_{Kq,j}^q \\
		& \leq \b \left(\prod_{i=1}^K \mu_i^{-\frac{1}{K}}\right) \left|u_{i}\right|_{Kq,i}^{q-2} \frac{\norm{u_i}_i^2}{\bar S} \prod\nolimits_{j\neq i} \left|u_j\right|_{Kq,j}^{q} \\
		& \leq \frac{\b}{\bar S} \left(\prod_{i=1}^K \mu_i^{-\frac{1}{K}}\right) \left( \frac{2Kq}{Kq-2} \frac{2\bar{C}}{\bar S} \right)^{\frac{Kq-2}{2}}\norm{u_i}_i^2.
	\end{align*}
	By definition of \( \ubar \b \) and $\b<\ubar \b$, this yields
	\[
	\b |\prod\nolimits_j u_j|_{q}^{q} < \frac{\norm{u_i}_i^2}{\frac{(K-2)q+2}{Kq-2}+1} = \frac{\mu_i\left|u_i\right|_{Kq}^{Kq} + \b|\prod\nolimits_j u_j|_{q}^q}{\frac{(K-2)q+2}{Kq-2}+1},
	\]
	which is equivalent to
	\[
	(Kq-2)\mu_i\left|u_i\right|_{Kq}^{Kq} > ((K-2)q+2)\b|\prod\nolimits_j u_j|_{q}^q,
	\]
	namely \eqref{eq:claim}.
\end{proof}

An important consequence is the following:
\begin{lemma}\label{lemma:cM_b is a manifold}
	If $\b<\ubar{\b}$, where $\ubar \b$ is defined in \eqref{eq:definizione ubar beta},
	\begin{equation*}
		\cH=\cM_\b^r\cap \left\{\mf u:I_\b(\mf u) < 2\bar{C}\right\}
	\end{equation*}
	is a smooth manifold of codimension $K$.
\end{lemma}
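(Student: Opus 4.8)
The plan is to show that $\cH$ is a smooth manifold of codimension $K$ by applying the regular value theorem to the map $\mf G_\b = (G_{\b,1},\dots,G_{\b,K}) \colon H^1_{\mathrm{rad}}(\R^d,\R^K) \to \R^K$, where the $G_{\b,i}$ are defined in \eqref{eq:G_beta,i}. By construction $\cM_\b^r = \mf G_\b^{-1}(\mf 0) \cap \{u_i \neq 0 \ \forall i\}$, and $\cH$ is the intersection of this set with the open set $\{I_\b < 2\bar C\}$. Since each $G_{\b,i}$ is of class $C^1$ (here the hypothesis $q \ge 2$, or more precisely $q\ge 1$ with the superlinearity $Kq>2$, guarantees the required smoothness of the nonlinear terms), it suffices to prove that $\mf 0$ is a regular value of $\mf G_\b$ on $\cH$, i.e.\ that for every $\mf u \in \cH$ the differential $\mf G_\b'(\mf u) \colon H^1_{\mathrm{rad}}(\R^d,\R^K) \to \R^K$ is surjective.

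The key observation is that surjectivity of $\mf G_\b'(\mf u)$ follows from the invertibility of the matrix $\bs M_\b(\mf u)$ introduced in \eqref{eq:definition matrix M}. Indeed, first I would note that $\mf u \in \cH$ satisfies the hypotheses of \Cref{lemma: M_b(u) definita positiva}: either $\b < 0$ (case \eqref{case 1}) or $0 < \b < \ubar\b$ with $q \ge 2$ and $I_\b(\mf u) < 2\bar C \le 2\bar C$ (case \eqref{case 2}). Therefore $\bs M_\b(\mf u)$ is strictly negative definite, hence in particular invertible. Now recall that $(\bs M_\b(\mf u))_{ij} = G_{\b,i}'(\mf u)[\bs \varphi_j(\mf u)]$, where $\bs\varphi_j(\mf u)$ is the test direction defined in \eqref{eq:varphi_j}. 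Thus the $K \times K$ matrix whose $(i,j)$ entry is the $i$-th component of $\mf G_\b'(\mf u)[\bs \varphi_j(\mf u)]$ is precisely $\bs M_\b(\mf u)$. Given any target vector $\mf b = (b_1,\dots,b_K) \in \R^K$, one solves the linear system $\bs M_\b(\mf u) \bs c = \mf b$ for $\bs c \in \R^K$ and sets $\bs\varphi = \sum_{j=1}^K c_j \bs\varphi_j(\mf u)$; then $\mf G_\b'(\mf u)[\bs\varphi] = \mf b$, establishing surjectivity.

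Having verified that $\mf 0$ is a regular value of $\mf G_\b$ restricted to the open subset $\cH$, the regular value theorem (equivalently, the implicit function theorem applied to the splitting $H^1_{\mathrm{rad}}(\R^d,\R^K) = \ker \mf G_\b'(\mf u) \oplus \mathrm{span}\{\bs\varphi_1(\mf u),\dots,\bs\varphi_K(\mf u)\}$, which holds since the $\bs\varphi_j(\mf u)$ span a complement of the kernel thanks to the invertibility of $\bs M_\b(\mf u)$) shows that $\cH$ is a smooth submanifold whose codimension equals $K = \dim \R^K$. The conditions $u_i \neq 0$ and $I_\b(\mf u) < 2\bar C$ only serve to guarantee that $\cH$ is contained in an open set where the regularity and the invertibility of $\bs M_\b$ hold, so no boundary issues arise.

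The main obstacle is the verification of surjectivity, and this is exactly where \Cref{lemma: M_b(u) definita positiva} does the essential work: without the strict definiteness of $\bs M_\b(\mf u)$ one could not guarantee that the differential has full rank, and this is precisely why the energy threshold $I_\b(\mf u) < 2\bar C$ and the smallness condition $\b < \ubar\b$ (in the cooperative case) enter the statement. A minor technical point worth checking is that the finite-dimensional reduction is legitimate in the Hilbert-space setting, i.e.\ that $\mf G_\b'(\mf u)$ is a bounded linear operator onto $\R^K$ and that the chosen directions $\bs\varphi_j(\mf u)$ indeed lie in $H^1_{\mathrm{rad}}(\R^d,\R^K)$; both are immediate from the definitions, since $u_j \in H^1_{\mathrm{rad}}(\R^d)$ and the embedding $H^1_{\mathrm{rad}} \hookrightarrow L^{Kq}$ makes all the integral functionals in $G_{\b,i}$ well-defined and continuously differentiable.
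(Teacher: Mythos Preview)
Your proof is correct and follows essentially the same approach as the paper: both reduce the surjectivity of $\mf G_\b'(\mf u)$ to the invertibility of the matrix $\bs M_\b(\mf u)$, which is guaranteed by \Cref{lemma: M_b(u) definita positiva}. The only cosmetic difference is that the paper phrases the argument as linear independence of the vectors $\Phi(\mf u)[\bs\varphi_j(\mf u)]$ in $\R^K$, whereas you directly construct a preimage of an arbitrary $\mf b\in\R^K$; these are of course equivalent formulations of the same fact.
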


\begin{proof}
	Since 
	\begin{equation*}
		\cH=\left\{\mf u\in H^1_{\mathrm{rad}}(\R^d,\R^K) \setminus\{0\}: \ I_\b(\mf u)-2\bar C < 0, \ G_{\b,i}(\mf u)=0 \quad \forall i=1,\dots,K\right\},
	\end{equation*}
	where $G_{\b,i}$ is defined in \eqref{eq:G_beta,i}, it is enough to check that, for any $\mf u\in \cH$, the differential
	\begin{equation*}
		\Phi(\mf u)=\left(G_{\b,1}'(\mf u),\dots,G_{\b,K}'(\mf u)\right): H^1_{\mathrm{rad}}(\R^d,\R^K) \to \R^K 
	\end{equation*}
	is surjective. We use again the test functions $\bs \varphi_{j}(\mf u)\in H^1_{\mathrm{rad}}(\R^d,\R^K)$, defined in \eqref{eq:varphi_j}, and we show that
	\begin{equation*}
		\Phi(\mf u)[\bs\varphi_1(\mf u)], \, \dots, \,\Phi(\mf u)[\bs \varphi_K(\mf u)]
	\end{equation*}
	are linearly independent in $\R^K$. Indeed, let $t_1,\dots,t_K$ be real numbers such that
	\begin{equation*}
		\sum_{i=1}^K t_i \Phi(\mf u)[\bs \varphi_i(\mf u)]=0.
	\end{equation*}
	By \eqref{eq:definition matrix M}, this is equivalent to
	\begin{equation*}
		\sum_{i=1}^K t_i \left(M_{\b}(\mf u)\right)_{ji}=0 \quad \forall j=1,\dots,K.
	\end{equation*}
	By \Cref{lemma: M_b(u) definita positiva}, $M_{\b}(\mf u)$ is non-singular and hence $t_i=0$ for $i=1,\dots,K$.
\end{proof}

The next lemma tells that, under suitable assumptions on $q$ and $\b$, minimizing sequences cannot have vanishing components.

\begin{lemma}\label{lemma: succ minimizzanti controllate dal basso}
	Assume that one of the following holds:
	\begin{enumerate}[label=\alph*)]
		\item $q>2$ and $\b>0$, or \label{case a}
		\item $q=2$ and $0<\b< L$, where 
		\begin{equation}\label{eq:L}
			L= \left(\prod_{i=1}^K \mu_i^{\frac{1}{K}}\right) \frac{(K-1)^{K-1} S^K}{2^{K-1} K^{K-1}\bar{C}^{K-1}}, \quad \text{ or}
		\end{equation} \label{case b}
		\item $\b<0$. \label{case c}
	\end{enumerate} 
	Then, there exists $\delta=\delta(\bs{\lambda}, \bs{\mu}, \b, q,d)>0$ such that, for any sequence $(\mf u_n)\subset \cM_\b^r$ such that
	\begin{equation*}
		I_\b (\mf u_n)\to k_\b^r \text{ as } n\to\infty,
	\end{equation*}
	for some $n_0\in \N$ we have that
	\begin{equation*}
		|u_{i,n}|_{Kq,i}\geq \delta \quad \forall i=1,\dots,K, \quad \forall n\geq n_0. 
	\end{equation*}
\end{lemma}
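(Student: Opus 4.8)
The plan is to read the lower bound off directly from the defining relations of $\cM_\b^r$, combined with the Sobolev and Hölder inequalities, and then to split into the three cases according to the sign of $\b$ and the value of $q$. Throughout I write $a_{i,n}:=|u_{i,n}|_{Kq,i}$ and recall that on $\cM_\b^r$ the energy equals $(\frac12-\frac1{Kq})\sum_i\norm{u_{i,n}}_i^2$. Since $(\mf u_n)$ is minimizing and $I_\b(\mf u_n)\to k_\b^r\le\bar C$, for $n$ large the quantity $\sum_i\norm{u_{i,n}}_i^2$ is bounded by a constant depending only on $\bar C$; hence, using $\norm{u_{i,n}}_i^2\ge\bar S\,a_{i,n}^2$, all the $a_{i,n}$ are bounded from above by some $C=C(\bar C,\bar S,K,q)$. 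The starting point in every case is the inequality obtained by combining the constraint $G_{\b,i}(\mf u_n)=0$ with the Sobolev inequality,
\[
\bar S\,a_{i,n}^2\le\norm{u_{i,n}}_i^2=a_{i,n}^{Kq}+\b\Big|\prod\nolimits_{j}u_{j,n}\Big|_q^q,
\]
together with the generalized Hölder bound $\big|\prod_j u_{j,n}\big|_q^q\le\big(\prod_j\mu_j^{-1/K}\big)\prod_j a_{j,n}^q$.

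For case \ref{case c} ($\b<0$) no minimality is even needed: the interaction term is non-positive, so the displayed inequality forces $\bar S\,a_{i,n}^2\le a_{i,n}^{Kq}$, that is $a_{i,n}\ge\bar S^{1/(Kq-2)}$ for every $i$ and in fact for every $\mf u\in\cM_\b^r$. For cases \ref{case a} and \ref{case b} ($\b>0$) I would isolate the factor $a_{i,n}^q$ in the Hölder bound and estimate the remaining factors by $C^{q(K-1)}$, obtaining
\[
\bar S\,a_{i,n}^2\le a_{i,n}^{Kq}+\b\Big(\prod\nolimits_j\mu_j^{-1/K}\Big)C^{q(K-1)}a_{i,n}^q .
\]
Dividing by $a_{i,n}^q>0$ gives $\bar S\,a_{i,n}^{2-q}\le a_{i,n}^{(K-1)q}+\b\big(\prod_j\mu_j^{-1/K}\big)C^{q(K-1)}$. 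When $q>2$ the left-hand side blows up as $a_{i,n}\to0$ while the right-hand side stays bounded, so no subsequence of components can vanish and an explicit $\delta>0$ follows. When $q=2$ the left-hand side is the constant $\bar S$ and the inequality reads $a_{i,n}^{2(K-1)}\ge\bar S-\b\big(\prod_j\mu_j^{-1/K}\big)C^{2(K-1)}$, so everything hinges on the right-hand side being strictly positive — which is precisely the role of the threshold $L$.

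The delicate point, and the only place where the precise value of the constant matters, is therefore the borderline case $q=2$. Here $C$ must be made explicit through the a priori estimate $k_\b^r\le\bar C$: from $\sum_i\norm{u_{i,n}}_i^2=\frac{2Kq}{Kq-2}I_\b(\mf u_n)$ with $q=2$ one gets, in the limit, $a_{i,n}^2\le\frac{2K}{(K-1)\bar S}\bar C$, so that $C^{2(K-1)}=\big(\frac{2K\bar C}{(K-1)\bar S}\big)^{K-1}$, and the condition $\bar S-\b\big(\prod_j\mu_j^{-1/K}\big)C^{2(K-1)}>0$ becomes exactly $\b<L$ with $L$ as in \eqref{eq:L}. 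Taking $\delta$ to be the minimum of the three resulting lower bounds, and $n_0$ large enough that the energy bound and $a_{i,n}\le C$ hold, completes the argument. I expect the main obstacle to be precisely this quantitative bookkeeping at $q=2$: one must track how the uniform upper bound on the components feeds back into the smallness condition on $\b$, whereas the strict inequality $q>2$ and the competitive case $\b<0$ are essentially immediate.
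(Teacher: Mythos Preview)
Your proposal is correct and follows essentially the same approach as the paper: start from $\bar S\,a_{i,n}^2\le\norm{u_{i,n}}_i^2=a_{i,n}^{Kq}+\b\big|\prod_j u_{j,n}\big|_q^q$, apply H\"older to the interaction term, and split into the three cases. The only cosmetic difference is that in case~\ref{case b} the paper makes the ``in the limit'' step explicit by introducing a parameter $\eps>0$ with $I_\b(\mf u_n)\le(1+\eps)k_\b^r\le(1+\eps)\bar C$ and then chooses $\eps$ small using the strict inequality $\b<L$, whereas you absorb this into the choice of $n_0$; the quantitative bookkeeping leading to the exact value of $L$ is identical.
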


\begin{proof}
	We divide the proof into the three cases.
	
	\textit{Case \ref{case a}:} since $I_\b(\mf u_n)\to k_\b^r$ and by Sobolev embedding, for $i=1,\dots,K$ and $n\geq n_0$ for some $n_0$ we have
	\begin{equation}\label{eq:controllo dall'altro norma Kq componenti}
		|u_{i,n}|_{Kq,i}^2\leq\frac{\norm{u_{i,n}}^2_i}{\bar S}\leq\frac{\sum_{j=1}^K\norm{u_{j,n}}_j^2}{\bar S}\leq\frac{2Kq}{Kq-2}\frac{2k_\b^r}{\bar S}\leq\frac{2Kq}{Kq-2}\frac{2\bar{C}}{\bar S}.
	\end{equation}
	Then
	\begin{align*}
		|u_{i,n}|_{Kq,i}^2 &\leq \frac{\|u_{i,n}\|_i^2}{\bar S} = \frac{|u_{i,n}|_{Kq,i}^{Kq} + \beta |\prod_{j=1}^K u_{j,n}|_q^q}{\bar S} \\
		&\leq \frac{|u_{i,n}|_{Kq,i}^{Kq}}{\bar S} + \frac{\beta}{\bar S} \left(\prod_{j=1}^K \mu_j^{-\frac{1}{K}}\right) |u_{i,n}|_{Kq,i}^q \prod_{j \neq i} |u_{j,n}|_{Kq,j}^q \\
		&\overset{\eqref{eq:controllo dall'altro norma Kq componenti}}{\leq}
		\frac{|u_{i,n}|_{Kq,i}^{Kq}}{\bar S} + \frac{\beta}{\bar S} \left(\prod_{j=1}^K \mu_j^{-\frac{1}{K}}\right) \left( \frac{2Kq}{Kq - 2} \cdot \frac{2\bar{C}}{\bar S} \right)^\frac{(K-1)q}{2} |u_{i,n}|_{Kq}^q.
	\end{align*}
	This yields
	\begin{equation*}
		1 \leq \frac{\mu_i}{\bar S} |u_{i,n}|_{Kq,i}^{Kq - 2} + \frac{\beta}{\bar S} \left(\prod_{j=1}^K \mu_j^{-\frac{1}{K}}\right) \left( \frac{2Kq}{Kq - 2} \cdot \frac{2\bar{C}}{\bar S} \right)^\frac{(K-1)q}{2} |u_{i,n}|_{Kq,i}^{q - 2},
	\end{equation*}
	which clearly implies the thesis for some $\delta>0$.
	
	\textit{Case \ref{case b}}: arguing similarly to \eqref{eq:controllo dall'altro norma Kq componenti}, for any $\eps>0$ there exists $n_0\in\N$ such that for $n\geq n_0$ and $i=1,\dots,K$
	\begin{equation}\label{eq:controllo dall'altro norma 2K componenti}
		|u_{i,n}|_{2K,i}^2\leq\frac{\norm{u_{i,n}}^2_i}{\bar S}\leq\frac{\sum_{j=1}^K\norm{u_{j,n}}_j^2}{\bar S}\leq \frac{2K}{K-1}\frac{k_\b^r(1+\eps)}{\bar S}\leq\frac{2K(1+\eps)\bar{C}}{(K-1)\bar S}.
	\end{equation}
	Then
	\begin{align*}
		|u_{i,n}|_{2K,i}^2 &\leq \frac{\|u_{i,n}\|_i^2}{\bar S} = \frac{|u_{i,n}|_{2K,i}^{2K} + \beta |\prod_{j=1}^K u_{j,n}|_2^2}{\bar S} \\
		&\leq \frac{|u_{i,n}|_{2K,i}^{2K}}{\bar S}  + \frac{\beta}{\bar S} \left(\prod_{j=1}^K \mu_j^{-\frac{1}{K}}\right) |u_{i,n}|_{2K,i}^2 \prod_{j \neq i} |u_{j,n}|_{2K,j}^2 \\
		&\overset{\eqref{eq:controllo dall'altro norma 2K componenti}}{\leq}
		\frac{|u_{i,n}|_{2K,i}^{2K}}{\bar S}  + \frac{\beta}{\bar S} \left(\prod_{j=1}^K \mu_j^{-\frac{1}{K}}\right) \left( \frac{2K(1+\eps)\bar{C}}{(K-1)\bar S} \right)^{K-1} |u_{i,n}|_{2K,i}^2.
	\end{align*}
	
	Thus, letting $\lambda=\frac{\b}{L}<1$,
	\begin{equation*}
		1\leq \frac{|u_{i,n}|_{2K,i}^{2K-2}}{\bar S}+\b\frac{2^{K-1}K^{K-1}(1+\eps)^{K-1}\bar{C}^{K-1}}{(K-1)^{K-1}\bar S^K}=\frac{|u_{i,n}|_{2K,i}^{2K-2}}{\bar S}+\lambda(1+\eps)^{K-1}.
	\end{equation*}
	Choosing $\eps>0$ small enough such that
	\begin{equation*}
		\lambda(1+\eps)^{K-1}=\frac{1+\lambda}{2},
	\end{equation*}
	this implies $|u_{i,n}|_{2K,i}\geq \delta$ for all $i$ and for $n\geq n_0$, for some $\delta>0$ independent of $n$ and $i$.
	
	\textit{Case \ref{case c}}: since $\mf u_n\in \cM_\b^r$,
	\begin{equation*}
		|u_{i,n}|_{Kq,i}^2 \leq \frac{\|u_{i,n}\|_i^2}{\bar S} = \frac{|u_{i,n}|_{Kq,i}^{Kq} + \beta |\prod_{j=1}^K u_{j,n}|_q^q}{\bar S}\leq \frac{|u_{i,n}|_{Kq,i}^{Kq}}{\bar S}
	\end{equation*}
	which clearly implies the thesis since $Kq>2$.
\end{proof}

Thanks to the last lemma, as long as we are able to extract a converging subsequence from a minimizing sequence, its limit will be fully non-trivial. To get a converging subsequence, we need to recover some compactness, which is represented by the Palais-Smale condition, whose validity is investigated in the following lemma. For the Palais-Smale condition for problems with a constraint, we refer to \cite[Chapter 4]{Kav} (see also Exercise 11 therein, which concerns precisely the situation of a constraint with higher codimension).

\begin{lemma}\label{lemma:PS holds}
	If $\b<\ubar \b$, $(PS)_{k_\b^r}$ for $I_\b|_{\cM_\b^r}$ holds.
\end{lemma}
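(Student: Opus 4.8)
The plan is to verify the Palais--Smale condition along the classical scheme for constrained functionals: from a $(PS)_{k_\b^r}$ sequence I would first establish boundedness, then show that the associated Lagrange multipliers vanish so that the sequence is asymptotically a \emph{free} Palais--Smale sequence for $I_\b$ on $H^1_{\mathrm{rad}}(\R^d,\R^K)$, and finally exploit the compactness of the radial Sobolev embedding to extract a strongly convergent subsequence. Concretely, let $(\mf u_n)\subset\cM_\b^r$ satisfy $I_\b(\mf u_n)\to k_\b^r$ and $\|(I_\b|_{\cM_\b^r})'(\mf u_n)\|\to 0$. Writing $I_\b|_{\cM_\b^r}(\mf u_n)=\left(\tfrac12-\tfrac1{Kq}\right)\sum_i\norm{u_{i,n}}_i^2$ as in \eqref{eq:Ibeta on nehari} gives at once a uniform bound on $\sum_i\norm{u_{i,n}}_i^2$, hence boundedness in $H^1_{\mathrm{rad}}(\R^d,\R^K)$. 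Moreover $I_\b(\mf u_n)\to k_\b^r\le\bar C<2\bar C$, so for $n$ large $\mf u_n$ lies in the smooth manifold $\cH$ of \Cref{lemma:cM_b is a manifold}, and the constrained criticality can be expressed through multipliers $\bs\sigma_n=(\sigma_n^1,\dots,\sigma_n^K)\in\R^K$ as
\[
I_\b'(\mf u_n)=\sum_{i=1}^K\sigma_n^i\,G_{\b,i}'(\mf u_n)+o(1)\quad\text{in }(H^1_{\mathrm{rad}})^*.
\]

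The heart of the proof is to show that $\bs\sigma_n\to 0$. Testing the identity above against the functions $\bs\varphi_j(\mf u_n)$ of \eqref{eq:varphi_j}, and using that $I_\b'(\mf u_n)[\bs\varphi_j(\mf u_n)]=G_{\b,j}(\mf u_n)=0$ since $\mf u_n\in\cM_\b^r$, one is led to the linear system $\bs M_\b(\mf u_n)\bs\sigma_n=o(1)$, where $\bs M_\b$ is the symmetric matrix \eqref{eq:definition matrix M}. Since $I_\b(\mf u_n)\le 2\bar C$ eventually, \Cref{lemma: M_b(u) definita positiva} makes $\bs M_\b(\mf u_n)$ negative definite; the delicate point is that this definiteness must be made \emph{uniform in $n$}. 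This is where \Cref{lemma: succ minimizzanti controllate dal basso} enters (a $(PS)_{k_\b^r}$ sequence is in particular a minimizing sequence for the energy, so its hypotheses are in force): the uniform lower bounds $|u_{i,n}|_{Kq,i}\ge\delta$, inserted into the diagonal--dominance estimate carried out in the proof of \Cref{lemma: M_b(u) definita positiva}, yield a Gershgorin gap bounded below by a fixed multiple of $\min_i\norm{u_{i,n}}_i^2\ge\bar S\delta^2$, so that the smallest eigenvalue of $-\bs M_\b(\mf u_n)$ stays away from zero uniformly. Pairing $\bs M_\b(\mf u_n)\bs\sigma_n=o(1)$ with $\bs\sigma_n$ then forces $\bs\sigma_n\to0$.

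Because the coefficients of $G_{\b,i}'(\mf u_n)$ are bounded (they are controlled by $\norm{\mf u_n}$), the vanishing of $\bs\sigma_n$ upgrades the displayed identity to $I_\b'(\mf u_n)\to0$ in $(H^1_{\mathrm{rad}})^*$, i.e.\ $(\mf u_n)$ is a genuine free Palais--Smale sequence. At this stage I would recover compactness: up to a subsequence $\mf u_n\weak\mf u$ in $H^1_{\mathrm{rad}}(\R^d,\R^K)$, and since $2<Kq<2^*$ the radial embedding $H^1_{\mathrm{rad}}(\R^d)\embed L^{Kq}(\R^d)$ is compact, so each component converges strongly in $L^{Kq}(\R^d)$ and both the pure power nonlinearities and the $K$-wise interaction terms pass to the limit (the product of $K$ factors in $L^{Kq}$ being controlled in $L^q$ by Hölder). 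Evaluating $\langle I_\b'(\mf u_n)-I_\b'(\mf u),\,\mf u_n-\mf u\rangle$, which tends to $0$ since $I_\b'(\mf u_n)\to0$ in norm and $\mf u_n-\mf u\weak0$, and noting that all nonlinear contributions vanish in the limit by the strong $L^{Kq}$ convergence, only the quadratic part $\sum_i\norm{u_{i,n}-u_i}_i^2$ survives; hence $\mf u_n\to\mf u$ strongly in $H^1_{\mathrm{rad}}(\R^d,\R^K)$.

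The main obstacle is precisely the uniform invertibility of $\bs M_\b(\mf u_n)$: the qualitative negative definiteness supplied by \Cref{lemma: M_b(u) definita positiva} is by itself insufficient to conclude that the multipliers vanish, and one must combine it \emph{quantitatively} with the component lower bounds of \Cref{lemma: succ minimizzanti controllate dal basso} to produce a uniform spectral gap. Once $\bs\sigma_n\to0$ is secured, the remainder is the now standard radial compactness argument, and the only place where the radial constraint is essential is the compact embedding used in the last step (its failure on the full space is what ultimately prevents minimization over $\cM_\b$ for $\b<0$).
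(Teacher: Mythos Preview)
Your proof is correct and follows essentially the same route as the paper's: boundedness from \eqref{eq:Ibeta on nehari}, testing the multiplier identity against $\bs\varphi_j(\mf u_n)$ to obtain $\bs M_\b(\mf u_n)\bs\sigma_n=o(1)$, and the standard radial-compactness argument on $\langle I_\b'(\mf u_n)-I_\b'(\mf u),\,\mf u_n-\mf u\rangle$ to upgrade weak to strong $H^1$ convergence. The only difference is that you are more explicit about the \emph{uniform} invertibility of $\bs M_\b(\mf u_n)$, invoking \Cref{lemma: succ minimizzanti controllate dal basso} to secure a uniform Gershgorin gap (indeed the row gap works out to a fixed multiple of $\|u_{i,n}\|_i^2\ge\bar S\delta^2$, exactly as you claim); the paper instead extracts $L^{Kq}$-convergence first and then cites only \Cref{lemma: M_b(u) definita positiva}, leaving this uniformity implicit. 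Your identification of this step as ``the main obstacle'' is accurate, and your treatment of it is the more careful one.
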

\begin{proof}
We wish to show that, for every sequence $\{(\mf u_n, \gamma_{n,1},\dots, \gamma_{n,K})\}\subset \cM_\b^r \times \R^K$ such that
	\begin{equation}\label{eq:palais smale condition on vector nehari}
		I_\b(\mf u_n)\to k_\b^r, \quad I_\b'(\mf u_n)=\sum_{i=1}^K \gamma_{n,i} G_{\b,i}'(\mf u_n)+o(1) \quad \text{as }n\to\infty,
	\end{equation}
there exists a converging subsequence in 	$\cM_\b^r \times \R^K$. Testing the second equation in \eqref{eq:palais smale condition on vector nehari} with $\bs \varphi_{i}(\mf u_n)$, where $\bs \varphi_i(\mf u_n)$ is defined as in \eqref{eq:varphi_j}, namely
	\begin{equation*}
		(\bs \varphi_{i}(\mf{u}_n))_j=\begin{cases}
			u_{i,n} & j=i \\
			0 & j\neq i
		\end{cases}, \quad \text{for } n\in \N,\, i=1,\dots,K,
	\end{equation*}
	we find
	\begin{equation*}
		\sum_{j=1}^K(\bs M_\b(\mf u_n))_{ji}\,\gamma_{n,j}=o(1) \quad \forall i=1,\dots,K.
	\end{equation*}
	as $n\to \infty.$
	Thanks to \eqref{eq:Ibeta on nehari} and the first equation in \eqref{eq:palais smale condition on vector nehari}, $(\mf u_n)_n$ is bounded in $\left(H^1_{\mathrm {rad}}(\R^d)\right)^K$.
	Then, up to subsequences, $\mf u_n \rightharpoonup \mf u$ weakly in $H^1_{\mathrm {rad}}(\R^d,\R^K)$ for some limit function $\mf u$, and $\mf u_{n}\to \mf u$ strongly in $L^{Kq}(\R^d, \R^K)$. Using the fact that $\mf u_n \in \cM_\b^r$ and that eventually $I_\b(\mf u_n)\leq 2\bar C$, by \Cref{lemma: M_b(u) definita positiva} we obtain that $\gamma_{n,i}\to 0$ as $n\to\infty$ for $i=1,\dots,K.$ Hence, recalling \eqref{eq:palais smale condition on vector nehari}, we obtain that
	\begin{equation}\label{eq:I'beta infinitesimal}
		I_\b' (\mf u_n)\to 0 \quad \text{as } n\to\infty.
	\end{equation} 
	As a consequence, $(\mf u_n)_n$ is a Palais-Smale sequence for the unconstrained functional $I_\b$. It is now standard to prove that $\mf u_n\to \mf u$ in $H^1_{\mathrm {rad}}(\R^d,\R^K)$. By weak convergence and \eqref{eq:I'beta infinitesimal},
	\begin{equation*}
		I_\b'(\mf u_n)[\mf u_n-\mf u]\to 0 \quad \text{and} \quad I'_\b(\mf u)[\mf u_n - \mf u]\to 0.
	\end{equation*}
	Then,
	\begin{multline*}
		o(1)=\left(I_\b'(\mf u_n)-I'_\b(\mf u)\right)[\mf u_n-\mf u]
		\\
		=\sum_{i=1}^K\norm{u_{i,n}-u_i}_i^2
		-\sum_{i=1}^K\mu_i\int_{\R^d} \left(\oddpower{u_{i,n}}{Kq-2}-\oddpower{u_i}{Kq-2}\right)\left(u_{i,n}-u_i\right)\\
		-\b\sum_{i=1}^K\int_{\R^d}\left(\oddpower{u_{i,n}}{q-2}\prod_{j\neq i}|u_{j,n}|^q-\oddpower{u_i}{q-2}\prod_{j\neq i}|u_j|^q\right)\left(u_{i,n}-u_i\right).
	\end{multline*}
	Since, by $L^{Kq}$-convergences, the last two terms tend to $0$, this implies the desired convergence. Then the equations defining $\cM_\b^r$ pass to the limit and we obtain $\mf u\in \cM_\b^r$, concluding the proof.
\end{proof}

The results established so far allow us to prove the existence of a minimizer for $I_\b$ on $\cM_\b^r$, as we will argue later. Before proving \Cref{thm:beta positivo}, the next proposition proves that minimizers solve \eqref{eq:general problem}. Notice that this is not trivial, since $\cM_\b^r$ is not the standard Nehari manifold of codimension $1$, which is well-known to be a natural constraint, but a manifold of codimension $3$.

\begin{proposition}\label{prop:minimizers are solutions}
	Let $\mf u_{\mathrm{min}} \in \cM_\b^r$ be a minimizer for $I_\b$, that is,
	\begin{equation*}
		I_\b(\mf u_{\mathrm{min}})=k_\b^r.
	\end{equation*}
	If $\b<\ubar\b$, where $\ubar\b$ is defined in \eqref{eq:definizione ubar beta}, then $\mf u_{\mathrm{min}}$ is a (weak) solution of \eqref{eq:general problem}.
\end{proposition}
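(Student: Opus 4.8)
The plan is to apply the Lagrange multiplier rule and then to show that all the multipliers vanish, so that the constrained minimizer is in fact a free critical point. First I would check that $\mf u_{\mathrm{min}}$ sits in the smooth part of the constraint: by \eqref{eq:controllo dall'alto k_b^r} and the positivity of $\bar C$ one has $I_\b(\mf u_{\mathrm{min}})=k_\b^r\le \bar C<2\bar C$, hence $\mf u_{\mathrm{min}}\in \cH=\cM_\b^r\cap\{I_\b<2\bar C\}$. Since $\b<\ubar\b$, \Cref{lemma:cM_b is a manifold} guarantees that $\cH$ is a smooth manifold of codimension $K$ near $\mf u_{\mathrm{min}}$, that is, the differential of the constraint map $\Phi=(G_{\b,1}',\dots,G_{\b,K}')$ is surjective there. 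The standard Lagrange multiplier theorem then provides $\gamma_1,\dots,\gamma_K\in\R$ with
\[
I_\b'(\mf u_{\mathrm{min}})=\sum_{i=1}^K \gamma_i\, G_{\b,i}'(\mf u_{\mathrm{min}}).
\]
Everything now reduces to proving that $\gamma_i=0$ for every $i$.

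Next I would test this identity against the functions $\bs \varphi_j(\mf u_{\mathrm{min}})$ from \eqref{eq:varphi_j}, for $j=1,\dots,K$. On the one hand, since $\mf u_{\mathrm{min}}\in \cM_\b^r$,
\[
I_\b'(\mf u_{\mathrm{min}})[\bs \varphi_j(\mf u_{\mathrm{min}})]=\partial_j I_\b(\mf u_{\mathrm{min}})[u_{\mathrm{min},j}]=G_{\b,j}(\mf u_{\mathrm{min}})=0.
\]
On the other hand, by the very definition \eqref{eq:definition matrix M} of the matrix $\bs M_\b$, one has $G_{\b,i}'(\mf u_{\mathrm{min}})[\bs \varphi_j(\mf u_{\mathrm{min}})]=(\bs M_\b(\mf u_{\mathrm{min}}))_{ij}$. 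Therefore the multipliers solve the homogeneous linear system
\[
\sum_{i=1}^K (\bs M_\b(\mf u_{\mathrm{min}}))_{ij}\,\gamma_i=0\qquad \text{for every } j=1,\dots,K.
\]

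To close the argument I would invoke \Cref{lemma: M_b(u) definita positiva}, whose hypotheses are exactly met in the regimes where this proposition is used: if $\b<0$ we are in its first case, while if $0<\b<\ubar\b$ (with $q\ge 2$) we are in its second case, because $I_\b(\mf u_{\mathrm{min}})=k_\b^r\le \bar C\le 2\bar C$. In either situation $\bs M_\b(\mf u_{\mathrm{min}})$ is strictly negative definite, hence nonsingular, so the linear system above forces $\gamma_i=0$ for all $i$. Consequently $I_\b'(\mf u_{\mathrm{min}})=0$, i.e.\ $\mf u_{\mathrm{min}}$ is a free critical point of $I_\b$ and therefore a weak solution of \eqref{eq:general problem}.

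The one genuinely delicate point, which distinguishes this from the usual Nehari-manifold argument, is the very applicability of the Lagrange multiplier rule: here the constraint has codimension $K$ rather than one, so one must know \emph{a priori} that $\cM_\b^r$ is a true manifold at $\mf u_{\mathrm{min}}$, equivalently that the $K$ differentials $G_{\b,i}'(\mf u_{\mathrm{min}})$ are linearly independent. This is precisely what \Cref{lemma:cM_b is a manifold} supplies, and the localization $I_\b(\mf u_{\mathrm{min}})<2\bar C$ is what makes that lemma applicable. Once this structural fact is in hand, the remainder is purely algebraic and rests entirely on the non-degeneracy of $\bs M_\b$ from \Cref{lemma: M_b(u) definita positiva}.
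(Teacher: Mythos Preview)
Your proof is correct and follows essentially the same approach as the paper's own proof: apply the Lagrange multiplier rule via \Cref{lemma:cM_b is a manifold}, test against the $\bs\varphi_j(\mf u_{\mathrm{min}})$ to obtain a homogeneous linear system with coefficient matrix $\bs M_\b(\mf u_{\mathrm{min}})$, and conclude $\gamma_i=0$ by the invertibility coming from \Cref{lemma: M_b(u) definita positiva}. Your write-up is in fact slightly more detailed than the paper's, spelling out explicitly why $\mf u_{\mathrm{min}}\in\cH$ and why the two alternatives of \Cref{lemma: M_b(u) definita positiva} are covered.
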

\begin{proof}	
	
	Since $\cM_\b^r\cap \left\{I_\b < 2\bar{C}\right\}$ is a smooth manifold of codimension $K$ by \Cref{lemma:cM_b is a manifold}, by the Lagrange multipliers rule there exist \( \gamma_1,\dots,\gamma_K \in \mathbb{R} \) such that 
	\[
	I_\b'(\mf u_{\mathrm{min}})-\sum_{i=1}^K\gamma_i G_{\b,i}'(\umin)=0.
	\]
We show that \( \gamma_i = 0 \) for \( i=1,\dots,K \). Testing with \( \bs \varphi_i(\mf u_{\mathrm{min}}) \), \( i=1,\dots,K \), defined as in \eqref{eq:varphi_j}, and using that \( \umin \in \cM_\b^r \), one finds that \( \bs \gamma = (\gamma_1, \dots, \gamma_K) \) solves \( \bs M_\b(\mf u_{\mathrm{min}}) \bs \gamma = \bs 0 \). Since \( \bs M_\b(\mf u_{\mathrm{min}}) \) is invertible by \Cref{lemma: M_b(u) definita positiva}, it follows that \( \gamma_i = 0 \) for \( i=1,\dots,K \).  
\end{proof}

\begin{proof}[Conclusion of the proof of \Cref{thm:beta positivo}]
	Notice that, evaluating $\ubar \b$ for $q=2$, one finds that $\ubar \b<L$ (recall that $\ubar{\b}$ and $L$ are defined in \eqref{eq:definizione ubar beta} and \eqref{eq:L}, respectively). Then, by hypothesis, either case \ref{case a} or \ref{case b} of \Cref{lemma: succ minimizzanti controllate dal basso} holds, and
	\begin{equation*}
		\cH=\cM_\b^r\cap \left\{I_\b \leq 2\bar{C}\right\}
	\end{equation*}
	is closed in $H^1_{\mathrm{rad}}(\R^d,\R^K)$. Indeed, the equations defining $\cM_\b^r$ clearly pass to the limit under strong convergence, and pointwise convergence preserves radial symmetry; moreover, \Cref{lemma: succ minimizzanti controllate dal basso} ensures that limits of sequences in $\cH$ cannot have a null component. By Ekeland's variational principle (and taking into account that any minimizing sequence in $\cH$ stays away from the relative boundary $\cM_\b^r \cap \{I_\b=2 \bar C\}$, since $k_\b^r \le \bar C<3/2 \bar C$) there exists a Palais-Smale sequence $(\mf u_n)_n$ for $I_\b|_{\cM_\b^r}$ at level $k_\b^r$. It is not restrictive to assume $\mf{u}_n\geq 0$ on $\R^d$ for all $n$, and \Cref{lemma:PS holds} ensures the existence of a non-negative minimizer $\umin$. At this point \Cref{prop:minimizers are solutions} implies that minimizers are solutions of \eqref{eq:general problem}, and the strict positivity follows from the strong maximum principle. 
	
It remains to show that the positive minimizer found by minimizing $I_\b|_{\cM_\b^r}$ is a least energy positive solution (without the radial constraint). This is a simple consequence of the fact that, by \cite{Busca2000}, any positive solution is radially symmetric, and hence stays in $\cM_\b^r$.
\end{proof}

\begin{proof}[Conclusion of the proof of \Cref{thm:esistenza radiale beta negativo}]
	Under the assumption of the theorem, \ref{case c} of \Cref{lemma: succ minimizzanti controllate dal basso} holds. Then, arguing exactly as above, we complete the proof.
\end{proof}

\subsection{Non-existence of least energy solutions in the competitive case}\label{sec:non existence non radial b<0}
The aim of this subsection is proving the non-existence result stated in \Cref{thm:non esistenza non radiale beta negativo}.

\begin{proof}[Proof of \Cref{thm:non esistenza non radiale beta negativo}]\label{sub: non-ex}
	We follow the argument used in \cite[Lemma 3]{Mandel2014cooperative}. Since $\b<0$, notice that for any $\mf u\in \cM_\b$
	\begin{equation*}
		\norm{u_i}_i^2=\mu_i|u_i|_{Kq}^{Kq}+\b |\prod_{j=1}^K u_j|_q^q\leq \mu_i |u_i|_{Kq}^{Kq}, \quad i=1,\dots,K.
	\end{equation*}
	Then
	\begin{align*}
		I_\b(\mf u)&=\frac{Kq-2}{2Kq}\sum_{i=1}^{K}\norm{u_i}_i^2 \geq \frac{Kq-2}{2Kq}\sum_{i=1}^K \norm{u_i}_i^2 \left(\frac{\norm{u_i}_i^2}{\mu_i|u_i|_{Kq}^{Kq}}\right)^\frac{2}{Kq-2} \\
		&=\frac{Kq-2}{2Kq}\sum_{i=1}^K\left(\frac{\norm{u_i}_i^{2}}{ |u_i|_{Kq,i}^{2}}\right)^{\frac{Kq}{Kq-2}} \geq \frac{Kq-2}{2Kq}\sum_{i=1}^K c_{Kq,\lambda_i,\mu_i}^{\frac{Kq}{Kq-2}} \\
		&=\sum_{i=1}^K I_\b\left(\mf w_i\right),
	\end{align*}
	where $c_{Kq,\lambda_i,\mu_i}$ is defined in \eqref{c_p} and $\mf w_i$ in \eqref{eq:w_i definition}.
	Moreover, notice that equality can occur only if it occurs in the two previous inequalities: in the case of equality, the first implies $|\prod_{i=1}^K u_i|_q^q=0$, while the second requires $u_i$ to be a multiple and a translation $w_i$ for each $i=1,\dots,K.$ The two conditions are clearly incompatible, hence
	\begin{equation*}
		I_\b(\mf u)>\sum_{i=1}^K  I_\b (\mf w_i) \quad \forall \mf u\in \cM_\b.
	\end{equation*}
	Passing to the infimum,
	\begin{equation*}
		\ell_\b=\inf_{\cM_\b} I_\b \geq \sum_{i=1}^K  I_\b (\mf w_i).
	\end{equation*}
	
	We now show the other inequality, concluding the proof. Let $\eta\in C_c^\infty(\R^n)$ be a cut-off function such that 
	\begin{equation*}
		\eta=\begin{cases}
			0 & |x|\geq 1 \\
			1 & |x|\leq \frac{1}{2},
		\end{cases}
	\end{equation*}
	and $\left\{e_i\right\}$ be the standard basis of $\R^d$.
	Define for $R>0$
	\begin{equation*}
		\tilde u_{R,i}(x)=\eta\left(\frac{x}{R}\right)w_i(x),
	\end{equation*}
	where $w_i=w_{Kq,\lambda_i,\mu_i}$ is the unique positive solution to \eqref{eq:schrodinger 1 componente}, and
	\begin{align*}
		u_{R,1}(x)&=\tilde{u}_{R,1}(x-2Re_1)\left(\frac{\norm{\tilde u_{R,1}}_1^2}{|\tilde u_{R,1}|_{Kq,1}^{Kq}}\right)^\frac{1}{Kq-2}, \\
		\qquad u_{R,i}(x)&=\tilde{u}_{R,i}(x)\left(\frac{\norm{\tilde u_{R,i}}_i^2}{|\tilde u_{R,i}|_{Kq,i}^{Kq}}\right)^\frac{1}{Kq-2}, \quad \forall i=2,\dots, K.
	\end{align*}
	In such a way clearly
	\begin{align*}
		\supp (u_{R,1})&\subset \overline{B_R(2Re_1)}, \\
		\supp (u_{R,i})&\subset \overline{B_R(0)} \quad \forall i=2,\dots,K.
	\end{align*}
	so that $\prod_{i=1}^K u_{R,i}=0$, and by construction
	\begin{equation*}
		\norm{u_{R,i}}_i^2=\frac{\norm{\tilde u_{R,i}}_i^\frac{2Kq}{Kq-2}}{|\tilde u_{R,i}|_{Kq,i}^\frac{2Kq}{Kq-2}}=|u_{R,i}|_{Kq,i}^{Kq} \quad i=1,\dots,K.
	\end{equation*}
	Then $\mf u_R=\left(u_{R,1},\dots,u_{R,K}\right)\in \cM_\b$ for any $R>0$. As a consequence, 
	\begin{equation*}
		\inf_{\cM_\b} I_\b\leq \lim_{R\to\infty}I_\b(\mf u_R)=\frac{Kq-2}{2Kq}\lim_{R\to\infty} \sum_{i=1}^{K}\norm{u_{R,i}}_i^2=\frac{Kq-2}{2Kq}\lim_{R\to\infty} \sum_{i=1}^K \left(\frac{\norm{\tilde u_{R,i}}_i^2}{|\tilde u_{R,i}|_{Kq,i}^2}\right)^\frac{Kq}{Kq-2}.
	\end{equation*}
	Notice that, as $R\to +\infty$,
	\begin{equation*}
		\norm{\tilde u_{R,i}}_i\to \norm{w_i}_i, \quad |\tilde u_{R,i}|_{Kq,i}\to |w_i|_{Kq,i}. 
	\end{equation*}
	Then
	\begin{equation*}
		\inf_{\cM_\b} I_\b\leq \frac{Kq-2}{2Kq}\sum_{i=1}^K \left(\frac{\norm{\tilde w_i}_i^2}{|\tilde w_i|_{Kq,i}^2}\right)^\frac{Kq}{Kq-2}=\sum_{i=1}^K  I_\b (\mf w_i).
		\qedhere
	\end{equation*}
\end{proof}

\section{Asymptotic behaviour under strong competition}\label{sec:proof asymptotic}
This section is devoted to the proof of \Cref{thm:asintotica beta -infinito} and \Cref{thm: asy beh}.
\begin{proof}[Proof of \Cref{thm:asintotica beta -infinito}]
	We use the method of \emph{fibering map}, 
	used in a similar situation in \cite{Mandel2014repulsive}. 
	\medskip
		
	\textit{Step 1: equivalent formulation of the minimization problem.} 
		
	For each $\mf u\in \HoneradK$ such that $\prod_{i=1}^K u_i \equiv 0$, define the map
	\begin{equation*}
		h_{\mf u}(\bs t):=I_{-\infty}(t_1 u_1, \dots, t_K u_K) = \frac{1}{2} \sum_{i=1}^K t_i^2 \|u_i\|_i^2 - \frac{1}{Kq} \sum_{i=1}^K t_i^{Kq} |u_i|_{Kq,i}^{Kq}, \quad \bs t \in \R_+^K.
	\end{equation*}
	It is straightforward to see that $h_{\mf u}$ has a unique critical point $\bs t(\mf u)$, corresponding to the global maximum, given by 
	\begin{equation*}
			t_i(\mf u)=\left(\frac{\norm{u_i}_i^2}{|u_i|_{Kq,i}^{Kq}}\right)^{\frac{1}{Kq-2}}, \quad k=1,\dots,K,
		\end{equation*}
	such that $\left(t_1(\mf u) u_1, \dots, t_K(\mf u) u_K\right)\in \cM_{-\infty}^r $.
	Then 
	\begin{equation}\label{var car inf}
	\begin{split}
		\inf_{\cM_{-\infty}^r} I_{-\infty}&=\inf \left\{\max_{\bs t \in \R_+^K} h_{\mf u}(\bs t):\mf u\in \HoneradK, \ \prod_{i=1}^K u_i \equiv 0 \right\} \\
		&= \inf \left\{\frac{Kq-2}{2Kq}\sum_{i=1}^K \left(\frac{\norm{u_i}_i}{|u_i|_{Kq,i}}\right)^{\frac{2Kq}{Kq-2}}: \mf u\in \HoneradK, \ \prod_{i=1}^K u_i \equiv 0\right\} \\
		&=\inf \left\{\frac{Kq-2}{2Kq} \bar{I}(\mf u): \mf u\in \HoneradK, \ \prod_{i=1}^K u_i \equiv 0 \right\},
	\end{split}
	\end{equation}
	where
	\begin{equation*}
		\bar{I}(\mf u):=\sum_{i=1}^K \left(\frac{\norm{u_i}_i}{|u_i|_{Kq,i}}\right)^{\frac{2Kq}{Kq-2}}.
	\end{equation*}
	Also, whenever the last infimum is achieved by some $\mf u\in \HoneradK$ such that $\prod_{i=1}^K u_i \equiv 0$, then $\left(t_1(\mf u) u_1, \dots, t_K(\mf u) u_K\right)\in \cM_{-\infty}^r$ achieves the first infimum.
	
	\medskip
	
	\textit{Step 2: existence of a minimizer.} 
	
	We now prove the existence of a minimizer for $\bar I$. Notice that
	\begin{equation}\label{eq:homog of deg 0 of Ibar}
			\bar I(\alpha_1 u_1, \dots, \alpha_K u_K)=\bar I(u_1, \dots, u_K) \quad \forall\, \alpha_i>0, \, i=1,\dots,K.
		\end{equation}
Then it is not restrictive to take a minimizing sequence $(\mf u_n)_n$ for $\bar I$ such that $u_{i,n}\geq 0$ and $|u_{i,n}|_{Kq,i}=1$ for all $i=1,\dots,K$ and $n$. Clearly $(\mf u_n)_n$ is also bounded in $\HoneradK$ and hence, up to subsequences, the sequence converges weakly in $\HoneradK$, strongly in $L^{Kq}(\R^d,\R^K)$ and pointwise a.e. to a non-trivial and non-negative radial function $\mf u$ such that $\prod_{i=1}^K u_i\equiv 0$. Moreover, since $\bar I$ is weakly lower semicontinuous, $\mf u$ minimizes $\bar I$, and hence a suitable multiple, say $\umin$, minimizes $I_{-\infty}$ on $\cM_{-\infty}^r$. 

\medskip

\textit{Step 3: a minimizer of $k_{-\infty}^r$ solves \eqref{eq:eq schrodinger limite all'infinito}} 

Observe that we can take $\umin$ to be continuous on $\R^d\setminus \left\{0\right\}$, so that
$\left\{x\in \R^d:\uminarg{i}(x)\neq 0\right\}$ is open in $\R^d$ for $i=1,\dots,K.$ By minimality of $\umin$ for $\bar I$ with respect to variations $\bs \varphi\in \HoneradK$ such that $\supp(\varphi_i)\subset \{\uminarg{i}\neq 0\}$ for $i=1,\dots,K$, we find that
	\begin{equation*}
		0=\left. \frac{d}{dt} \right|_{t=0} \bar I(\umin+t\bs \varphi)=\sum_{i=1}^{K}\left(\langle \uminarg{i},\varphi_i\rangle_i-\mu_i \int_{\R^d} \uminarg{i}^{Kq-1}\varphi_i\right).
	\end{equation*}
	By arbitrariness of $\bs \varphi$ and elliptic regularity, $\umin$ solves \eqref{eq:eq schrodinger limite all'infinito}.
	\medskip
	
	\textit{Step 4: convergence as $\beta \to -\infty$}
	
	Now, let $(\b_n)_{n\in \N}$ be a sequence such that $\b_n\to-\infty$ as $n\to\infty$, and $(\mf u_n)_n$ be a sequence of corresponding minimizers for $I_{\b_n}|_{\cM_{\b_n}^r}$, whose existence is granted by \Cref{thm:esistenza radiale beta negativo}. Notice that $\cM_\b^r\supset\cM_{-\infty}^r$ for any $\b\in \R$ and $I_\b|_{\cM_{-\infty}^r}=I_{-\infty}$; then for all $n\in\N$
	\begin{equation}\label{ord infty}
		0<I_{\b_n}(\mf u_n)=\min_{\cM_{\b_n}^r} I_{\b_n}\leq \min_{\cM_{-\infty}^r} I_{-\infty}.
	\end{equation}
	It follows that $(\mf u_n)_n$ is bounded in $\HoneradK$ and, up to subsequences, $\mf u_n\to \mf u$ strongly in $L^{Kq}(\R^d,\R^K)$, $\mf u_n\rightharpoonup \mf u$ weakly in $\HoneradK$, and $\mf u_n \to \mf u$ a.e. in $\R^d$ as $n\to\infty$, for some $\mf u\in \HoneradK$.
	Moreover,
	\begin{equation*}
		\bar S|u_{i,n}|_{Kq,i}^2\leq \norm{u_{i,n}}_i^2=|u_{i,n}|_{Kq,i}^{Kq}+\b_n\left|\prod\nolimits_j u_{j,n}\right|_q^q \leq |u_{i,n}|_{Kq,i}^{Kq},
	\end{equation*}
	whence we deduce that $u_i\neq 0$ for $i=1,\dots,K$. Passing to the limit in the last inequality, by weak convergence in $H^1$,
	\begin{equation}\label{eq:inequality for infty}
		\norm{u_i}_i^2\leq |u_i|_{Kq,i}^{Kq}.
	\end{equation} 
	
	Now
	\begin{equation*}
		|\prod_{i=1}^K u_i|_q^q=\lim_{n\to\infty} |\prod_{i=1}^K u_{i,n}|_q^q=\lim_{n\to\infty} |\b_n|^{-1}\left(\norm{u_{1,n}}_1^2-|u_{1,n}|_{Kq,1}^2\right)=0,
	\end{equation*}
	since the norms $L^{Kq}$ and $H^1$ are uniformly bounded. Then
	\begin{equation*}
		\prod_{i=1}^K u_i\equiv 0 \quad \text{in $\R^d$}.
	\end{equation*}
	In addition, 
	\begin{align*}
		\frac{Kq-2}{2Kq}\sum_{i=1}^{K}\norm{u_i}_i^2&\leq \frac{Kq-2}{2Kq}\liminf_{n\to\infty} \sum_{i=1}^K \norm{u_{i,n}}_i^2=\liminf_{n\to\infty} \min_{\cM_{\b_n}^r}I_{\b_n} \\
		&\leq\limsup_{n\to\infty}\min_{\cM_{\b_n}^r}I_{\b_n} \overset{\eqref{ord infty}}{\le}\min_{\cM_{-\infty}^r}I_{-\infty} \overset{\eqref{var car inf}}{\leq} \frac{Kq-2}{2Kq} \bar I(\mf u) \\
		& =\frac{Kq-2}{2Kq} \sum_{i=1}^K \left(\frac{\norm{u_i}_i^2}{|u_i|_{Kq,i}^2}\right)^{\frac{Kq}{Kq-2}} \overset{\eqref{eq:inequality for infty}}{\leq} \frac{Kq-2}{2Kq}\sum_{i=1}^{K}\norm{u_i}_i^2.
	\end{align*}
	Then each inequality is in fact an equality, whence it follows that
	\begin{equation*}
		\lim_{\b\to-\infty} k_\b^r=k_{-\infty}^r,
	\end{equation*}
	and 
	\begin{equation*}
		\norm{u_i}_i^2=|u_i|_{Kq,i}^{Kq} \qquad \forall i=1,\dots,K;
	\end{equation*}
	that is, $\mf u \in \cM_{-\infty}^r$ and $I_{-\infty} (\mf u)=k_{-\infty}^r$. In particular, from what we proved before, we deduce that $\mf u$ solves \eqref{eq:eq schrodinger limite all'infinito}.
	Moreover, by weak $H^1-$convergence together with convergence of the norms, we have that	$\mf u_n \to \mf u$ in $\HoneradK$, and furthermore
	\begin{equation*}
		0\leq \limsup_{n\to\infty} |\b_n| \left|\prod\nolimits_{i=1}^K u_{i,n}\right|_q^q=\limsup_{n\to\infty}\left(|u_{1,n}|_{Kq,i}^{Kq}-\norm{u_{1,n}}_1^2\right)=0,
	\end{equation*}
	which completes the proof.
	\end{proof}
	
Now we address the characterization of the minimizers for $k_{-\infty}^r$, Theorem \ref{thm: asy beh}. It is convenient to state a preliminary result on the least energy level of the scalar problem
\begin{equation}\label{sc pb dom}
\begin{cases} 
-\Delta v+\lambda v=\mu v^{p-1}, \quad v>0 & \text{ in $\Omega$} \\
v=0 & \text{on $\pa \Omega$},
\end{cases}
\end{equation}
where $\Omega \subset \R^d$ is a radial domain, $\lambda, \mu>0$, and $p \in (2,2^*)$. A least energy radial positive solution to \eqref{sc pb dom} can be obtained as a constrained minimizer of
\[
E_{\Omega}(v):=\int_{\Omega} \frac12(|\nabla v|^2+ \lambda v^2)-\frac{\mu}{p}|v|^p
\]
on the Nehari manifold $\cN_{\Omega}:= \{u \in H_{0, \mathrm{rad}}^1(\Omega) \setminus \{0\}: \ E_{\Omega}'(v)[v]=0\}$. We denote by 
\[
c_\Omega:= \inf_{\cN_{\Omega}} E_{\Omega}
\]
It is well known that such a least energy positive solution exists, and that any constrained minimizer is a least energy positive solution, up to the a multiplication by $-1$.

\begin{lemma}\label{lem: monot inclusion}
If $\Omega_1 \subset \Omega_2$, then $c_{\Omega_1} > c_{\Omega_2}$.
\end{lemma}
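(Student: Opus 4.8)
The plan is to combine the variational characterization of $c_\Omega$ with a strong maximum principle argument, obtaining the weak monotonicity from a trivial extension-by-zero and upgrading it to a strict inequality by contradiction. First I recall that, since every nonzero $v$ has a unique positive dilation onto the Nehari manifold, the level $c_\Omega$ admits the quotient form
\[
c_\Omega = \left(\frac12-\frac1p\right) S_\Omega^{\frac{p}{p-2}}, \qquad S_\Omega := \inf_{v \in H^1_{0,\mathrm{rad}}(\Omega)\setminus\{0\}} \frac{\int_\Omega |\nabla v|^2 + \lambda v^2}{\left(\int_\Omega \mu |v|^p\right)^{2/p}}.
\]
Since $p/(p-2)>0$, the map $S_\Omega \mapsto c_\Omega$ is strictly increasing, so it suffices to establish the corresponding strict monotonicity for $S_\Omega$.

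For the weak inequality I would use that extension by zero induces an inclusion $H^1_{0,\mathrm{rad}}(\Omega_1) \hookrightarrow H^1_{0,\mathrm{rad}}(\Omega_2)$ under which all three integrals appearing in the quotient are preserved. Hence the infimum defining $S_{\Omega_2}$ is taken over a strictly larger admissible set, which immediately gives $S_{\Omega_2} \le S_{\Omega_1}$ and therefore $c_{\Omega_2}\le c_{\Omega_1}$.

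To get strictness I argue by contradiction: assume $c_{\Omega_1}=c_{\Omega_2}$, equivalently $S_{\Omega_1}=S_{\Omega_2}$. Let $v_1$ be a least energy positive radial solution on $\Omega_1$ (its existence is recalled before the statement), and let $\tilde v_1\in H^1_{0,\mathrm{rad}}(\Omega_2)$ be its extension by zero. Under the equality hypothesis, $\tilde v_1$ realizes the infimum $S_{\Omega_2}$, so a suitable positive multiple $w$ of $\tilde v_1$ lies on $\cN_{\Omega_2}$ and attains $c_{\Omega_2}$; being a nonnegative constrained minimizer, $w$ is (up to the scaling) a nonnegative, nontrivial weak solution of $-\Delta w + \lambda w = \mu w^{p-1}$ in $\Omega_2$. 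On the other hand $w\equiv 0$ on the nonempty open set $\Omega_2\setminus\overline{\Omega_1}$. Since $-\Delta w + \lambda w = \mu w^{p-1}\ge 0$, the function $w\ge 0$ is a nonnegative supersolution of $-\Delta + \lambda$, and the strong maximum principle forces a nonnegative supersolution vanishing at an interior point of a connected component to vanish on the whole component. As $\Omega_2$ is a connected radial domain (and $\Omega_2\setminus\overline{\Omega_1}$ has nonempty interior), this yields $w\equiv 0$, a contradiction. Hence $c_{\Omega_1}>c_{\Omega_2}$.

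The delicate point is entirely the strict inequality, and it has two facets. The first is logical: one must justify that the equality of levels promotes $\tilde v_1$ from a mere competitor to an \emph{actual} minimizer on $\Omega_2$, hence—via the Lagrange multiplier rule on the codimension-one Nehari manifold—to a genuine solution of the Euler--Lagrange equation on the larger domain. The second is the analytic input: I rely on the strong maximum principle (equivalently, unique continuation) to exclude a nontrivial nonnegative solution that vanishes on an open subset of a connected domain. If one wishes to allow a disconnected radial $\Omega_2$, the same argument is run on the connected component meeting $\Omega_2\setminus\overline{\Omega_1}$, using that a least energy solution cannot be supported away from part of a component.
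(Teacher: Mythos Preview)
Your proof is correct and follows essentially the same approach as the paper's: extension by zero gives the weak inequality, and equality would force the extended minimizer to be a nonnegative solution on $\Omega_2$ that vanishes on an open subset, contradicting the strong maximum principle. The paper argues directly on the Nehari level rather than passing through the quotient $S_\Omega$, and simply invokes the fact (recalled just before the lemma) that any constrained minimizer is a positive solution, but the substance is identical.
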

\begin{proof}
It is plain that $c_{\Omega_1} \ge  c_{\Omega_2}$. Thus, we suppose by contradiction that $c_{\Omega_1} = c_{\Omega_2}$. Let $v_1$ be a least energy positive solution for $c_{\Omega_1}$. Since $\Omega_1 \subset \Omega_2$, by extending $v_1$ as $0$ outside of $\Omega_1$ (and denoting still by $v_1$ such an extension) we have that $v_1 \in \cN_{\Omega_2}$, and hence is also a minimizer for $c_{\Omega_2}$. But any minimizer is a least energy positive solution to \eqref{sc pb dom} in $\Omega_2$, and this contradicts the fact that $v_1=0$ in $\Omega_2 \setminus \Omega_1$.
\end{proof}
	
	\begin{proof}[Proof of \Cref{thm: asy beh}]
	Let $\mf{v}$ be a minimizer for $k_{-\infty}^r$. Throughout this proof, we will denote by $v_i$ both the function defined in $\R^d$, and its radial profile defined on $[0,+\infty)$. This should not be source of misunderstanding.
	
	\medskip
	
\textit{Step 1: each set $\{v_i\neq 0\}$ is connected, and hence is a ball, or an annulus, or the complement of a ball, or $\R^d$, and each component has constant sign} 

	Suppose by contradiction that $\{v_1\neq 0\}$ is disconnected, and let $\omega_1$ be one of its connected components. Clearly, 
	\[
	\int_{\{v_1\neq 0\} \setminus \omega_1} |\nabla v_1|^2+\lambda_1 v_1^2 >0,
	\]
	whence it follows that
	\[
	k_{-\infty}^r = I_{-\infty}(\mf{v}) = \sum_{i=1}^K \left(\frac12-\frac1{Kq}\right)\|v_i\|_i^2 > \sum_{i=2}^K \left(\frac12-\frac1{Kq}\right)\|v_i\|_i^2  + \int_{\omega_1} |\nabla v_1|^2+\lambda_1 v_1^2.
	\]
	The last term is precisely the energy $I_{-\infty}$ of the function $(v_1|_{\omega_1}, v_2,\dots,v_K)$, which belongs to $\cM_{-\infty}^r$ since $\mf{v}$ solves the limit problem \eqref{eq:eq schrodinger limite all'infinito}. Therefore, we find a contradiction with the minimality of $\mf{v}$.
	
	\medskip
	
\textit{Step 2: the set where two components vanish together has empty interior.}

From now on, by Step 1, up to changing sign in some components we may assume $v_i\geq 0$ in $\R^d$ for all $i=1,\dots,K$. In polar coordinates, suppose by contradiction that $\{v_1=0=v_2\}$ contains an annulus $r \in (a,b)$ (possibly unbounded, i.e. $b=+\infty$). Up to choosing a larger annulus, we can assume that either $a\in \partial\{v_1=0=v_2\}$ or $b\in \partial\{v_1=0=v_2\}$, with $a,b>0$. Suppose $a\in \partial\{v_1=0=v_2\}$ with $a >0$ (the case $b\in \partial\{v_1=0=v_2\}$ with $b>0$ is analogous): then, since $\{v_i>0\}$ is connected, there exists an index in $\{1,2\}$, say $1$, such that $\{v_1>0\}=(r_0,a)$ for some $r_0<a$. Let now $w_1$ be the least energy radial positive solution to
\[
\begin{cases} 
-\Delta v+\lambda_1 v=\mu_1 v^{Kq-1}, \quad v>0 & \text{in $B_{b} \setminus \overline{B_{r_0}}$} \\
v=0 & \text{on $\pa (B_{b} \setminus \overline{B_{r_0}})$}
\end{cases}
\]
By Lemma \ref{lem: monot inclusion}, we deduce that
\[
I_{-\infty}^r(w_1,v_2,\dots,v_K) < I_{-\infty}^r(\mf{v}),
\]
and moreover $(w_1,v_2,\dots,v_K) \in \cM_{-\infty}^r$ (since, even if $w_1>0$ on $(a,b)$, we still have $v_2=0$ there).
Therefore, we obtain a contradiction with the minimality of $\mf{v}$.

\medskip

\textit{Step 3: if $v_i(0)=0$, then $v_i  \equiv 0$ in a neighbourhood of $0$.}

If not, there exists $r_n \to 0^+$ such that $v_i(r_n) >0$. By Step 1, we deduce that there exist $\eps>0$ such that $v_i >0$ in $B_{\eps} \setminus \{0\}$ and $v_i=0$ on $\partial B_\eps$. Then $v_i\in H_0^1(B_\eps\setminus \{0\})$ is a weak solution to
\begin{equation*}
	\begin{cases}
		-\Delta v_i+\lambda_i v_i=\mu_i v_i^{Kq-1}, v_i>0 & \text{in } B_\eps\setminus\{0\} \\
		v_i=0 & \text{on $\partial (B_\eps\setminus\{0\})$}
	\end{cases}
\end{equation*}
Since $H_0^1(B_r \setminus \{0\}) = H_0^1(B_r)$ for every $r$, by the variational characterization of weak solutions and elliptic regularity, $v_i$ is in fact a classical solution to 
\begin{equation*}
	\begin{cases}
		-\Delta v_i+\lambda_i v_i=\mu_i v_i^{Kq-1} & \text{in } B_\eps \\
		v_i\geq0 & \text{in } B_\eps \\
		v_i=0 & \text{on $\partial B_\eps$}
	\end{cases}
\end{equation*}
By the strong maximum principle, since $v_i\neq 0$, we deduce that $v_i>0$ in $B_\eps$, a contradiction.

\medskip

\textit{Step 4: conclusion of the proof.}

By Step 3, we can divide the components of $\mf{v}$ in two non-empty classes: the ones positive at $0$, and the ones vanishing in a neighbourhood of $0$. For concreteness, let $v_1 \equiv 0$ in $B_\eps$ for some $\eps>0$. 
First, by step 2, we have that $v_j(0)>0$ for every $j \in \{2,\dots,K\}$. Moreover, since $v_1 \not \equiv 0$, in order to satisfy the partial segregation condition there exists another index, say $i=2$, such that $v_2$ vanishes somewhere. Let
\[
R:=\inf\left\{r>0: \ v_2(r) = 0\right\} <+\infty.
\]
We have $v_2>0$ in $B_R$ and $v_2=0$ on $\pa B_R$, and hence, by step 1, $v_2 \equiv 0$ in $\R^d \setminus B_R$. We claim that $v_1 \equiv 0$ in $\overline{B_R}$ and $v_1 >0$ in $\R^d \setminus B_R$. Indeed, notice first that, by Steps 1 and 2, $\{v_1>0\}=\R^d\setminus \overline{B_\rho}$ for some $\rho\leq R$. If $\rho<R$, there should exist another index, say $v_3$, such that $v_3(\rho)=0$. But then, by Step 1 again, $v_3 \equiv 0$ in $\R^d \setminus B_\rho$, and $v_2$ and $v_3$ vanish together on a set with non-empty interior. This is in contradiction with Step 2, and proves the previous claim.

At this point, to sum up, we have that $v_2>0$ in $B_R$ and $v_2 \equiv 0$ in $\R^d \setminus B_R$, while $v_1 \equiv 0$ in $\overline{B_R}$ and $v_1 >0$ in $\R^d \setminus \overline{B_R}$. Namely, $v_1$ and $v_2$ satisfy the full segregation condition $v_1 \, v_2 \equiv 0$ in $\R^d$. Therefore, $v_1 \, v_2 \cdots v_K \equiv 0$ in $\R^d$, and it is clear that each $v_j$ with $j=3,\dots,K$ has to minimize the energy
\[
\int_{\R^d} \frac12(|\nabla v|^2+ \lambda_j v^2)-\frac{\mu_j}{Kq}|v|^{Kq}
\]
under the constraints that $v \neq 0$ and $\|v\|_j^2 = |v|_{Kq,j}^{Kq}$; namely (up to a multiplication by $-1$) $v_j$ is a least energy radial positive solution to \eqref{sc pb leps 1}. Furthermore, $(v_1,v_2)$ is a minimizer for the full segregation problem
\[
\inf\left\{ \sum_{i=1}^2 \left(\frac{1}{2} \|u_i\|_i^2 - \frac{1}{Kq} |u_i|_{Kq,i}^{Kq}\right) \left| \begin{array}{l} (u_1,u_2) \in H^1_{\mathrm{rad}}(\R^d, \R^2), \\
u_1\,u_2 \equiv 0 \quad \text{in $\R^d$}\end{array}\right. \right\}.
\]
By well known results on this type of problems (for which we refer to \cite{CoTeVe, NoTaTeVe, TaTe, SoTaTeZi, Bartsch_Willem_1993}), we deduce that $v_1-v_2$ is a least energy radial sign-changing solution to \eqref{lescs}, and this completes the proof.
\end{proof}

\begin{remark}\label{rmk: on d=1}
As anticipated in the introduction, here we discuss the $1$-dimensional case.

All our results for $\beta>0$ can be extended straightforwardly to the case $d=1$. Clearly \Cref{thm:beta grande} holds without any change, whereas the proof \Cref{thm:beta=beta segnato} and \Cref{thm:beta positivo} need a slight adaptation in order to recover some compactness of bounded sequences: for $d \ge 2$, we used the compactness of the embedding $H^1_{\mathrm{rad}}(\R^d)\hookrightarrow L^p(\R^d)$, which fails when $d=1$. It is sufficient to observe that, even in the case $d=1$, any positive solution is radially decreasing, by the symmetry results in \cite{Busca2000,Ikoma_2009}. Indeed, one can check that the result originally stated for $d\geq 2$ in \cite[Theorem 1]{Busca2000} holds also in the 1-dimensional case for systems of the form \eqref{eq:general problem}, $K\geq 3$, as was already noted in \cite[Appendix]{Ikoma_2009} for the case $K=2$, $d=1$. Once the radial monotonicity is established, one can exploit the compactness of the embedding $H^1_{\mathrm{rad,decr}}(\R^d)\hookrightarrow L^p(\R^d)$, where $H^1_{\mathrm{rad,decr}}=\{u\in H^1_{\mathrm{rad}}(\R^d):u \text{ is radially decreasing}\}$, which holds for any $d\geq 1$ (see \cite[Appendix]{Berestycki_Lions_1983}). This way the minimization can be carried out on the smaller set
\begin{equation*}
	\cM_\b^{r,d}=\left\{\mf u\in H^1_{\mathrm{rad,decr}}(\R,\R^K): \ u_i\neq 0, \ \norm{u_i}_i^2=|u_i|_{Kq,i}^{Kq}+\b|\prod_{j=1}^K u_j|_q^q, \quad i=1,\dots,K \right\},
\end{equation*}
still proving the existence of \emph{least energy positive solutions} by what we just pointed out.

When $\b<0$, the 1-dimensional case shows instead, quite surprisingly, different behaviours with respect to the corresponding ones for $d\geq 2$. First of all, exploiting the characterization of $k_{-\infty}^r$ given in \eqref{var car inf}, one can prove, following the same exact argument as in \cite[Theorem 2 (i)]{Mandel2014repulsive}, that
\begin{equation*}
	k_{-\infty}^r=\inf_{\cM_{-\infty}^r}I_{-\infty}=\frac{Kq-2}{2Kq}\left(\sum_{i=1}^K c_{Kq,\lambda_i,\mu_i}^{\frac{Kq}{Kq-2}}+\bar S^{\frac{Kq}{Kq-2}}\right) \quad \text{and it is \emph{not} attained.}
\end{equation*}
We stress the contrast between this result and \Cref{thm:asintotica beta -infinito}. Moreover, it would be interesting to understand whether the other results contained in \cite[Theorem 2 (ii),(iii)]{Mandel2014repulsive} can be recovered in this context as well, that is, the existence of a $\bar\b<0$ such that:
\begin{itemize}
	\item If $\b<\bar\b$, then $k_\b^r=k_{-\infty}^r$ and $k_\b^r$ is \emph{not} attained;
	\item If $\bar\b<\b\leq 0$, then $k_\b^r<k_{-\infty}^r$ and $k_\b^r$ is achieved by a least energy fully non-trivial radial solution to \eqref{eq:general problem}, with all non-negative components.
\end{itemize}
We strongly believe that the existence of $\bar\b$ and the relations between $k_{\b}^r$ and $k_{-\infty}^r$ still hold, since the proof is based on a convenient characterization of $k_\b^r$ similar to \eqref{var car inf} which seems to hold for any number of components $K\geq 2$. As a matter of fact, if $K=3$ one can carry out explicit computations and check the validity of the result, suggesting in a natural way that this can be obtained for any $K$. Despite the intuition, we could not find a rigorous way to deal with an arbitrary number of components $K$ and we leave this as an open problem. For what concerns the existence of a minimizer for $k_\b^r$, some parts of the proof in \cite{Mandel2014repulsive} seem to rely on the binary interaction, hence it is not clear whether this is merely a technical difficulty that can be bypassed, or whether the $K$-wise interaction introduces a substantial difference, leading to a different scenario.
\end{remark}

\bibliographystyle{abbrv}
\bibliography{bibliography}

\end{document}